\newtheorem{theorem}{Theorem}[section]
\newtheorem{proposition}[theorem]{Proposition}
\newtheorem{lemma}[theorem]{Lemma}
\newtheorem{corollary}[theorem]{Corollary}
\newtheorem{remark}[theorem]{Remark}
\newcommand{\sezione}[1]{\section{#1}\setcounter{equation}{0}}
\newcommand{\nor}{\Arrowvert}
\def\R{{\rm I\mskip -3.5mu R}}
\def\N{{\rm I\mskip -3.5mu N}}
\def\cal{\mathcal}
\def\un{u_n}
\def\intO{\int_{\Om}}
\def\ln{\l_n}
\def\S{{\mathcal{S}}}
\def\tu{\tilde u}
\newcommand{\red}{\color{red}}
\def\e{{\varepsilon}}
\def\di12{\mathcal{D}^{1,2}(\R^n)}
\def\d{\delta}
\def\D{\Delta}
\def\l{{\lambda}}
\def\a{{\alpha}}
\def\0l{_{0,\l}}
\def\1l{_{1,\l}}
\def\2l{_{2,\l}}
\def\3l{_{3,\l}}
\def\4l{_{4,\l}}
\def\G{\Gamma}
\def\de{\partial}
\def\Om{\Omega}
\definecolor{purple}{rgb}{0.5, 0.0, 0.5}
\def\sideremark#1{\ifvmode\leavevmode\fi\vadjust{\vbox to0pt{\vss
 \hbox to 0pt{\hskip\hsize\hskip1em
 \vbox{\hsize2.1cm\tiny\raggedright\pretolerance10000
  \noindent #1\hfill}\hss}\vbox to15pt{\vfil}\vss}}}%
\begin{document}
\title[On the critical points ..]{On the critical points of solutions of PDE in a non-convex settings: the case of concentrating solutions}

\author[]{F. Gladiali, M. Grossi}

\address{Francesca Gladiali, Dipartimento SCFMN, Universit\`a  di Sassari  - Via Piandanna 4, 07100 Sassari, Italy}
\address{Massimo Grossi, Dipartimento di Scienze di Base Applicate per l’Ingegneria, Universit\`a degli Studi di Roma \emph{La Sapienza}, P.le Aldo Moro 5, 00185  Roma, Italy}

\thanks{2010 \textit{Mathematics Subject classification:35B09,35B40,35Q}  }

\thanks{\textit{Keywords}: Critical points, multipeaks solutions,index}

\thanks{}

\begin{abstract}
In this paper we are concerned with the number of critical points of solutions of nonlinear elliptic equations. We will deal with the case of non-convex, contractile and non-contractile planar domains.
We will prove results on the estimate of their number as well as their index. In some cases we will provide the exact calculation. The toy problem concerns the multi-peak solutions of the Gel'fand problem, namely
\begin{equation*}
            \left\{\begin{array}{lc}
                        -\Delta u=\l e^{u}  &
            \mbox{  in }\Om\\
              u=0 & \mbox{ on }\partial \Om,
                      \end{array}
                \right.
\end{equation*}
where $\Om\subset \R^2$ is a bounded smooth domain and $\l>0$ is a small parameter.
\end{abstract}

\maketitle

\section{Introduction}\label{s1}

The calculation of the number of critical points of solutions of nonlinear elliptic differential equations is an old and fascinating problem. Many questions are still largely unresolved and we are far from a complete description of the problem.
In this paper we will limit ourselves to considering $positive$ solutions of the following problem,
\begin{equation}\label{0}
\begin{cases}
-\Delta u=f(u)&in\ \Om\\
u=0&on\ \partial\Om,
\end{cases}
\end{equation}
where $\Om$ is a bounded smooth domain of $\R^N$, $N\ge2$ and $f$ is a Lipschitz nonlinearity.
In this setting the first result to mention is certainly that by Gidas, Ni and Nirenberg (see \cite{gnn}) where the authors prove, for convex and symmetric domains, the uniqueness of the critical point of positive solutions.

An important line of research was that of removing the $symmetry$ assumption and proving the  Gidas, Ni and Nirenberg theorem only assuming the convexity of $\Om$. The answer to this question is thought to be affirmative, indeed no counter-example has been provided.
However this seems to be a very difficult open problem. Indeed, if $\Om$ is a convex (not necessarily symmetric) domain, the uniqueness of the critical point of the solution has been proved as consequence of the strict convexity of the level sets of solutions  with appropriate nonlinearities  (see \cite{ml}, \cite{bl}, \cite{app}, \cite{k}, \cite{cf2}, \cite{gm}).
Observe that each functions  $f$ appearing in the mentioned results is handled differently, actually it is not possible to prove the strict convexity of the level sets of $u$ for generic $f$ (see the example in \cite{hns}).
A result for general nonlinearities $f$ for planar domain with positive curvature was proved by Cabrè and Chanillo, but here the solutions are required to be semi-stable, (see \cite{dgm} for an extension to $\Om$ with non-negative curvature). Unfortunately,
this assumption is not verified by mountain pass solutions as well as by many nonlinearities in \eqref{0}
like $f(s)=e^s$ or $f(u)=u^p$ with $p>1$.

The aim of this paper is to deduce some information on the critical points, like the exact number and their index, in planar domains which are not necessarily convex or simply connected.
Given the difficulty of the topic, we decided to choose a model problem in the plane and to consider the class of solutions concentrating at a finite number of points.

The class of solutions we consider are those that appear in Gelfand's problem, although we believe that these techniques can be adapted to deal with other cases, (such as for example with $f(u)=u^p$ with $p$ large).\\
The Gel'fand problem deals with  solutions to the problem,
\begin{equation}\label{1}
            \left\{\begin{array}{lc}
                        -\Delta u=\l e^{u}  &
            \mbox{  in }\Om\\
              u=0 & \mbox{ on }\partial \Om,
                      \end{array}
                \right.
\end{equation}
where $\Om\subset \R^2$ is a bounded smooth domain and $\l>0$ is a parameter.  This problem is associated
with several phenomena in differential geometry,  turbulence
theory,  statistical mechanics and gauge field theory (see the paper of Gel'fand,  \cite{ge9} or \cite{nasu}, \cite{su2} and the
references therein). Problem \eqref{1} has been derived in the context of statistical mechanics in \cite{clmp1} while the ties with the turbolence theory of an incompressible and homogeneus fluid, via the Euler equation is described in the introduction of \cite{dem}. 
This problem can be regarded as a simplified model to describe the steady states of reaction-diffusion equations where the diffusion is of exponential type.  
Reaction-diffusion equations arise in a wide variety of biological, physiological and chemical contests
such as the spreading of a chemical substance or the propagation of a disease, and in the study of complex systems.  One can see, as an example the books of Murray \cite{mu1, mu2} where spatial models for biomedical applications are considered.
In this contest the Laplace operator effectively describes the spatial variation of physiological quantities within a tissue or other (chemical) quantities of the model and the parameter $\lambda$ represents a scaling factor or an intensity parameter that can be related to the diffusion coefficient.  
The interpretation of the solution properties to stationary problems like \eqref{1}, in the context of physiological models,  requires careful evaluation of the involved processes and the integration with other clinical and experimental information.
Nevertheless properties of solutions to \eqref{1} can be of great interest for a deeper understanding of the system, its dynamics, and emergent phenomena. 
The existence and localization of critical points of solutions to reaction-diffusion equations can provide information about the concentration, diffusion, and interaction of substances that play a role.  They can be used to study wave propagation or the formation of spatial patterns.
It is a fundamental tool for the theoretical study and mathematical modeling of reaction and diffusion processes in various scientific and engineering contexts.

Finally, it is interesting to observe the connection between \eqref{1} with the Mean Field equation in a bounded domain of $\R^2$, namely
\begin{equation}\label{MF}
            \left\{\begin{array}{lc}
                        -\Delta v=\rho \frac{ h(x)e^{v}}{\int_{\Omega} h(x)e^{v}dx}   &
            \mbox{  in }\Om\\
              v=0 & \mbox{ on }\partial \Om.
                      \end{array}
                \right.
\end{equation}
By \cite{cli}, Section 6, the Mean Field equation \eqref{MF} with $h(x)=1$ in $\Omega$ is equivalent to \eqref{1} just letting $\lambda=\frac {\rho}{ \int_{\Omega} e^{v}dx} $.

\

Let us start by recalling some known facts about the structure of the solutions to  \eqref{1} (see {\cite{cr2}} and also \cite{cli} for a more detailed construction of the solutions). 

Let ${\cal X}=\{
(\lambda,u)\in\R^+\times C(\Omega) \mid (\ref{1})\hbox{ is
satisfied}\}$ be the solutions set to \eqref{1}. The first observation is that
${\cal X}=\emptyset$ for $\lambda$ large enough.  In particular there exists $\lambda^*=\lambda^*(\Omega)$ such that \eqref{1} admits a unique stable solution $\underline{u}_\lambda$ for every $\lambda \in [0,\lambda^*)$, called $minimal$ solution. Corresponding to $\lambda^*$ there exists a unique solution to \eqref{1}, and the solution curve bends back, so that there exists at least $two$ solutions of \eqref{1} when $\lambda\in (\lambda^*-\delta, \lambda^*)$ if $\delta $ is small enough.  This curve of solutions is sufficiently smooth for every domain $\Omega$. Further for every $\lambda\in (0,\lambda^*)$ there exists at least another non stable solution $u_\lambda$. 
Critical phenomena in fact occur to these (non stable) solutions
$u=u_\l(x)$ to \eqref{1} as $\lambda\downarrow 0$. The first observation is that $\|u_\lambda\|_{\infty}\to \infty$ as $\lambda\to 0$ by \cite{bremer}. 
This profile is described by
\cite{nasu} as a quantized blow-up mechanism, and we recall it in details in Section \ref{se:2}, see \eqref{2}.
Moreover sequences of blowing-up solutions $u_{\lambda_n}$ can be construct when $\lambda_n\to 0$ according to the topological
and geometrical properties of the domain $\Omega$. In particular in \cite{we} the author constuct a sequence of solutions on simply connected domains $\Omega$ blowing-up at a critical point $x_0\in \Omega$ of the {\em Robin} function $\mathcal R(x)$ of $\Omega$,  see also \cite{mo}.  Non simply connected domains are considered in \cite{djlw}.
 
A first complete existence result, for multipeak solutions, was proved in \cite{bapa} where the authors construct a sequence of solutions that blows-up at $m$ points $\{P_1,\dots, P_m\}$ of $\Omega$ when the point $(P_1, \dots, P_m)$ is a nondegenerate critical point for the {\em Kirchhoff-Routh} function of $\Omega$, that we denote  $\mathcal{KR}_\Omega(x_1,\dots,x_m)$ (see Section \ref{se:2} for its definition).

Observe that these solutions are $never$ semi-stable, so even if $\Om$ is strictly convex, the Cabrè-Chanillo result is not applicable.

 In \cite{egp} and \cite{dkm} this condition was relaxed  requiring that $(P_1, \dots, P_m)$ is a $stable$ critical point of $\mathcal{KR}_\Omega $.\\

Moreover the non-degeneracy of
$(P_1, \dots, P_m)$ as critical point of $\mathcal{KR}_\Omega$ implies the nondegeneracy of $u_\lambda$ for
$\lambda$ small enough.  This was
proven first for $m=1$ by \cite{gg1} and then by \cite{grohsu} in the general case.  

Let $B_\rho(Q)\subset\R^2$ be the ball centered at $Q$ and radius $\rho$.
Our first result is the following,
\begin{theorem}\label{prop-general}
Let $\Omega$ be a smooth domain with $k\geq 0$ holes and let  $u_\l$ be a family of solutions to \eqref{1} that blow-up at $m\geq 1$ points $\{P_1,\dots,P_m\}$ as $\l\to 0$.   Then, when $\l$ is small enough
\begin{equation}\label{nc}
\sharp\{\hbox{critical point of $u_\l$ in $\Om$}\}\ge2m+k-1.
\end{equation}
More precisely we have that, for $\l$ small, there exists exactly one critical point (a non-generate maximum) for $u_\l$ in $B_\rho(P_i)$ $i=1,..,m$ and $\rho$ small. Next, denoting by $D=\Om\setminus\cup_{i=1}^m B_\rho(P_i)$ and $\mathcal C_\l$ the set of critical points of $u_\l$ in $D$ we have that $u_\l$ admit at least $m+k-1$ nondegenerate saddle points in $D$ and
\begin{equation}\label{nb}
\sum_{z_j\in \mathcal C_\l} index_{z_j} (\nabla u_\l)=1-k-m.
\end{equation}
Moreover, for any $k,m\ge1$, there exists $\tilde\Om$ and a corresponding family of solutions $u_\l$ that blow-up at $m\geq 1$ points $\{P_1,\dots,P_m\}$ such that, again for $\l$ small enough,
\begin{equation}\label{nc2}
\sharp\{\hbox{critical point of $u_\l$ in $\tilde\Om$}\}=2m+k-1.
\end{equation}
Moreover all critical points of $u_\l$ are nondegenerate, $m$ of them are local maxima and  $m+k-1$ saddle points.
\end{theorem}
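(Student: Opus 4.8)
The plan is to combine a precise asymptotic description of $u_\l$ near and away from the blow-up points with a Morse-theoretic / Poincaré–Hopf counting argument. First I would recall the standard construction (from \cite{bapa}, \cite{egp}, \cite{grohsu}) expressing $u_\l$ as a sum of "bubbles" $\sum_{i=1}^m U_{\delta_i,P_i} + $ regular part, where $U_{\delta,P}$ is the Liouville bubble centered at $P$. Near each $P_i$, after rescaling by the concentration parameter $\delta_i\to 0$, $u_\l$ converges in $C^1_{loc}$ to the standard radial solution of $-\Delta U = e^U$ on $\R^2$, whose only critical point is a nondegenerate maximum at the origin. A quantitative version of this convergence, together with the nondegeneracy of $u_\l$ coming from the nondegeneracy of the critical point of $\mathcal{KR}_\Omega$ (via \cite{gg1}, \cite{grohsu}), gives that for $\rho$ small and $\l$ small $u_\l$ has exactly one critical point in each $B_\rho(P_i)$ and it is a nondegenerate local maximum. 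This yields the "local" part of the statement.

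Next I would handle the region $D=\Om\setminus\cup_{i=1}^m B_\rho(P_i)$. The key point is that on $D$ the function $u_\l$ is small (of order $\l$-dependent smallness after normalization) and, suitably renormalized, converges to a harmonic-type function — more precisely $u_\l / (\text{appropriate factor})$ converges uniformly on $D$ to $8\pi\sum_i G(\cdot,P_i)$ where $G$ is the Green function of $\Om$. Since this limit is harmonic on $D$ and the convergence is $C^1$ on compact subsets of $D$, $u_\l$ has no critical points near $\partial\Om$ (here one uses Hopf's lemma / the boundary point lemma: $|\nabla u_\l|>0$ on $\partial\Om$, so all critical points lie in a compact subset of the interior of $D$). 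Then I invoke the Poincaré–Hopf theorem for the gradient vector field $\nabla u_\l$ on the manifold-with-boundary $\overline{D}$. On $\partial B_\rho(P_i)$ the vector field points inward (toward the interior maximum), and on $\partial\Om$ it points outward (since $u_\l>0$ inside, $u_\l=0$ on $\partial\Om$, so $\nabla u_\l$ points into $\Om$, i.e.\ out of $D$). The Euler characteristic of $\Om$ with $k$ holes minus $m$ disks is $\chi(\overline D) = (1-k) - m$. A careful bookkeeping of the boundary contributions in the Poincaré–Hopf index formula (the inward parts of $\partial B_\rho(P_i)$ each contribute, and the outward part $\partial\Om$ contributes nothing extra) gives exactly
\[
\sum_{z_j\in \mathcal C_\l} \mathrm{index}_{z_j}(\nabla u_\l) = 1 - k - m,
\]
which is \eqref{nb}. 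Since each critical point of a function has index $\le 1$, and saddles have index $-1$ while maxima/minima have index $+1$, this forces $\mathcal C_\l$ to contain at least $m+k-1$ points of index $-1$ (if all were generic saddles); more carefully, the number of index-$(-1)$ points is at least $(k+m-1)$ because the sum of indices of the remaining points is at most their cardinality, and one needs the negative contributions to balance. Combined with the $m$ maxima in the balls, this yields the lower bound $\sharp\{\text{critical points}\}\ge 2m+k-1$, i.e.\ \eqref{nc}. A technical subtlety I expect to be a minor obstacle here is ruling out degenerate saddles or critical points of index $\le -2$ using only the information available; I would argue that generically (and, using nondegeneracy of $u_\l$ in the relevant regime, always for $\l$ small) the extra critical points in $D$ are nondegenerate, hence have index $\pm 1$; since index $+1$ points in $D$ would be spurious interior minima or maxima forbidden by the structure of $u_\l$ on $D$ (it is essentially a positive harmonic-like function with no interior extrema), all of $\mathcal C_\l$ consists of index $-1$ saddles, exactly $m+k-1$ of them.

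Finally, for the sharpness statement I would construct an explicit family of domains $\tilde\Om$ with $k$ holes realizing equality. The idea is to take a "simple" configuration: e.g.\ for $k$ holes, arrange $\tilde\Om$ as a large disk with $k$ small well-separated disks removed, symmetric enough that the Green/Robin function has a nondegenerate critical configuration admitting an $m$-point blow-up with $(P_1,\dots,P_m)$ a nondegenerate critical point of $\mathcal{KR}_{\tilde\Om}$. Then I would argue, using the $C^1$ convergence of the renormalized $u_\l$ to $8\pi\sum_i G(\cdot,P_i)$ on $D$ and a count of the critical points of this explicit harmonic-type limit (which, for a well-chosen symmetric domain, has exactly $m+k-1$ nondegenerate saddles and no other critical points in $D$), that $u_\l$ inherits exactly this critical set by a degree/implicit-function argument — each nondegenerate critical point of the limit perturbs to a unique nondegenerate critical point of $u_\l$, and no others appear because the convergence is uniform and $\nabla u_\l$ is bounded away from zero outside small neighborhoods. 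This gives \eqref{nc2} with the stated structure: $m$ nondegenerate maxima and $m+k-1$ nondegenerate saddles.

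The main obstacle I anticipate is the $C^1$ (gradient) control of $u_\l$ on the intermediate region — between the bubble scale near $P_i$ and the outer region where the Green-function description holds — and in particular showing there are no "stray" critical points in the annuli $B_\rho(P_i)\setminus B_{r_\l}(P_i)$ as $r_\l\to 0$. This requires sharp pointwise estimates on $\nabla u_\l$ at intermediate scales (of the type found in the refined blow-up analysis of \cite{cli}, \cite{grohsu}), showing that $|\nabla u_\l|$ does not vanish there; I would isolate this as a separate lemma using the known expansion of $u_\l$ and its derivatives.
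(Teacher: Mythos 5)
Your overall architecture (local analysis near each $P_i$, refined gradient control in the intermediate annuli, $C^1$ convergence to $8\pi\sum_j G(\cdot,P_j)$ on $D$, Poincar\'e--Hopf, and an explicit domain for sharpness) matches the paper's, but there are two genuine gaps in the counting argument. First, to pass from the index identity $\sum_{z_j\in\mathcal C_\l}index_{z_j}(\nabla u_\l)=1-k-m$ to the existence of \emph{at least $m+k-1$ distinct nondegenerate saddles}, you must know that every critical point of $u_\l$ in $D$ is isolated and has index in $\{1,0,-1\}$, and that index $-1$ forces nondegeneracy. A single degenerate critical point of index $-(m+k-1)$ (a ``monkey-saddle''--type point, perfectly possible for a general smooth function) would satisfy the index sum with only one critical point, destroying \eqref{nc}. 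You flag this as a ``minor obstacle'' to be handled by genericity or by nondegeneracy of $u_\l$, but the theorem assumes an \emph{arbitrary} blowing-up family: no nondegeneracy of $(P_1,\dots,P_m)$ as a critical point of $\mathcal{KR}_\Omega$ is available, and there is no genericity to appeal to. The paper closes this via the Alessandrini--Magnanini structure theory for critical points of solutions of elliptic equations in the plane (Theorem \ref{AM} and \cite{am}, Thm.\ 3.3), which also requires a separate argument that the critical set in $D$ is finite (ruling out curves of critical points, Proposition \ref{prop2}); your proposal addresses neither point. Second, your assertion that \emph{all} of $\mathcal C_\l$ consists of exactly $m+k-1$ index-$(-1)$ saddles, on the grounds that $u_\l$ is ``a positive harmonic-like function with no interior extrema'' on $D$, is false: $u_\l$ is strictly superharmonic, so while it has no interior minima it can certainly have interior maxima in $D$, and the paper's Theorem \ref{ex} exhibits blowing-up families with an extra maximum (and extra saddles) in $D$. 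This does not damage the lower bound \eqref{nc}, but the reasoning is unsound and would wrongly ``prove'' that equality always holds in \eqref{nc}.

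For the sharpness part \eqref{nc2}, your plan (a symmetric domain with $k$ holes whose limit function $8\pi\sum_j G(\cdot,P_j)$ has exactly $m+k-1$ nondegenerate saddles, transferred to $u_\l$ by perturbation) is the right idea but is not carried out: for a disk with $k$ small disks removed you would still have to (a) produce an $m$-point blowing-up family, and (b) actually verify the critical set of the limit, which is the hard step. The paper instead builds a symmetric $m$-dumbbell and attaches $k$ thin ``lateral handles''; Rolle's theorem applied along the symmetry axis and across each handle produces $m-1+k$ saddles by hand, and the index budget $1-k-m$ together with $index_{z}(\nabla K)\le -1$ then forces these to be all of them and to be nondegenerate. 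If you want to keep your perturbative route you must supply the explicit computation of the critical points of the limiting Green-function sum for your chosen domain, and also justify that no critical points of $u_\l$ escape to the annuli $B_\rho(P_i)\setminus B_{\bar R\delta_{i,\l}}(P_i)$ --- the latter is exactly the content of the paper's Proposition \ref{prop-2.1} and the radial-monotonicity argument of Proposition \ref{lem1}, which you correctly identified as the main analytic obstacle.
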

\vskip0.2cm
\begin{center}
\boxed{
\begin{tikzpicture}[scale=0.8]
  \draw[blue] (0,0) circle (1cm); 
  \draw[white, line width=3cm] (2, -0.15) arc (0:16:2cm);
  \draw[blue]  (4,0) circle (1cm); 
  \draw[white, line width=3cm] (3, -0.15) arc (-4:1:3.1cm);
  \draw[blue]  (8,0) circle (1cm); 
  \draw[white, line width=3cm] (5, -0.15) arc (0:8:2cm);
   \draw[white, line width=3cm](7, -0.15) arc (0:8:2cm);
  \draw[blue]  (1,-0.15) -- (3,-0.15); 
  \draw[blue]  (1,0.15) -- (3,0.15); 
  \draw[blue]  (5,-0.15) -- (7,-0.15); 
  \draw[blue]  (5,0.15) -- (7,0.15); 
   \fill (0,0) circle (1pt) node[anchor=north] {$P_1$};
     \fill (4,0) circle (1pt) node[anchor=north] {$P_2$};
       \fill (8,0) circle (1pt) node[anchor=north] {$P_3$};
        \draw[red, fill=red] (2,0) circle (1pt) node[anchor=north] {$z_1$};
                 \draw[red, fill=red] (6,0) circle (1pt) node[anchor=north] {$z_2$};
       \draw (-0.37,0.1) parabola bend (0,2)
(0.3, 0);
\draw (3.7,-0.1) parabola bend (4,2)
(4.3,-0.1);
\draw (7.7,0) parabola bend (8,2)
(8.3,0);
  \fill (14.1,1.6)  node[anchor=north] {Example of a domain with $k=0$ whose solution };
   \fill (14,0.8)  node[anchor=north] {$u_\l$ concentrates at $P_1,P_2,P_3$. We have that $u_\l$};
     \fill (13.8,0)  node[anchor=north] { has $3$ maxima and $2$ saddle points {\red$z_1$} and {\red$z_2$}};
\end{tikzpicture}}
\end{center}
\begin{center}
\boxed{
\begin{tikzpicture}[scale=0.8]
\draw[red] (-1.97,-0.03) arc
    [start angle=0,
        end angle=350,
        x radius=0.4cm,
        y radius =0.4cm] ;

\draw[red] (-1.65,-0.15) arc
    [start angle=0,
        end angle=315,
        x radius=0.7cm,
        y radius =0.6cm] ;

        \draw[blue] (-1.6,-0.15) arc
   [start angle=0,
        end angle=330,
        x radius=1cm,
        y radius =0.9cm] ;
        
                \draw[blue] (-1.45,-0.15) arc
   [start angle=0,
        end angle=320,
        x radius=1.2cm,
        y radius =1cm] ;
\begin{scope}
\clip [] (0,0) circle [radius=1.9cm-0.5\pgflinewidth]; 
\fill[white]  (0,0) circle [radius=2cm];
\end{scope}

\draw[blue] (0,0.05) arc
    [start angle=10,
        end angle=348,
        x radius=1cm,
        y radius =1cm] ;
        
      \draw[blue] (4,0.05) arc
    [start angle=9,
        end angle=171,
        x radius=1cm,
        y radius =1cm] ;  
        
         \draw[blue] (4,-0.35) arc
    [start angle=-13,
        end angle=-168,
        x radius=1cm,
        y radius =1cm] ;    
        
        \draw[blue] (6,0.05) arc
    [start angle=170,
        end angle=-168,
        x radius=1cm,
        y radius =1cm] ;
        
  \draw[blue]  (0,0.042) -- (2.02,0.042); 
    \draw[blue]  (0,-0.342) -- (2.02,-0.342); 
    \draw[blue]  (4,0.042) -- (6.02,0.042); 
    \draw[blue]  (4,-0.342) -- (6.02,-0.342); 
    
       \draw[->] (-4.5,-0.15) -- (9,-0.15) node[right] {$x$}; 
   \draw[->] (3,-1.3) -- (3,2.5) node[right] {$y$};
   \fill (-1,-0.15) circle (1pt) node[anchor=north] {$P_1$};
     \fill (3,-0.15) circle (1pt) node[anchor=north] {$P_2$};
       \fill (7,-0.15) circle (1pt) node[anchor=north] {$P_3$};
            \draw[red, fill=red] (1,-0.15) circle (1pt) node[anchor=north] {$z_2$};
            \draw[red, fill=red] (5,-0.15) circle (1pt) node[anchor=north] {$z_1$};
     \draw[red, fill=red] (-2.9,-0.15) circle (1pt) node[anchor=north] {$z_3$};
      \draw[red, fill=red] (-3.7,-0.15) circle (1pt) node[anchor=north] {$z_4$};
                 
                    \fill (-3,0.65)  node[anchor=north] {\large$\mathcal{C}_1$};
                      \fill (-4,0.65)  node[anchor=north] {\large$\mathcal{C}_2$};
        \draw (-1.27,0) parabola bend (-1.1,2) (-0.67, 0);
\draw (2.7,-0.1) parabola bend (3,2)(3.3,-0.1);
\draw (6.7,0) parabola bend (7,2) (7.3,0);
                      \fill (2,-2)  node[anchor=north] {Example of a domain with $k=2$ whose solution $u_\l$ concentrates at $P_1,P_2,P_3$. We};
                      \fill (-0.55,-2.5)  node[anchor=north] {have that $u_\l$ has $3$ maxima and $4$ saddle points {\red$z_1$},.., {\red$z_4$}};
 \end{tikzpicture} }
 \end{center}
 \vskip0.2cm
 The proof of the previous theorem uses two basic tools,
 \begin{itemize}
 \item Some delicate estimates on the asymptotic behavior of the solution $u_\l$
 \item Techniques of differential topology as the Poincarè-Hopf Theorem and degree theory
 \end{itemize}
Observe that the estimates on the asymptotic behavior of  $u_\l$ use quite heavily the shape of the nonlinearity but it is reasonable to conjecture that can be obtained also for other nonlinearities (for example of power type or for Moser-Trudinger problems).
The techniques of differential topology mainly concerns the computation of critical points of $harmonic$ functions and are independent of the exponential nonlinearity.
 
A natural question which arises from the previous theorem is the following,
\vskip0.2cm
{\em Question $1$.} Let us consider a domain $\Om$ with $k\ge0$ holes. If we consider a solution  to \eqref{1} blowing-up at $m$ points in $\Om$, for what values $m$ and $k$ does equality in \eqref{nc} hold?
\vskip0.2cm
The rest of the paper is devoted to answer to this question. The first result is,
\begin{theorem}\label{T1}
Assume $\Omega\subset\R^2$ and $u_\l$ is a family of solutions to \eqref{1} that blow-up at $m\geq 1$ points $\{P_1,\dots,P_m\}$ as $\l\to 0$. Then we have that
\begin{itemize}
\item[\bf a)] If $m=1$ and $k=0$ (i.e. $\Om$ is simply connected) then any solution $u_\l$ to \eqref{1} has, for $\l$ small,  only $1$ critical point in $\Omega$, (obviously its maximum) which is nondegenerate.
 \item[\bf b)] If $m=2$ and $k=0$ (i.e. $\Om$ is simply connected) then any solution $u_\l$ to \eqref{1} has, for $\l$ small,  $3$ nondegenerate critical points: the maximum points in the balls $B_\rho(P_1)$, $B_\rho(P_2)$  and a saddle point.
\item[\bf c)] If $m=1$ and $k=1$ (i.e $\Om$ has one hole) then any solution $u_\l$ to \eqref{1} has, for $\l$ small, $two$  nondegenerate critical points in $\Omega$; one is the absolute maximum, the second one is a saddle point. 
 \end{itemize}
\end{theorem}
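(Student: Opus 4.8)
The three cases correspond exactly to $m+k-1\in\{0,1\}$. Since Theorem \ref{prop-general} already provides one nondegenerate maximum in each ball $B_\rho(P_i)$ and at least $m+k-1$ nondegenerate saddle points in $D=\Om\setminus\bigcup_{i=1}^m B_\rho(P_i)$ — hence the lower bound $2m+k-1$ — the only thing left to prove is that for these pairs $(m,k)$ no further critical point appears. The plan is to pass to the limit $\l\to0$, reduce the question to counting the critical points of the limit profile, and then exploit the fact that a count $m+k-1\le1$ is rigid.

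First I would record the limit profile. By the blow-up analysis of Section \ref{se:2}, away from $\{P_1,\dots,P_m\}$ one has $u_\l\to H:=8\pi\sum_{i=1}^m G(\cdot,P_i)$ locally uniformly in $\overline\Om\setminus\{P_1,\dots,P_m\}$, $G$ being the Dirichlet Green function of $-\D$ in $\Om$; and since $\|u_\l\|_{L^\infty(\overline D)}$ is bounded, so that $\l e^{u_\l}\to0$ in $L^\infty(\overline D)$, interior and boundary elliptic estimates upgrade this to $u_\l\to H$ in $C^2(\overline D)$. The function $H$ is positive in $\Om$, vanishes on $\de\Om$, is harmonic in $\Om\setminus\{P_1,\dots,P_m\}$ and behaves like $-4\log|x-P_i|$ near each $P_i$; hence $f:=\de_z H=\tfrac12(\de_x H-i\,\de_y H)$ is holomorphic in $\Om\setminus\{P_1,\dots,P_m\}$ with a simple pole at each $P_i$. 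By the Hopf boundary lemma $\na H\neq0$ on $\de\Om$, and $|\na H|\ge c\rho^{-1}$ on $\bigcup_i\de B_\rho(P_i)$ for $\rho$ small; so every zero of $f$ lies in the interior of $D$.

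Next I would count those zeros by the argument principle on $\Om$, whose boundary has $k+1$ Jordan components. Parametrizing each component counterclockwise, $\na H$ is the interior normal on the outer one and the exterior normal to each hole, and in both cases winds once positively; taking into account the orientation induced by $\Om$ (outer component counterclockwise, the $k$ inner ones clockwise), $f=\tfrac12\,\overline{\na H}$ has winding number $-1$ along the outer component and $+1$ along each inner one. Hence $\#\{\text{zeros of }f\}-\#\{\text{poles of }f\}=k-1$, so $H$ has exactly $m+k-1$ critical points in $\Om\setminus\{P_1,\dots,P_m\}$ counted with multiplicity, all lying in $\mathrm{int}\,D$. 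If $(m,k)=(1,0)$ this number is $0$: then $\inf_{\overline D}|\na H|>0$, so by the $C^1$ convergence $\na u_\l\neq0$ on $\overline D$ for $\l$ small, and the only critical point of $u_\l$ is the nondegenerate maximum in $B_\rho(P_1)$. If $(m,k)\in\{(2,0),(1,1)\}$ this number is $1$, so the unique zero $z_0$ of $f$ is \emph{simple}: $z_0$ is a nondegenerate critical point of $H$, necessarily a saddle of index $-1$ since $\D H(z_0)=0$ forces the Hessian to be indefinite. Then $\na H\neq0$ on $\overline D\setminus B_r(z_0)$ yields $\na u_\l\neq0$ there for $\l$ small, while the $C^2$ convergence on $\overline{B_r(z_0)}$ together with the implicit function theorem yields exactly one, nondegenerate, critical point of $u_\l$ in $B_r(z_0)$, again a saddle. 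Adding the $m$ nondegenerate maxima of Theorem \ref{prop-general}, $u_\l$ has, for $\l$ small, exactly $3$ critical points when $(m,k)=(2,0)$ and exactly $2$ when $(m,k)=(1,1)$, of the stated nature.

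The conceptual point is to see $\de_z H$ as a meromorphic function with prescribed simple poles and to observe that $m+k-1\le1$ rules out both a degenerate critical point of $H$ and any index-cancelling pair of extra critical points of $u_\l$. The main technical obstacle is precisely the uniformity, up to $\de\Om$ and up to the spheres $\de B_\rho(P_i)$, of the $C^2$ convergence $u_\l\to H$: this is what forbids a critical point of $u_\l$ from escaping towards the boundary, and so what lets the exact count pass from $H$ to $u_\l$. It relies on the fine asymptotics for the Gel'fand problem.
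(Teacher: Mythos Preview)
Your proof is correct and follows the same strategy as the paper: pass to the harmonic limit $K(x)=8\pi\sum_jG(x,P_j)$ on $D$, show that its critical points counted with multiplicity number exactly $m+k-1$, and observe that when this count is $0$ or $1$ the situation is rigid enough to transfer back to $u_\l$ via $C^2$ convergence and the implicit function theorem.

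The only difference is packaging. The paper obtains the count $m+k-1$ by combining Poincar\'e--Hopf (Proposition~\ref{prop-preuguali}, Corollary~\ref{prop-uguali}) with the Alessandrini--Magnanini fact (Theorem~\ref{AM}, Remark~\ref{RAM}) that every critical point of a harmonic function has index $\le-1$; you instead apply the argument principle directly to the meromorphic function $\partial_zH$, reading off the zero count from the winding of $\overline{\nabla H}$ along $\partial\Om$. These are two phrasings of the same computation: the multiplicity $m_j$ of a zero of $\partial_zH$ is precisely $-\,\mathrm{index}_{z_j}(\nabla H)$, and the winding-number sum over $\partial\Om$ is the Euler-characteristic term in Poincar\'e--Hopf. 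Your route is slightly more self-contained here since it bypasses the separate isolation argument of Proposition~\ref{prop2}, the nondegeneracy of $z_0$ (from simplicity of the unique zero) doing that work automatically; the paper's route has the advantage that its intermediate statements (Propositions~\ref{prop-critical-points-K}--\ref{prop-uguali}) are reused elsewhere, notably for Theorem~\ref{prop-general}.
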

\begin{figure}[h]
\centering
\subfigure[Example of case {\bf a)} where the solution $u_\l$ has $1$ critical point.]
{\includegraphics[width=5cm,height=2.5cm]{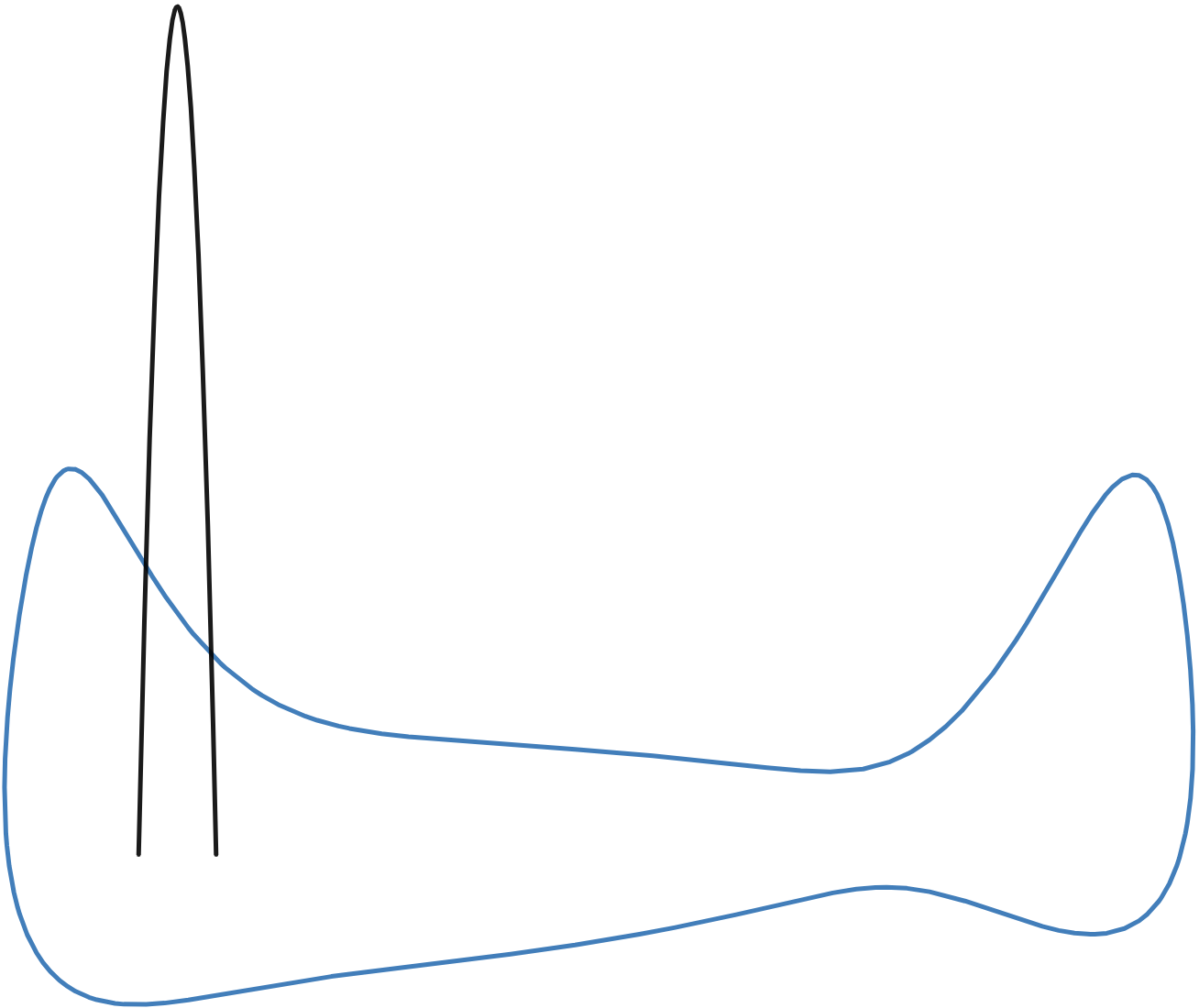}}
\hspace{10mm}
\subfigure[Example of case {\bf c)} where the solution $u_\l$ has $2$ critical points.]
{\includegraphics[width=5cm, height=2.5cm]{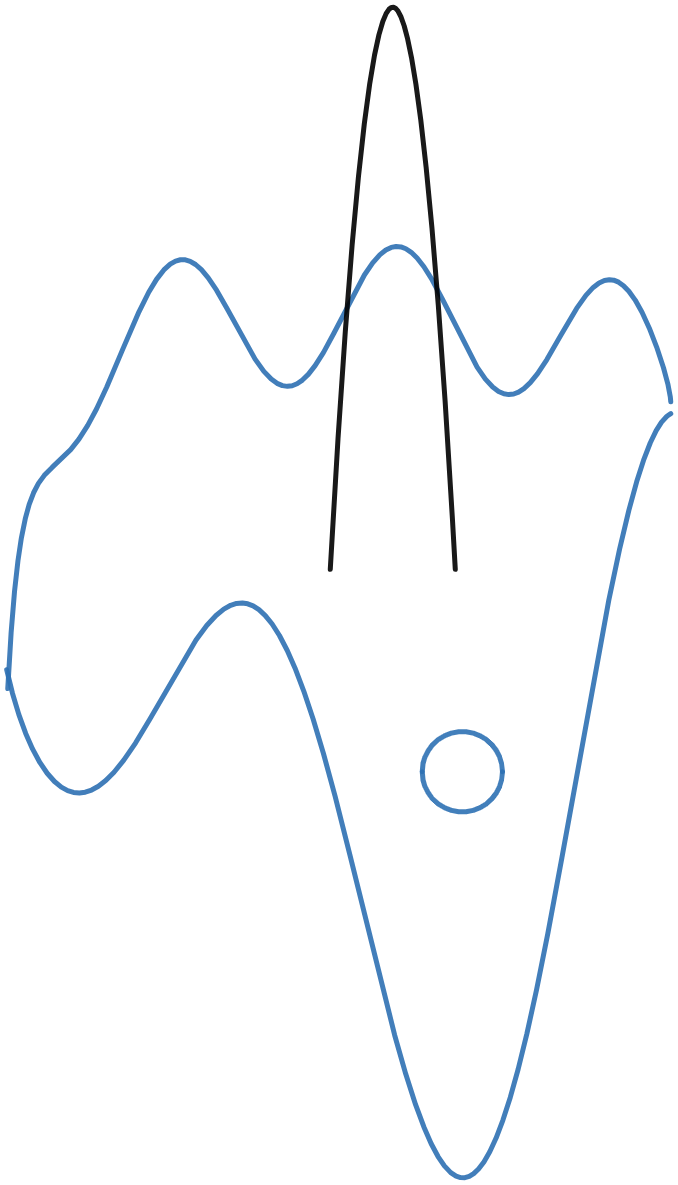}}
\end{figure}
\begin{remark}
The claim $\bf a)$ in Theorem \ref{T1} was proved in \cite{gg1} under the additional assumption of the $convexity$ of $\Om$ (see also \cite{gm} for similar results in higher dimensions). Observe that in \cite{gg1}  it was obtained the stronger claim that the super-level sets of $u_\l$ are star-shaped but this properties will not be true in our setting of general simply-connected domains.\\
\end{remark}
We stress that Theorem \ref{T1} claims that, under the assumption $\bf a)$, $\bf b)$ or $\bf c)$  $any$ blowing-up solution admits the same number of critical points.\\
This is not true outside of this setting as showed in next Theorem.
\begin{theorem}\label{ex}
For every $m\geq 3$ there exists a simply connected domain $\widehat\Om$ and a corresponding family of solutions $u_\l$ to \eqref{1} that blow-up at $m$ points $\{P_1,\dots,P_m\}$ as $\l\to 0$,  such that $u_\l$ has at least $2m+1$ nondegeneratecritical points,  $m+1$ of them are local maxima and $m$ are saddle points. \\
Moreover for every $m\geq 3$ and for every $\lambda>0$ small, there exists a domain $\widehat\Om_\l$, which has $k\geq 1$ holes and such that problem \eqref{1} admits a solution $\widehat u_\l$ in $\widehat\Om_\l$ with at least $2m+k+1$ nondegenerate critical points, $m+1$ of them are local maxima and $m+k$ are saddle points. 
As $\l\to 0$ along a sequence  $ \widehat\Om_\l\to \widehat\Omega$ and  $\widehat u_\l$  blow-up at $m$ points.
\end{theorem}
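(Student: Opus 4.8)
The idea is to realize the ``extra'' maximum of $u_\l$ near a \emph{degenerate} critical point of the limit $\mathcal{H}:=8\pi\sum_{i=1}^m G_{\widehat\Om}(\cdot,P_i)$, which is harmonic in $D=\widehat\Om\setminus\bigcup_iB_\rho(P_i)$, exploiting that $u_\l$ is superharmonic while $\mathcal{H}$ is not. Since $u_\l\to\mathcal{H}$ in $C^1$ on compact subsets of $\overline{\widehat\Om}\setminus\{P_1,\dots,P_m\}$ (the blow-up description recalled in Section~\ref{se:2}), $u_\l$ can have critical points in $D$ only near the critical set of $\mathcal{H}$. A non-constant harmonic function has isolated critical points, each of index $\le -1$ --- the index of $\nabla\mathcal{H}$ at a zero of order $\nu$ of the holomorphic map $\partial_x\mathcal{H}-i\partial_y\mathcal{H}$ is exactly $-\nu$ --- and, since $\nabla\mathcal{H}$ has the same boundary behaviour on $\partial D$ as $\nabla u_\l$, the Poincar\'e--Hopf identity gives $\sum_z\mathrm{index}_z(\nabla\mathcal{H})=1-k-m$, as in \eqref{nb}. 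The plan is to choose $\widehat\Om$ so that this whole ``budget'' is carried by a \emph{single} critical point of $\mathcal{H}$ of order $m-1$, which the superharmonic perturbation $-\Delta(u_\l-\mathcal{H})=\l e^{u_\l}>0$ then breaks into one local maximum and $m$ saddles.

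Accordingly, for $m\ge3$ I take $\widehat\Om\subset\R^2$ smooth, simply connected and invariant under the dihedral group $D_m$ (the $m$ rotations by multiples of $2\pi/m$ and the $m$ reflections): a central core containing the fixed point $z_0$ of $D_m$, together with $m$ long thin arms permuted cyclically, each ending in a round lobe. I choose $(P_1,\dots,P_m)$ a $D_m$-orbit, one point per lobe, obtained as an interior extremum of $\mathcal{KR}_{\widehat\Om}$ restricted to the one-parameter family of $D_m$-symmetric configurations; by the principle of symmetric criticality this is a critical point of $\mathcal{KR}_{\widehat\Om}$, and for the arms long and thin enough it is stable (indeed non-degenerate), so that \cite{bapa}, \cite{egp}, \cite{dkm}, applied equivariantly, produce a $D_m$-\emph{symmetric} family $u_\l$ of solutions of \eqref{1} in $\widehat\Om$ blowing up at $\{P_1,\dots,P_m\}$. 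By Theorem~\ref{prop-general}, each $B_\rho(P_i)$ contains exactly one critical point of $u_\l$, a non-degenerate maximum.

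The heart of the matter is the analysis at $z_0$. Being $D_m$-invariant and harmonic near $z_0$, $\mathcal{H}$ has there the expansion $\mathcal{H}(z)=\mathcal{H}(z_0)+\sum_{j\ge1}c_{jm}\,\mathrm{Re}\big((z-z_0)^{jm}\big)$ (the $D_m$-invariant harmonic polynomials), so $\nabla\mathcal{H}(z_0)=0$; if $c_m=0$ then $\partial_x\mathcal{H}-i\partial_y\mathcal{H}$ would vanish at $z_0$ to order $\ge 2m-1$ (not identically, as $\mathcal{H}$ is non-constant), so $\mathrm{index}_{z_0}(\nabla\mathcal{H})\le 1-2m$, incompatible with the total $1-k-m$ since every other critical point has index $\le -1$. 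Hence $c_m\ne0$, $\mathrm{index}_{z_0}(\nabla\mathcal{H})=1-m$, and $z_0$ is the \emph{only} critical point of $\mathcal{H}$ in $D$; therefore $u_\l$ has no critical point in $D\setminus B_\e(z_0)$ for $\l$ small. Near $z_0$ I use the symmetry once more: $u_\l-\mathcal{H}$ is $D_m$-symmetric, so its Taylor expansion at $z_0$ has no linear term and its quadratic part is the rotationally invariant $B_\l|z-z_0|^2$ with $4B_\l=\Delta(u_\l-\mathcal{H})(z_0)=-\l e^{u_\l(z_0)}<0$; elliptic regularity on the fixed ball $B_\e(z_0)$, on which $u_\l$ is bounded, then gives
\[
u_\l(z)=\mathrm{const}+B_\l|z-z_0|^2+\big(c_m+o(1)\big)\,\mathrm{Re}\big((z-z_0)^m\big)+\text{(higher order)},\qquad B_\l<0,
\]
all intermediate $D_m$-invariant terms being $O(\l^2)$. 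Rescaling at the scale $\delta_\l$ with $\delta_\l^{m-2}\sim|B_\l|/|c_m|$, the rescaled $u_\l$ converges in $C^1_{loc}$ to (up to rotation and positive scaling) $W(\zeta)=-|\zeta|^2+\mathrm{Re}(\zeta^m)$, which for $m\ge3$ has exactly $m+1$ non-degenerate critical points --- a maximum at the origin and $m$ saddles. So $u_\l$ has in $B_\e(z_0)$ exactly one local maximum (at $z_0$) and $m$ saddles, all non-degenerate; with the $m$ lobe maxima this yields the $2m+1$ critical points, $m+1$ maxima and $m$ saddles, of the first assertion. The hypothesis $m\ge3$ is essential: for $m\le2$, $\mathcal{H}$ is non-degenerate at $z_0$, consistently with Theorem~\ref{T1}.

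For the domains with holes, fix $\l>0$ small, choose $k$ well-separated points $q_1,\dots,q_k$ in one arm where $\nabla u_\l\ne0$, away from all critical points of $u_\l$ and from the $P_i$, and set $\widehat\Om_\l=\widehat\Om\setminus\bigcup_{j=1}^k\overline{B_{\e_{j,\l}}(q_j)}$ with $\e_{j,\l}\to0$ (fast) as $\l\to0$. A domain-perturbation / implicit function argument gives a solution $\widehat u_\l$ of \eqref{1} in $\widehat\Om_\l$ that is $C^1$-close to $u_\l$ away from the holes --- hence keeping the $2m+1$ critical points of $u_\l$ --- while near each hole the local profile $\widehat u_\l(x)\approx a_j\cdot(x-q_j)+u_\l(q_j)\dfrac{\log(|x-q_j|/\e_{j,\l})}{\log(R/\e_{j,\l})}$ has exactly one critical point, whose Hessian is proportional to the indefinite matrix $I-2\,\widehat{s}\,\widehat{s}^{T}$, i.e.\ a non-degenerate saddle. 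Thus each hole adds exactly one saddle, giving $2m+k+1$ critical points ($m+1$ maxima, $m+k$ saddles), and $\widehat\Om_\l\to\widehat\Om$, $\widehat u_\l\to u_\l$ which blows up at $m$ points. The main obstacle is the expansion of $u_\l$ at $z_0$: one must control the remainder, at the shrinking scale $\delta_\l$, against the order-$m$ term $c_m\mathrm{Re}((z-z_0)^m)$; this is where the asymptotics of Section~\ref{se:2} are used, and it is precisely the $D_m$-symmetry that removes the linear and harmonic-quadratic parts of $u_\l-\mathcal{H}$ (otherwise of size $O(\l)$, comparable to $B_\l$, and able to destroy the definiteness of the quadratic part). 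For the second assertion the delicate point is instead the persistence of $u_\l$, at each fixed small $\l$, under the removal of vanishing holes, for which one invokes the non-degeneracy of $u_\l$ (\cite{gg1}, \cite{grohsu}) or a degree argument.
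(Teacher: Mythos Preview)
Your strategy is sound and, for the simply connected case, even yields the exact count $2m+1$ rather than only the lower bound the paper states. The route, however, is quite different from the paper's. The paper never expands $u_\l$ at the barycenter or rescales to the model $-|\zeta|^2+\mathrm{Re}(\zeta^m)$; instead it argues topologically in Proposition~\ref{prop1}: by Proposition~\ref{prop2} (resting on Alessandrini--Magnanini) every critical point of $u_\l$ in $D$ has index in $\{1,0,-1\}$, and combining this with the $D_m$-symmetry and the identity $\sum\mathrm{index}=1-m$ forces at least $m$ saddles (if there were only $m-1$, symmetry would place them all at $O$, producing a single critical point of index $1-m\le-2$, contradicting the index constraint) and hence at least one extra maximum, which symmetry again pins at $O$. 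This bypasses all the analysis you need at the shrinking scale $\delta_\l\sim\l^{1/(m-2)}$. In your approach one must actually show $C_{2j}=O(\l^j)$ for the intermediate radial coefficients (your ``$O(\l^2)$'' is only the case $j=2$; the general bound follows by matching $-\Delta w=\l e^{u_\l}$ degree by degree, using that $\nabla u_\l(z_0)=0$, but this has to be carried out), and one must separately exclude critical points at scales between $R\delta_\l$ and a fixed $\e$ --- a step you omit and which is not covered by either the rescaled $C^1_{loc}$ convergence or the fixed-scale convergence to $\mathcal{H}$.

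For the holes the two approaches agree in spirit; the paper, however, makes the existence step precise via Dancer's domain-perturbation theorems (using the nondegeneracy of $u_\l$ from Theorem~\ref{T5}), secures the uniform $L^\infty$ bound needed near the removed ball through a Pohozaev identity together with Lemma~\ref{lemma-piccolezza}, and then invokes Theorem~\ref{th1-1} (from~\cite{grlu}) to conclude that each hole adds exactly one non-degenerate saddle. Your local profile near each hole is precisely the heuristic behind that last theorem, but the existence of $\widehat u_\l$ close to $u_\l$ and its uniform boundedness are where the real work lies and should not be dismissed as an ``implicit function argument''.
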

\begin{remark}
In the proof of Theorem \ref{ex} we also construct another  domain $\Omega$ with $k = mh$ holes, for some $h\in\N$,  such that problem \eqref{1} admits a family of solution $u_\l$ in $\Om$ that blow-up at $m\ge 1$ points $ \{P_1,...,P_m\}$ as $\l\to 0$,  with $2m+k+1$ critical points.  In this alternative case the proof is considerably simpler.
\end{remark}
\begin{center}
\boxed{
\begin{tikzpicture}[scale=0.5]

\draw[blue] (0,0) arc
    [start angle=-20,
        end angle=320,
        x radius=1cm,
        y radius =1cm] ;
        
            \draw[blue] (6,0) arc
    [start angle=200,
        end angle=-140,
        x radius=1cm,
        y radius =1cm] ;

           \draw[blue] (3,-5) arc
    [start angle=100,
        end angle=440,
        x radius=1cm,
        y radius =1cm] ;

 \draw[blue]  (0,0) -- (3.1,-1.7); 
  \draw[blue]  (-0.16,-0.3) -- (2.95,-2.1); 
    \draw[blue]  (6,-0.025) -- (3.1,-1.7); 
   \draw[blue]  (6.2,-0.3) --(3.3,-2.1) ;  
      \draw[blue]  (3.3,-2.1) -- (3.35,-5);  
      \draw[blue]  (2.95,-2.1) -- (3,-5);  

   \fill (-1,0.5) circle (1pt) node[anchor=north] {$P_1$};
     \fill (7,0.5) circle (1pt) node[anchor=north] {$P_2$};
        \fill (3.1,-1.9) circle (2pt) node[anchor=north]{};
      \fill (3.4,-1.9)  node[anchor=north] {$O$};
      \fill (3.2,-5.8) circle (1pt) node[anchor=north] {$P_3$};
           \draw[red, fill=red] (1.3,-0.95) circle (1pt) node[anchor=north] {$z_1$};
                 \draw[red, fill=red] (4.8,-0.95) circle (1pt) node[anchor=north] {$z_2$};
                  \draw[red, fill=red] (3.15,-3.05) circle (1pt) node[anchor=north] {$z_3$};

   \fill (16.9,1.6)  node[anchor=north] {Example of a domain with $k=0$ whose solution };
   \fill (16.3,0.6)  node[anchor=north] {$u_\l$ blows-up at $P_1$, $P_2$, $P_3$. We have that $u_\l$};
     \fill (16.95,-0.6)  node[anchor=north] { has $4$ maxima (note the additional maximum at };                
                  \fill (14.55,-1.6)  node[anchor=north] {$0$) and $3$ saddle points {\red$z_1$}, {\red$z_2$}, {\red$z_3$}};

                     \draw (-1.3,0.5) parabola bend (-1,2.5)(-0.8, 0.5);
\draw (6.7,0.5) parabola bend (7,2.5)(7.3, 0.5);
\draw (2.9,-5.7) parabola bend (3.2,-3.7)(3.5, -5.7);
\draw (2.9,-2) parabola bend (3.1,-1.6)(3.3, -2);
   \end{tikzpicture} }
\end{center}
\vskip0.2cm
\begin{remark}
Theorems \ref{T1} and \ref{ex} provide an ``almost complete'' answer to Question $1$. The cases left open are $m=1$ with $k\ge2$ and $m=2$ with $k\ge1$, where it is not clear if the equality in \eqref{nc} always holds.
\end{remark}
Another natural question arising by the previous result is the following,
\vskip0.2cm
{\em Question $2$.} Let us fix a domain $\Om$ with $k$ holes and $u_\l$ be a solution to \eqref{1} with $m$ peaks. Is there an $upper$ bound on the number of critical point of $u_\l$ depending only by $m$ and $k$?
\vskip0.2cm
We end this introduction with some words on the techniques used in the proof of our results. The first tool we need is a good $C^1$ estimate of the solution $u_\l$ as $\l\to 0$. These results are well known in the literature in the blow-up ball $B_{\delta_{i,\l}\bar R}(P_i)$, for any $\bar R>0$ ,and in the set $\Om\setminus\cup _{i=1}^m B_{\rho}(P_i)$ for any $\rho$ small enough(see Section \ref{se:2}). However no sharp estimate is available in the ``annular'' region $\{\delta_{i,\l}\bar R\le|x-P_i|\le \rho\}$. The knowledge of $u_\l$ in this region is crucial for our computations and it requires delicate estimates (see Section \ref{S2}).

A second tool is given by refined topological arguments on the index of the critical points of harmonic functions. Here we used both classical arguments like the Poincaré-Hopf theorem and some results by Alessandrini and Magnanini in \cite{am}(see Section \ref{ss2}). Resuming, the computation of the critical point of $u_\l$ requires to split $\Om$ in three subdomains,
\begin{itemize}
\item $B_{\delta_{i,\l}\bar R}(P_i)$, for some $\bar R>0$ where there is {\bf one} nondegenerate critical point (of course the maximum)
\item $B_{\rho}(P_i)\setminus B_{\delta_{i,\l}\bar R}(P_i)$ where there is {\bf no} critical point
\item $\Om\setminus\cup_{i=1}^m B_{\rho}(P_i)$ where the number of critical points can vary according with $m$ and $k$.
\end{itemize}
\begin{remark}
As recalled in the beginning, solutions to \eqref{1} are linked to solutions to \eqref{MF}. Then the results in Theorems \ref{prop-general}, \ref{T1}, \ref{ex} hold also for solutions $v_\rho$ to \eqref{MF} that concentrate at $m$ points of $\Omega$ when $\rho-8\pi m$ is small enough.
\end{remark}
\vskip0.2cm
The paper is organized as follows: in Section \ref{se:2} we recall some known facts about the asymptotic behavior of the solution as $\l\to0$ as well as some classical result about the critical point theory. In Section \ref{S2} we refine some asymtotics of the solution $u_\l$ in an annular set  that shrinks slowly.  In Section \ref{S3} we apply the previous results to show that, for every $i=1,\dots,m$,  in $B_\rho(P_i)$ there is only one nondegnerate critical point. In Section  \ref{S4} we analyze the structure of the critical points of $u_\l$ in the remaining part of $\Om$ and prove Theorem \ref{prop-general}. In Section \ref{S5} we prove Theorems \ref{T1} and \ref{ex}.

\section{Preliminaries}\label{se:2}
\vskip0.3cm
\subsection{Asymptotic estimates of the solution $\mathbf{u_\l}$}\label{ss1}
In this section we recall some known fact on the asymptotic behavior of solutions to \eqref{1} as $\l\to0$. Most of the proofs can be found in  \cite{nasu} (see also \cite{mw2}, \cite{gg1} and the references therein).

Let $\{\ln\}_{n\in\N}$ be a sequence of positive values such that $\ln\to 0$ as $n\to \infty$ and let $\un=\un(x)$ be a sequence of solutions of \eqref{1} for $\l=\ln$.

We have the following quantized blow-up mechanism 

\begin{equation}\label{2}
\ln \intO e^{\un}\, dx\rightarrow 8\pi m
\end{equation}
for some $m=0,1,2, \cdots, +\infty$ along a sub-sequence.\\
\begin{itemize}
\item If $m=0$ the pair $({\lambda_n},u_{n} )\in{\cal X}$
converges to
$(0,0)$ as ${\lambda_n}\rightarrow0$ and $u_n$ is the minimal solution with Morse index $0$.\\
\item If $m=+\infty$ there arises the entire blow-up of the solution
$u_n$, in the sense that
$\inf_C u_n\rightarrow+\infty$ for any $C$ compact, $C\subset \Omega$. \\
\item If $0<m<\infty$ the solutions $\{ u_{n}\}$ blow-up at $m$-points.
\vskip0.2cm\noindent
\item Thus there is a set 
\begin{equation}\label{2a}
{\cal S}=\{ P_1, \cdots,P_m\}\subset \Omega
\end{equation}
of $m$ distinct points such
that  $\nor \un \nor_{L^{\infty}(\omega)}=O(1)$
for any $\omega$ compact $\omega\subset \overline{ \Omega}\setminus \S$,\\
\item $\un{|_{\S}}\rightarrow +\infty \quad \hbox{ as }n\to \infty.$
\end{itemize}
Next we describe the pointwise limit of $u_n$. Here and henceforth, $G(x,y)$ denotes the Green function of
$-\Delta$ in $\Om$ with Dirichlet boundary condition,  that is
\begin{equation}
\label{4}
G(x,y)=\frac 1{2\pi}\log{|x-y|^{-1}}+H(x,y)
\end{equation}
where $H(x,y)$ is the regular part of $G(x,y)$ and we denote by $\mathcal R(x)=H(x,x)$ the {\em Robin-function} of $\Omega$. Further we recall the definition the so called {\em Kirchhoff-Routh} function of $\Omega$
$$\mathcal{KR}_\Omega(x_1,\dots, x_m)=\frac 12 \sum_{j=1}^m \mathcal R(x_j)+\frac 12 \sum_{\substack   {1\leq j,h\leq m\\ j\neq h}}G(x_j,x_h).$$ The following results can be found in \cite{mw2} and \cite{su3}.
\begin{theorem}\label{T0}
Let $u_n$
be  a sequence of solutions to \eqref{1} which blows-up at $\{P_1,\dots,P_m\}$ as $ n\to \infty$.
Then we have that
\begin{equation}\label{3}
u_n \rightarrow 8\pi \sum_{j=1}^m \,G(\cdot, P_j)\quad \hbox{ in }C^{2}_{loc}(\overline{ \Omega} \setminus \{P_1, \dots, P_m\})
\end{equation}
and
\begin{equation}\label{5}
\nabla \mathcal{KR}_\Omega (P_1,\dots ,P_m)=0.
\end{equation}

\end{theorem}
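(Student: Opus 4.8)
The plan is to deduce \eqref{3} from the Green representation of $\un$ together with the weak-$*$ convergence of the measures $d\mu_n:=\ln e^{\un}\,dx$, and then to extract \eqref{5} from a localized Pohozaev identity at each $P_i$. First I would write $\un(x)=\intO G(x,y)\,d\mu_n(y)$, valid because $-\Delta\un=\ln e^{\un}$ with $\un=0$ on $\partial\Om$. By \eqref{2} the masses $\mu_n(\Om)$ are uniformly bounded, so along a subsequence $\mu_n\rightharpoonup\mu$ weakly-$*$ as measures on $\overline\Om$, with $\mu(\overline\Om)=8\pi m$. For every $\varphi\in C_c(\overline\Om\setminus\S)$ one has $|\intO\varphi\,d\mu_n|\le\ln\|\varphi\|_\infty\,|\mathrm{supp}\,\varphi|\,e^{\|\un\|_{L^\infty(\mathrm{supp}\,\varphi)}}\to0$, since $\|\un\|_{L^\infty(\omega)}=O(1)$ on compacta $\omega\subset\overline\Om\setminus\S$ while $\ln\to0$; hence $\mu$ is supported on $\S$, i.e. $\mu=\sum_{j=1}^m\a_j\delta_{P_j}$ with $\a_j\ge0$ and $\sum_j\a_j=8\pi m$. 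Next I would show $\a_j=8\pi$: rescaling $\un$ about a sequence of maximum points on a small ball around $P_j$ produces, in the limit, an entire solution of $-\Delta v=e^v$ in $\R^2$ of finite total mass, which by the Chen--Li classification is the standard bubble of mass $8\pi$, so $\a_j\ge8\pi$; since $\S$ has exactly $m$ points and $\sum_j\a_j=8\pi m$, this forces $\a_j=8\pi$ for every $j$. The limit measure being thus uniquely $8\pi\sum_j\delta_{P_j}$, the whole sequence converges and $\un\to8\pi\sum_{j=1}^m G(\cdot,P_j)$ in $L^1(\Om)$ and pointwise away from $\S$; finally, elliptic regularity (using $-\Delta\un=\ln e^{\un}\to0$ in $C^k_{loc}(\overline\Om\setminus\S)$ for all $k$, together with $\un=0$ on $\partial\Om$) upgrades this to convergence in $C^2_{loc}(\overline\Om\setminus\S)$, which is \eqref{3}.

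To prove \eqref{5} I would fix $i\in\{1,\dots,m\}$ and $\rho>0$ small with $\overline{B_\rho(P_i)}\cap\S=\{P_i\}$, multiply $-\Delta\un=\ln e^{\un}$ by $\partial_k\un$, and integrate by parts on $B_\rho(P_i)$ to obtain the translation Pohozaev identity
\begin{equation*}
\int_{\partial B_\rho(P_i)}\Big(\tfrac12|\na\un|^2\nu_k-(\partial_\nu\un)(\partial_k\un)\Big)\,d\sigma=\ln\int_{\partial B_\rho(P_i)}e^{\un}\nu_k\,d\sigma,\qquad k=1,2.
\end{equation*}
Letting $n\to\infty$, the right-hand side tends to $0$ (since $\ln e^{\un}\to0$ uniformly near $\partial B_\rho(P_i)$), while by \eqref{3} the left-hand side converges to the same boundary integral with $\un$ replaced by $\Phi_i:=8\pi\sum_{j=1}^m G(\cdot,P_j)=-4\log|x-P_i|+8\pi\,\g_i(x)$, where $\g_i(x):=H(x,P_i)+\sum_{j\ne i}G(x,P_j)$ is harmonic in a neighbourhood of $P_i$. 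Expanding that integral, the purely logarithmic part vanishes because $\int_{\partial B_\rho(P_i)}\nu_k\,d\sigma=0$, the purely regular part is $O(\rho)$, and the mixed part equals $64\pi^2\,\partial_k\g_i(P_i)$ (using that $\partial_k\g_i$ is harmonic, so its average over $\partial B_\rho(P_i)$ equals $\partial_k\g_i(P_i)$); letting $\rho\to0$ then gives $\na\g_i(P_i)=0$. Since a direct computation using the symmetry of $G$ identifies $\na\g_i(P_i)$ with $\partial_{x_i}\mathcal{KR}_\Om(P_1,\dots,P_m)$, running $i$ over $1,\dots,m$ yields \eqref{5}.

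The step I expect to be the real obstacle is the mass quantization $\a_j=8\pi$, namely ruling out non-simple (multiple-bubble) blow-up at a single $P_j$ — equivalently, showing that no mass of $\mu_n$ escapes between the bubble scale and a fixed scale around $P_j$. This is exactly the quantized blow-up analysis carried out in \cite{nasu} (see also \cite{mw2}, \cite{su3} and the general mass-quantization theorem of Li--Shafrir); all the remaining ingredients — the Green representation, weak-$*$ compactness of measures, elliptic regularity and the Pohozaev identity — are standard.
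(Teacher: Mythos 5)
Your proposal is correct, but note that the paper does not prove Theorem \ref{T0} at all: it is recalled from \cite{mw2} and \cite{su3} as a known preliminary. Your sketch (Green representation plus weak-$*$ convergence of $\l_n e^{u_n}dx$ to $8\pi\sum_j\delta_{P_j}$, elliptic regularity for the $C^2_{loc}$ upgrade, and the translated Pohozaev identity on small balls to extract $\nabla\mathcal{KR}_\Omega=0$) is precisely the standard argument of those references, and the one genuinely delicate point — the mass quantization $\a_j=8\pi$, i.e.\ excluding residual mass between the bubble scale and a fixed scale — is correctly identified and correctly delegated to the quantized blow-up analysis of \cite{nasu} and Li--Shafrir.
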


Next we describe the $local$ behavior of $u_n$ near the blow up points.

Let $\{ P_1, \cdots,P_m\}\in\S$ (see \eqref{2a}) and take $0<R\ll 1$ satisfying $\bar B_{2R}(P_i)\subset \Omega$ for $i=1,\dots,m$ and $B_R(P_i)\cap B_R(P_j)=\emptyset$ if $i\neq j$. For each $P_j\in \S$, $j=1,\dots,m$, there exists a sequence $\{x_{j,n}\}\in B_R(P_j)$ such that
\begin{itemize}
\item[$ i)$] $\un(x_{j,n})=\sup_{B_R(x_{j,n})}\un(x)\rightarrow +\infty$,
\item[$ii)$] $x_{j,n}\to P_j$ as $n\to +\infty$.
\end{itemize}
Then we rescale $\un$ around $x_{j,n}$ as
\begin{equation}\label{2.4}
\tilde u_{j,n}(x):= \un\left( \d_{j,n} x +x_{j,n}\right)- \un(x_{j,n})\quad \hbox{ in }B_{\frac{R}{\d_{j,n}}}(0)
\end{equation}
where the scaling parameter $\d_{j,n}$ is determined by
\begin{equation}\label{2.5}
\ln e^{\un(x_{j,n})}\d_{j,n}^2=1.
\end{equation}

By \cite[Corollary 4.3]{grohsu} there exists a constant $d_j>0$ such that, up to a sub-sequence,
\begin{equation}\label{2.6b}
\d_{j,n}=d_j \ln^{\frac 12}+o\left( \ln^{\frac 12}\right)
\end{equation}
as $n\to \infty$. Then \eqref{2.5} and \eqref{2.6b} in turn give
\begin{equation}\label{2.6c}
\un(x_{j,n})=-2 \log \ln +O(1)
\end{equation}
as $n\to \infty$ for any $j=1,\dots,m$.\\
The rescaled function $\tilde u_{j,n}$ in (\ref{2.4}) satisfies
\begin{equation}\nonumber
\left\{
\begin{array}{ll}
-\Delta \tu_{j,n}=e^{\tu_{j,n}} & \hbox{ in }B_{\frac{R}{\d_{j,n}}}(0)\\
\tu_{j,n}\leq \tu_{j,n}(0)=0& \hbox{ in }B_{\frac{R}{\d_{j,n}}}(0)
\end{array}
\right.
\end{equation}
and then a classification result (see \cite{cli}) implies
\begin{equation}\label{2.6}
\tu_{j,n}(x)\rightarrow U(x)=\log \frac 1{\left( 1+\frac{|x|^2}8\right)^2} \quad \hbox{ in }C^{\infty}_{loc}(\R^2)
\end{equation}
where $U(x)$ is the unique solution to the Liouville problem 
\[\begin{cases}
-\Delta U=e^U & \text{ in }\R^2\\
\int_ {\R^2} e^U <\infty.
\end{cases}\]
Moreover (see \cite{liy}), it holds that
\begin{equation}\label{2.6a}
\big| \tu_{j,n}(x)- U(x)\big|\leq C, \quad \forall x \in B_{\frac{R}{\d_{j,n}}}(0)
\end{equation}
with a constant $C>0$.
Finally, using \eqref{2.5} and \eqref{2.6b}, the following estimate holds 
 (see (6.7) in \cite{cli2} with $h(x)=1$ in $\Omega$ and  ${\bm{\lambda_k}}=\underbrace{u_n(x_{j,n})+\log \lambda_n+\log 8\pi m}_{=-\log\l_n+O(1)}$ for some $j$). 
\begin{equation}\label{stima2}
\lambda_n\int _{B_R(x_{j,n})}e^{ u_{n}(y)}dy=8\pi+O\big(\l_n\big).
\end{equation}
We end this section recalling the following nondegeneracy result (see Theorem 1.2 in \cite{grohsu} and \cite{gg1} for $m=1$). It will be useful in the proof of Theorem \ref{ex}.
\begin{theorem}\label{T5}
Let $u_n$
be  a sequence of solutions to \eqref{1} which blows-up at $\{P_1,\dots,P_m\}$ as $ n\to \infty$.
Suppose that $(P_1,\dots,P_m)$ is a nondegenerate critical point of the Kirchhoff-Routh function $\mathcal{KR}$.  Then $u_n$ is nondegenerate, i.e. the linearized problem
\begin{equation}
\label{l1}
            \left\{\begin{array}{lc}
                        -\Delta v=\l_ne^{u_n}v  &
            \mbox{  in }\Om\\
              v=0 & \mbox{ on }\partial \Om,
                      \end{array}
                \right.
\end{equation}
admits only the trivial solution $v\equiv0$ for $n$ large enough.
\end{theorem}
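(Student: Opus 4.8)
The plan is to argue by contradiction, combining a two--scale blow--up analysis of the linearised equation \eqref{l1} with the rigidity coming from the non--degeneracy of $\mathcal{KR}_\Omega$ at $(P_1,\dots,P_m)$. So I would suppose that, along a sequence $\ln\to0$, problem \eqref{l1} admits a nontrivial solution $v_n$, normalised so that $\|v_n\|_{L^\infty(\Om)}=1$. The first step is the inner analysis at the concentration scale: for $j=1,\dots,m$ I would look at $\tilde v_{j,n}(x):=v_n(\d_{j,n}x+x_{j,n})$ on $B_{R/\d_{j,n}}(0)$. By \eqref{2.5} it is bounded by $1$ and solves $-\Delta\tilde v_{j,n}=e^{\tilde u_{j,n}}\tilde v_{j,n}$; using \eqref{2.6}, \eqref{2.6a} and standard elliptic estimates it converges in $C^1_{loc}(\R^2)$ to a bounded solution $\phi_j$ of the linearised Liouville equation $-\Delta\phi=e^U\phi$ in $\R^2$. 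By the classification of its bounded solutions, $\phi_j=c_{j,0}\psi_0+c_{j,1}\psi_1+c_{j,2}\psi_2$, where $\psi_0=\frac{1-|x|^2/8}{1+|x|^2/8}$ generates dilations and $\psi_i=\frac{x_i}{1+|x|^2/8}$, $i=1,2$, generate translations.

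Next I would kill the dilation coefficients and identify the outer limit. Applying a Pohozaev--type identity to the pair $(u_n,v_n)$ on each $B_\rho(P_j)$, together with the sharp mass expansion \eqref{stima2} of $\ln\int_{B_R(x_{j,n})}e^{u_n}$, should force $c_{j,0}=0$ for every $j$; heuristically the mass $8\pi$ at each point is rigid and cannot be deformed at first order. Since $\ln e^{u_n}$ concentrates on $\S$, $v_n$ converges in $C^1_{loc}(\overline\Om\setminus\S)$ to a limit $v_0$, harmonic off $\S$, with $v_0=0$ on $\partial\Om$ and at worst a controlled singularity at each $P_j$ dictated by the translation coefficients $c_{j,i}$. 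Matching the inner and outer expansions across the neck region $\{\d_{j,n}\bar R\le|x-P_j|\le\rho\}$ should give
\[
v_0(x)=\sum_{j=1}^m\sum_{i=1}^2 b_{j,i}\,\partial_{y_i}G(x,P_j),
\]
with $b_{j,i}$ a nonzero multiple of $c_{j,i}$. Inserting this ansatz back into the equation and collecting the regular parts (which involve $H$ and its second derivatives $\partial_{y_i}\partial_{y_h}H$) yields a homogeneous linear system $\mathcal{A}\,\mathbf b=0$ for the vector $\mathbf b=(b_{j,i})$ whose $2m\times2m$ matrix $\mathcal{A}$ is a nonzero multiple of the Hessian $D^2\mathcal{KR}_\Omega(P_1,\dots,P_m)$. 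By hypothesis this Hessian is invertible, so $\mathbf b=0$, hence all $c_{j,i}=0$ and $v_0\equiv0$.

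To close the argument I would now derive a contradiction with $\|v_n\|_{L^\infty(\Om)}=1$. Since all inner limits $\phi_j$ and the outer limit $v_0$ vanish, the maximum of $|v_n|$ is attained neither in a fixed neighbourhood of $\S$ nor in $\overline\Om\setminus\cup_j B_\rho(P_j)$; a refined Harnack/maximum--principle estimate on dyadic annuli in the neck region, using the $C^1$ asymptotics of $u_n$ recalled in Section \ref{S2} and the decay $|\psi_i(x)|\le C|x|^{-1}$, then gives $\|v_n\|_{L^\infty(\Om)}\to0$, a contradiction. Hence \eqref{l1} has only the trivial solution for $n$ large.

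I expect the main obstacle to be the middle step: controlling $v_n$ uniformly over the whole range of scales between $\d_{j,n}$ and $\rho$ --- precisely the annular region where no off--the--shelf estimate on $u_n$ is available --- and tracking how the second--order terms in the inner expansion feed the regular part $H$ of the Green function, so as to recognise $D^2\mathcal{KR}_\Omega$ as the matrix of the limiting linear system. The Liouville classification and the Pohozaev computation that annihilates the dilation parameters are the other technical ingredients, but these are by now fairly standard.
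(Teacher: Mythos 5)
The paper itself offers no proof of Theorem \ref{T5}: it is explicitly recalled from the literature, the proof being that of Theorem 1.2 in \cite{grohsu} (and of \cite{gg1} for $m=1$). Your outline follows the same strategy as that reference: contradiction with a normalized element $v_n$ of the kernel, inner rescaling to the linearized Liouville operator and the classification of its bounded kernel into one dilation and two translation modes, Pohozaev-type identities to kill the dilation coefficients, and an outer harmonic limit of the form $\sum_{j,i}b_{j,i}\,\partial_{y_i}G(\cdot,P_j)$ whose coefficient vector is annihilated by a matrix proportional to $D^2\mathcal{KR}_\Omega(P_1,\dots,P_m)$. So the architecture is the right one.

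As a proof, however, the proposal has genuine gaps: every step that carries the real difficulty is asserted rather than established. First, the claim that a Pohozaev identity ``should force $c_{j,0}=0$'' needs the local mass $\lambda_n\int_{B_\rho(P_j)}e^{u_n}$ expanded to an order finer than \eqref{stima2}, together with a quantitative refinement of \eqref{2.6a}; without that, the dilation mode is not excluded. Second, the identification of the $2m\times 2m$ matrix $\mathcal{A}$ with a nonzero multiple of the Hessian of $\mathcal{KR}_\Omega$ is the heart of the theorem and is precisely where the nondegeneracy hypothesis enters; it requires a second-order expansion of both $u_n$ and $v_n$ near each $P_j$ and careful bookkeeping of the regular parts $H$ and of the cross terms $G(P_j,P_h)$, none of which is indicated. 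Third, the final contradiction with $\|v_n\|_{L^\infty(\Omega)}=1$ requires a uniform estimate of $v_n$ over the whole range of intermediate scales $\delta_{j,n}\ll|x-x_{j,n}|\ll\rho$; Section \ref{S2} of the paper develops such neck estimates for $u_n$ via the Green representation formula, but the analogous statement for solutions of \eqref{l1} is a separate and nontrivial piece of work that the proposal only gestures at. In short: correct strategy, consistent with the cited source, but the three decisive estimates are left as expectations, so this is a plan for the proof in \cite{grohsu} rather than a proof.
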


\subsection{Remarks on differential topology}\label{ss2}

We start this section recalling the celebrated Poincaré-Hopf Theorem which we state in the particular case where $\Omega$ is a bounded domain of $\R^N$. Note that if $v$ is a smooth vector field we denote by $index_{z_j}(v)=deg (v, B_{\delta}(z_j),0)$ the Brower Degree of the vector field $v$ in the ball $B_{\delta}(z_j)$ for some $\delta$ small fixed.
 
\begin{theorem}[Poincaré-Hopf Theorem]\label{teo-hopf}
Let $\Omega \subset \R^N$, with $N\geq 2$ be a smooth bounded domain.  Let $v$ be a smooth vector field on $\Omega$ with isolated zeroes $z_1,\dots,z_l$ and such that $v(x)\cdot \nu<0$ for any $x\in \partial \Omega$ (here $\nu$ is the outward normal vector to $\partial \Omega$). Then we have the formula
\[\sum_{j=1}^l index_{x_j}(v)=(-1)^N\chi(\Omega)\]
where $index_{x_j}(v)=deg (v, B_\delta (x_j),0)$ is the Brower degree of the vector field $v$  in the ball $B_\delta(x_j)$ with a small fixed $\delta>0$  and $\chi(\Omega)$ is the Euler characteristic of $\Omega$.
\end{theorem}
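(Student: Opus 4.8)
The plan is to combine the standard properties of the Brouwer degree with the Morse‑theoretic description of the Euler characteristic, reducing everything to a computation for one conveniently chosen vector field. First I would note that since $v(x)\cdot\nu<0$ for $x\in\partial\Omega$, in particular $0\notin v(\partial\Omega)$, so the Brouwer degree $deg(v,\Omega,0)$ is well defined. The zeroes $z_1,\dots,z_l$ being finitely many and all interior, the excision and additivity properties of the degree give
\[deg(v,\Omega,0)=\sum_{j=1}^l deg\big(v,B_\delta(z_j),0\big)=\sum_{j=1}^l index_{z_j}(v)\]
for $\delta>0$ small. Next I would show that this number does not depend on the admissible field: if $v_0,v_1$ are smooth on $\overline\Omega$ with $v_i\cdot\nu<0$ on $\partial\Omega$, the linear homotopy $v_t=(1-t)v_0+tv_1$ still satisfies $v_t\cdot\nu<0$ on $\partial\Omega$, hence $0\notin v_t(\partial\Omega)$ for every $t\in[0,1]$, and homotopy invariance of the degree yields $deg(v_0,\Omega,0)=deg(v_1,\Omega,0)$.

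It then suffices to exhibit one good field and evaluate. I would take a Morse function $f\in C^\infty(\overline\Omega)$ having $\partial\Omega$ as a regular level set $\{f=c\}$ with $\nabla f$ pointing outward along $\partial\Omega$ — such an $f$ exists, starting from a boundary defining function (no critical points near $\partial\Omega$, with the right sign) and perturbing it in the interior to be Morse. Then $v:=-\nabla f$ is admissible and its zeroes are exactly the critical points of $f$. Near a nondegenerate critical point $p$ of Morse index $\mu(p)$ one has $v(x)=-D^2f(p)\,(x-p)+o(|x-p|)$, whence
\[index_p(v)=\mathrm{sgn}\,\det\big(-D^2f(p)\big)=(-1)^N(-1)^{\mu(p)}.\]
Summing over $p$ and invoking the Morse identity $\sum_p(-1)^{\mu(p)}=\chi(\Omega)$ — legitimate here because $f$ is constant on $\partial\Omega$ with outward gradient, so the sublevel sets $\{f\le t\}$ reconstruct $\overline\Omega$ by attaching a $\mu(p)$‑handle at each critical level — gives
\[\sum_{j=1}^l index_{z_j}(v)=(-1)^N\sum_p(-1)^{\mu(p)}=(-1)^N\chi(\Omega),\]
and the first paragraph propagates this value to every admissible $v$, which is the assertion.

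The degree‑theoretic steps and the local index computation are routine. The hard part will be the Morse‑theoretic input: the existence of a Morse function adapted to the boundary and the identity $\sum_p(-1)^{\mu(p)}=\chi(\Omega)$ for a manifold with boundary whose gradient points outward. This is classical — one may appeal to Morse theory for manifolds with boundary, or double $\Omega$ across $\partial\Omega$ and extend $f$, or replace this step by a purely combinatorial argument on a smooth triangulation of $\overline\Omega$ using the associated ``barycentric'' vector field, whose indices encode the simplices of the triangulation — so in the paper I would simply cite a standard reference for it rather than reprove it.
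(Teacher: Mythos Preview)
Your argument is correct and follows a standard route to the Poincar\'e--Hopf formula: reduce to a single admissible field via homotopy invariance of the Brouwer degree, then evaluate on $-\nabla f$ for a Morse function adapted to the boundary and invoke the Morse relation $\sum_p(-1)^{\mu(p)}=\chi(\Omega)$. The local index computation and the degree-theoretic reductions are unobjectionable; your caveat about the Morse input being the nontrivial step is well placed.

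You should be aware, however, that the paper does \emph{not} prove this theorem. It is merely recalled as a classical result at the start of Section~\ref{ss2} (``We start this section recalling the celebrated Poincar\'e--Hopf Theorem\ldots''), with no proof or even a sketch; the authors use it as a black box in Proposition~\ref{prop-preuguali}. So there is nothing to compare your approach to: you have supplied a genuine proof outline where the paper simply cites the statement. If your intent was to match the paper, the appropriate move is to state the theorem and defer to a reference; if your intent was to make the paper more self-contained, your sketch is a reasonable way to do so, though in that case you would want to pin down a precise citation for the Morse-theoretic identity on manifolds with boundary (e.g.\ Milnor's \emph{Morse Theory} together with a boundary-adapted construction, or the treatment in Hirsch's \emph{Differential Topology}).
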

When $v=\nabla u_n$ this Theorem gives a link between an analytic problem (such as to compute the critical points of solutions to \eqref{1}) and a topological invariant such as $\chi(\Omega)$ that describes the structure of $\Omega$. The Euler characteristic is intrinsic of the manifold or of the domain and it is independent on the triangulation that one can use to reconstruct a manifold in the study of imaging. 

Another result which plays a crucial role in our computation is the following one, (see \cite{am})
\begin{theorem}\label{AM}
Let $\Om$ be a bounded open set in the plane and let its
boundary $\de\Om$ be composed of $N$ simple closed curves $\G_1,..,\G_N$, $N\ge1$ of
class $C^{1,\a}$. Consider the solution $u\in C^1(\bar\Om)\cap C^2(\Om)$ of the Dirichlet problem
$$
\begin{cases}
\D u=0&in\ \Om\\
u=a_i&on\ \de\Om
\end{cases}
$$
where $a_1, .. , a_N$ are given constants. If $a_1, .. , a_N$ do not all coincide, then $u$ has isolated critical points
$z_1, .. , z_k$ in $\bar\Om$, with finite multiplicities $m_1, .. , m_k$, respectively, and the following identity holds:
$$\sum_{z_k\in\Om}m_k+\frac12\sum_{z_k\in\de\Om}m_k=N-2
$$
\end{theorem}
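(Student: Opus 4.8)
The plan is to recast the statement as a count of zeros of a holomorphic function and then to evaluate the relevant winding numbers by exploiting the Dirichlet condition. First I would set $\phi:=u_x-iu_y$; since $u$ is harmonic, $\tfrac12(\partial_x+i\partial_y)\phi=\tfrac12\Delta u=0$, so $\phi$ is holomorphic in $\Om$. Because $a_1,\dots,a_N$ are not all equal and $\Om$ is connected, $u$ is not constant, hence $\phi\not\equiv 0$ and its zeros are isolated and of finite order. The zeros of $\phi$ in $\Om$ are precisely the interior critical points of $u$, the order of the zero being the multiplicity $m_j$.

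Next I would record the boundary behaviour. On $\G_i$ the relation $u\equiv a_i$ gives $\partial_T u=0$, so $\nabla u$ is orthogonal to $\G_i$ wherever it does not vanish; moreover $\nabla u$ cannot vanish along a whole sub-arc of $\G_i$, since after conformally straightening the $C^{1,\a}$ curve $\G_i$ and reflecting $u-a_i$ oddly across the straightened piece (Schwarz reflection), $u$ would otherwise be forced to be constant. Hence the critical points of $u$ on $\de\Om$ are finitely many, and near each such $z_k$ the straightened-and-reflected picture exhibits $\phi$, up to the nowhere-vanishing factor coming from the conformal change of variables, as a holomorphic function with a zero of order $m_k$ at $z_k$; this is the boundary multiplicity $m_k$ of the statement. (Here one uses that, for a $C^{1,\a}$ Jordan curve, the conformal straightening has a Hölder-continuous, non-vanishing derivative up to the boundary.)

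Then I would run the argument principle. Orient $\de\Om$ positively (the outer curve counterclockwise, the $N-1$ hole-boundaries clockwise) and indent it, inside $\Om$, by circular arcs of radius $r$ around each boundary critical point; call $\Gamma^r$ the resulting contour. Since $\phi$ is holomorphic in $\Om$, extends continuously to $\de\Om$, and is non-zero on $\Gamma^r$, for $r$ small one has $\sum_{z_j\in\Om}m_j=\frac1{2\pi}\,\Delta_{\Gamma^r}\arg\phi$. On the regular part of $\G_i$ one has $\arg\phi=-\arg\nabla u\equiv-\theta_n\ (\mathrm{mod}\ \pi)$, where $\theta_n$ is the argument of the normal, and between two consecutive boundary critical points $\nabla u/|\nabla u|$ equals either $+n$ throughout or $-n$ throughout, so there $\Delta\arg\phi=-\Delta\theta_n$; as $r\to 0$ these arcs exhaust $\G_i$, and the turning–tangents (Umlauf) theorem gives $\Delta_{\G_i}\theta_n=+2\pi$ on the outer curve and $-2\pi$ on each hole-boundary. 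On the $r$-indentation around $z_k$, $\phi$ behaves like $c\,(z-z_k)^{m_k}$ (in straightened coordinates; the extra conformal factor contributes $o(1)$ to the argument), and the arc sweeps $\arg(z-z_k)$ by $-\pi$, contributing $-\pi m_k$. Consequently $\Delta_{\G_i}\arg\phi\to -2\pi-\pi\sum_{z_k\in\G_i}m_k$ on the outer curve and $+2\pi-\pi\sum_{z_k\in\G_i}m_k$ on each hole.

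Finally I would sum over the $N$ boundary components: the turning contributions add up to $-2\pi+(N-1)\,2\pi=2\pi(N-2)$, and therefore
\[
\sum_{z_j\in\Om}m_j=(N-2)-\tfrac12\sum_{z_k\in\de\Om}m_k,
\]
which is the claimed identity (equivalently $\sum_{z_j\in\Om}m_j+\tfrac12\sum_{z_k\in\de\Om}m_k=N-2=-\chi(\Om)$). The main obstacle is the bookkeeping at the boundary critical points: one must justify the Schwarz reflection under the mere $C^{1,\a}$ regularity of $\de\Om$, check that each boundary critical point is counted with exactly half its multiplicity, and verify that, across each $z_k$, the splitting of $\arg\phi$ into the normal-direction part on the regular arcs and the local power-law part on the indentation is consistent.
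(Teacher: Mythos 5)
Your proposal is correct in substance. Note first that the paper does not prove this statement at all: Theorem \ref{AM} is imported verbatim from the cited reference \cite{am} (Alessandrini--Magnanini), so there is no internal proof to compare against. Your argument-principle proof via the holomorphic function $\phi=u_x-iu_y$ --- interior zeros counted by order, Kellogg-type conformal straightening plus odd reflection to control boundary zeros, an indented contour, and the identification of $\arg\phi$ with $-\theta_n$ modulo $\pi$ on the regular boundary arcs so that the turning of the normal contributes $2\pi(N-2)$ in total --- is essentially the original argument of Alessandrini and Magnanini, and the bookkeeping you describe (each boundary zero contributing $-\pi m_k$ through its semicircular indentation, hence the factor $\tfrac12$) closes correctly. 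Two small points worth making explicit: the reflection across each $\Gamma_i$ also shows that interior critical points cannot accumulate at the boundary, which is what makes the total count finite; and the consistency check you flag at the end is automatic, since near a boundary zero $\phi\sim c(z-z_k)^{m_k}$ in straightened coordinates, so the jump of the branch $\pm n$ across $z_k$ (a flip exactly when $m_k$ is odd) is already contained in the $-\pi m_k$ swept by the indentation.
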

\begin{remark}\label{RAM}
The multiplicity $m_k$ of a critical point $P=(x_0,y_0)$ of an analytic function $u:\Om\subset\R^2\to\R$ can be easily defined using complex coordinates. Indeed if $z=x+iy$ (and $z_0=x_0+iy_0$) we have that $m_k$ is defined by the following representation formula,
\begin{equation}
\partial _z u(z)=(z-z_0)^{m_k}g(z)
\end{equation}
with $g$ analytic and $g(z_0)\ne0$. 
\vskip0.2cm
Moreover (see pag.569 in \cite{am}) the following formula holds:
\begin{equation}
index_z(\nabla u) =-m_k.
\end{equation}
A consequence is that the index of a critical point of a harmonic satisfies
\begin{equation}
index_z(\nabla u)\le-1.
\end{equation}
Finally if  $index_z(\nabla u)=-1$ (and then $m_k=1$ and
there exists a coordinate system in which $z_0$ and the function $u(z)$ can be written as $\partial _z u(z)=a(z-z_0)+o(|z-z_0|^2)$ in a neighborhood of $z_0$. This implies that $z_0$ is a nondegenerate saddle point.
\end{remark}
\vskip0.2cm
We end this section stating a result on the number of critical points of solutions in domains with small holes.  
\begin{theorem}[see \cite{grlu}]\label{th1-1}
Let $\Omega $ be a smooth bounded set of $\R^2$ with $x_0\in \Omega$.
Suppose that $v_\d$ is a positive solution to
\begin{equation}\label{v-delta}
            \left\{\begin{array}{lc}
                        -\Delta v_\d=\l e^{v_\d}&
            \mbox{  in }\Omega\setminus B_\d(x_0)\\
              v_\d=0 & \mbox{ on }\partial \Omega \cup \partial B_\d(x_0)
                      \end{array}
                \right.
\end{equation}
which verifies, for some constant $C$ independent od $\d$,
\begin{equation}\label{limcr}
         v_\d \le C, \text{ in }\Omega\setminus B_\d(x_0).
\end{equation}
Denoting by $v_0$ the  weak limit of $v_\d$ as $\d\to 0$ we get that, if $x_0$ is not a critical point of $v_0$
and all critical points of $v_0$ are nondegenerate, 
\begin{equation}\label{mainresult}
\sharp\{\hbox{critical points of $v_\d$ in $\Omega\setminus B_\d(x_0)$}\}=\sharp\{\hbox{critical points of $v_0$ in $\Omega$}\}+1.
\end{equation}
Finally the additional critical point $x_\d$ of $v_\d$ is a saddle point of index $-1$.
\end{theorem}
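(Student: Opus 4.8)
The plan is to combine a convergence argument far from the hole with a sharp local analysis of $v_\d$ near $x_0$, the latter producing the single extra critical point, which is then classified through its leading harmonic profile together with the Poincar\'e--Hopf formula (Theorem \ref{teo-hopf}). \emph{Step 1 (away from the hole).} Since $\{v_\d\}$ is bounded uniformly in $\d$, the right-hand side $\l e^{v_\d}$ is also uniformly bounded, so interior and boundary elliptic estimates give $v_\d\to v_0$ in $C^2(K)$ for every compact $K\subset\overline\Om\setminus\{x_0\}$. Hence $v_0$ solves $-\D v_0=\l e^{v_0}$ in $\Om\setminus\{x_0\}$ with $v_0=0$ on $\de\Om$, and boundedness of $v_0$ makes $x_0$ a removable singularity, so $v_0\in C^2(\Om)$; by the strong maximum principle $v_0>0$ in $\Om$. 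By hypothesis $\na v_0(x_0)\ne0$, so there is $r_0>0$ with $|\na v_0|>0$ on $\overline{B_{r_0}(x_0)}$; in particular all critical points of $v_0$ lie in $\Om\setminus B_{r_0}(x_0)$ and, by assumption, are nondegenerate. For each fixed $r\in(0,r_0)$, a degree/implicit-function argument then yields, for $\d$ small, a one-to-one index-preserving correspondence between the critical points of $v_\d$ in $\Om\setminus B_r(x_0)$ and the critical points of $v_0$ in $\Om$. Finally, by the Hopf lemma $\na v_\d\cdot\nu<0$ on $\de(\Om\setminus B_\d(x_0))$ ($\nu$ the outer normal), so Theorem \ref{teo-hopf} applies to $v_\d$ on $\Om\setminus B_\d(x_0)$ and to $v_0$ on $\Om$.

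\emph{Step 2 (behaviour near $x_0$).} Fix $r\in(0,r_0)$ and set $y=x-x_0$, $a=\na v_0(x_0)\ne0$, $L=\log(r/\d)\to\infty$. The crux is a $C^1$ expansion in the degenerating annulus $A_{\d,r}:=B_r(x_0)\setminus\overline{B_\d(x_0)}$,
$$v_\d(x)=v_0(x)-v_0(x_0)\,\frac{\log(r/|y|)}{L}+E_\d(x),$$
that is, $v_\d$ is $v_0$ corrected by the radial harmonic function vanishing on $\de B_r(x_0)$ and equal to $-v_0(x_0)+o(1)$ on $\de B_\d(x_0)$ (consistent with $v_\d\equiv0$ there and $v_0(x)\to v_0(x_0)>0$), with an error $E_\d$ whose gradient is negligible, relative to the two leading terms, at the scale where they balance. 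One gets this by writing $v_\d=\tilde h_\d+\ell_\d+w_\d$ with $\ell_\d$ the radial correction, $\tilde h_\d$ harmonic in $B_r(x_0)$ matching $v_\d$ on $\de B_r(x_0)$ (hence $\tilde h_\d=v_0+O(r)$ in $C^1$), and $w_\d$ — which absorbs the Newtonian potential on $A_{\d,r}$ of the bounded datum $\l(e^{v_\d}-e^{v_0})$ together with residual boundary corrections — estimated by barriers; all three contribute $O(r)$ to the error after $\d\to0$, hence are negligible once $r$ is small. Separately, rescaling $v_\d$ around $x_0$ at scale $\d$ and using that $\frac{L}{v_0(x_0)}v_\d(x_0+\d z)\to\log|z|$ in $C^1_{loc}(\{|z|\ge1\})$ rules out critical points of $v_\d$ in $\{\d\le|y|\le A\d\}$ for every fixed $A$.

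\emph{Step 3 (the extra critical point and its index).} By Step 2, for $y\in A_{\d,r}$,
$$\na v_\d(x)=a+\frac{v_0(x_0)}{L}\,\frac{y}{|y|^2}+(\text{lower order}),$$
which is $\approx\na v_0\ne0$ for $|y|$ of order $r$ and dominated by the nonvanishing radial term for $A\d\le|y|\ll 1/L$; a zero can occur only where $\frac{v_0(x_0)}{L|y|}\sim|a|$, i.e. near $|y|=t_\d:=\frac{v_0(x_0)}{|a|L}$, and there $a+\frac{v_0(x_0)}{L}\frac{y}{|y|^2}=0$ forces $y$ antiparallel to $a$ and $|y|=t_\d(1+o(1))$. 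A degree/implicit-function argument upgrades this to: exactly one critical point $x_\d$ of $v_\d$ in $A_{\d,r}$, with
$$x_\d=x_0-\frac{v_0(x_0)}{\log(r/\d)\,|\na v_0(x_0)|^2}\,\na v_0(x_0)+o\!\left(\frac{1}{\log(1/\d)}\right),$$
so that $|x_\d-x_0|\to0$ while $|x_\d-x_0|/\d\to\infty$. Near $x_\d$ the leading part of $v_\d$ is the harmonic function $y\mapsto a\cdot y+\frac{v_0(x_0)}{L}\log|y|$, whose Hessian at its critical point equals $\frac{L|a|^2}{v_0(x_0)}\bigl(\mathrm{Id}-2\,\hat a\otimes\hat a\bigr)$ with $\hat a=a/|a|$, having one positive and one negative eigenvalue; hence $x_\d$ is a nondegenerate saddle with $index_{x_\d}(\na v_\d)=-1$. (Equivalently, by Remark \ref{RAM}: once uniqueness in $A_{\d,r}$ is known, Theorem \ref{teo-hopf} gives $\sum index=\chi(\Om\setminus B_\d(x_0))=\chi(\Om)-1$ for $v_\d$ and $\sum index=\chi(\Om)$ for $v_0$, which force $index_{x_\d}(\na v_\d)=-1$.) Combining Steps 1 and 3 proves \eqref{mainresult}, the nondegeneracy of all critical points of $v_\d$, and that $x_\d$ is a saddle of index $-1$.

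\emph{Main obstacle.} The delicate point is the uniform control of the error term in Step 2 throughout the degenerating annulus $A_{\d,r}$ — most of all near $\de B_\d(x_0)$, where the radial term itself blows up and a separate rescaled (Liouville-type) argument is needed — so as to locate and classify the critical point at the intermediate scale $|x-x_0|\sim 1/\log(1/\d)$. The remaining ingredients (elliptic convergence, degree theory, Poincar\'e--Hopf) are comparatively soft.
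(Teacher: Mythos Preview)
The paper does not contain a proof of this theorem: the statement is quoted from an external reference \cite{grlu}, and the text moves on immediately to Lemma \ref{lemma-piccolezza} without supplying any argument. So there is no ``paper's own proof'' to compare your proposal against.

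That said, your sketch is along the lines one expects for such a result and is internally coherent. The three-step structure --- $C^2$-convergence and degree matching outside a fixed neighbourhood of $x_0$; an expansion $v_\d\approx v_0+\big(\hbox{radial harmonic correction forced by }v_\d|_{\partial B_\d}=0\big)$ in the degenerating annulus; identification of the unique extra zero of the gradient at the intermediate scale $|x-x_0|\sim v_0(x_0)/(|\nabla v_0(x_0)|\log(1/\d))$ --- is the natural mechanism, and the Poincar\'e--Hopf balance $\chi(\Om\setminus B_\d(x_0))=\chi(\Om)-1$ pins down the index once uniqueness is established. The point you flag as the main obstacle really is the main obstacle: the $C^1$-control of the error $E_\d$ \emph{uniformly across all scales} $\d\le|y|\le r$ (in particular near $\partial B_\d(x_0)$, where barrier arguments for the remainder must beat the $1/(\d L)$ size of the radial gradient) is what needs to be written out carefully, and your description of how to decompose $E_\d$ is suggestive rather than complete. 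If you want this to stand as a proof rather than a heuristic, that is where the work remains; the rest is standard.
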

In order to verify the assumption \eqref{limcr} we will use the following result,
{
\begin{lemma}\label{lemma-piccolezza}
Suppose that $u_\d$ is a family of solutions to \eqref{v-delta} in $\Omega\setminus B_\d(x_0)$. Assume $B_\d(x_0)\subset B_R(x_0) \subset \Omega$ and 
$$u_\d\leq C_{R}\hbox{ on }\partial B_R(x_0)$$
where $C_R$ is a constant independent of $\d$. Extending
$u_\d$ to zero in $B_\d(x_0)$ suppose that, for every $\d$,
\begin{equation}\label{epsilon-zero}
\l\int_{B_R(x_0)} e^{u_\d} dx\leq \e_0<4\pi.
\end{equation}
Then $\|u_\d\|_{L^{\infty}(B_R(x_0))}\leq C$ uniformly with respect to $\d$.
\end{lemma}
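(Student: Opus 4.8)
The plan is to localize everything to the annulus $A_\d:=B_R(x_0)\setminus\overline{B_\d(x_0)}$, whose diameter is at most $2R$ for every $\d$, and to combine the Brezis--Merle inequality with a Green's function representation. First I would write, on $A_\d$, the splitting $u_\d=v_\d+h_\d$, where $v_\d$ solves the Dirichlet problem
\[
-\D v_\d=\l e^{u_\d}\ \text{ in }A_\d,\qquad v_\d=0\ \text{ on }\de A_\d=\de B_R(x_0)\cup\de B_\d(x_0),
\]
and $h_\d:=u_\d-v_\d$ is harmonic in $A_\d$. Since $-\D u_\d=\l e^{u_\d}>0$ and $u_\d=0$ on $\de B_\d(x_0)$, the maximum principle gives $u_\d\ge 0$; hence $v_\d\ge 0$ (its datum vanishes and its right-hand side is nonnegative), while $h_\d$ is harmonic with boundary values $u_\d|_{\de B_\d(x_0)}=0$ and $u_\d|_{\de B_R(x_0)}\in[0,C_R]$, so $0\le h_\d\le C_R$ on $A_\d$.

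The core step is a uniform integrability gain for $v_\d$. Put $\mu_\d:=\l\int_{A_\d}e^{u_\d}\,dx$, so that, by \eqref{epsilon-zero}, $\mu_\d\le \l\int_{B_R(x_0)}e^{u_\d}\,dx\le\e_0<4\pi$. Fixing the gap $\e:=\tfrac12(4\pi-\e_0)>0$, the Brezis--Merle inequality applied to $v_\d$ on $A_\d$ yields
\[
\int_{A_\d}\exp\!\Big(\frac{(4\pi-\e)\,v_\d}{\mu_\d}\Big)\,dx\le\frac{4\pi^{2}}{\e}\,\big(\operatorname{diam}A_\d\big)^{2}\le\frac{16\pi^{2}R^{2}}{\e}.
\]
Because $v_\d\ge 0$ and $\tfrac{4\pi-\e}{\mu_\d}\ge\tfrac{4\pi-\e}{\e_0}=:p_0>1$ (here $p_0>1$ precisely because $\e_0<4\pi$), this gives $\int_{A_\d}e^{p_0 v_\d}\,dx\le C_1(R,\e_0)$ uniformly in $\d$. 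Then $\l e^{u_\d}=\l e^{h_\d}e^{v_\d}\le \l e^{C_R}e^{v_\d}$, whence $\|\l e^{u_\d}\|_{L^{p_0}(A_\d)}\le C_2$ uniformly in $\d$, with $C_2=C_2(R,\e_0,C_R,\l)$.

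To upgrade this to an $L^\infty$ bound I would use the representation $v_\d(x)=\int_{A_\d}G_{A_\d}(x,y)\,\l e^{u_\d(y)}\,dy$ together with the kernel estimate
\[
0\le G_{A_\d}(x,y)\le G_{B_R(x_0)}(x,y)\le\frac{1}{2\pi}\log\frac{2R}{|x-y|}\qquad(x,y\in A_\d),
\]
the first inequality being domain monotonicity of the Green's function and the last one the maximum principle applied to its regular part. Since $\log\frac{2R}{|\cdot|}\in L^{p_0'}(B_{2R})$ with $p_0'$ the conjugate exponent of $p_0$, Hölder's inequality gives $0\le v_\d(x)\le C(R,p_0)\,\|\l e^{u_\d}\|_{L^{p_0}(A_\d)}\le C$ for every $x\in A_\d$, uniformly in $\d$. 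Therefore $u_\d=v_\d+h_\d\le C+C_R$ on $A_\d$, and since $u_\d\equiv 0$ on $B_\d(x_0)$ after the extension, we conclude $\|u_\d\|_{L^{\infty}(B_R(x_0))}\le C$ uniformly in $\d$.

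The argument is essentially a version of the Brezis--Merle alternative, so the only delicate points are bookkeeping ones. The first is that the inner boundary $\de B_\d(x_0)$ has curvature blowing up as $\d\to 0$, so global $W^{2,p_0}$ (or Schauder) estimates on $A_\d$ do not have $\d$-uniform constants; this is exactly why the last step is carried out through the explicit logarithmic kernel bound rather than by elliptic regularity. The second is that $\mu_\d$ may itself be small, so the Brezis--Merle gap $\e$ must be chosen from the fixed number $\e_0$ (not from $\mu_\d$), which is what makes the exponent $p_0>1$ independent of $\d$; the fact that the Brezis--Merle constant depends only on $\operatorname{diam}A_\d\le 2R$ and on $\e$ then makes every estimate uniform. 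Once these are settled the rest is routine.
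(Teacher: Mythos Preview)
Your proof is correct and follows the same Brezis--Merle strategy as the paper: use the mass bound $\e_0<4\pi$ to obtain $e^{u_\d}\in L^{p_0}$ for some fixed $p_0>1$, then upgrade to $L^\infty$ via a logarithmic kernel. The technical implementations differ slightly. The paper works with the Newtonian potential $\bar u_\d(x)=\frac1{2\pi}\int_{B_R(x_0)}\log\frac{2R}{|x-y|}\l e^{u_\d(y)}\,dy$ on the \emph{fixed} ball $B_R(x_0)$, proves $u_\d\le\bar u_\d+C_R$ by the maximum principle, derives the exponential integrability of $\bar u_\d$ from Jensen's inequality (i.e.\ reproves Brezis--Merle), and then invokes standard elliptic regularity for $\bar u_\d$ on the fixed domain. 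You instead take the Dirichlet split $u_\d=v_\d+h_\d$ on the \emph{moving} annulus $A_\d$, cite Brezis--Merle as a black box, and handle the final $L^\infty$ step through domain monotonicity $G_{A_\d}\le G_{B_R(x_0)}\le\frac1{2\pi}\log\frac{2R}{|x-y|}$. Your explicit remark that $W^{2,p}$ estimates on $A_\d$ would not be $\d$-uniform is well taken, and the Green's function monotonicity is exactly the right way to circumvent it; the paper's Newtonian-potential route sidesteps this issue by never solving a Dirichlet problem on $A_\d$ in the first place. Both routes are equally short and yield the same constants.
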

\begin{proof}
We adapt the proof of Corollary 3 and Theorem 1 in \cite{bremer} to our case where the domains are moving. For $x,y\in B(x_0,R)$ let us consider the function
\[\bar u_\d(x):=\frac 1{2\pi}\int_{B_R(x_0)}\log \frac {2R}{|x-y|}\left(\lambda e^{u_\d(y)}\right) dy\]
which solves
\begin{equation}\label{bar-u}
            \left\{\begin{array}{lc}
                        -\Delta \bar u_\d=\l e^{u_\d}&
            \mbox{  in }\R^2\\
              \bar u_\d\geq 0 & \mbox{ in } B_R(x_0).
                      \end{array}
                \right.
\end{equation}
The maximum principle, applied to the function $\bar u_\delta+C_{R}-u_\d$ in $B_R(x_0)\setminus B_\d(x_0)$, implies that 
\[u_\d\leq \bar u_\delta+C_{R} \ \ \text{ in }B_R(x_0).\]
The Jensen inequality (see the proof of Theorem 1 in \cite{bremer}) then gives, for every $0<\beta<4\pi$
\[e^{\int _{B_R(x_0)}\frac {\l e^{u_\d(y)}}{\|\l e^{u_{\d}}\|_{L^1(B_R(x_0))}}\frac {\beta}{2\pi} \log \left(\frac {2R}{|x-y|}\right) dy}\leq 
\int _{B_R(x_0)}\frac {\l e^{u_\d(y)}}{\|\l e^{u_{\d}}\|_{L^1(B_R(x_0))}}e^{\log \left(\frac {2R}{|x-y|}\right) ^{\frac \beta{2\pi}}}dy
\]
so that
\[e^{\frac \beta{\|\l e^{u_{\d}}\|_{L^1(B_R(x_0))}}\bar u_\d(x)}\leq \int _{B_R(x_0)}\frac {\l e^{u_\d(y)}}{\|\l e^{u_{\d}}\|_{L^1(B_R(x_0))}}\left(\frac {2R}{|x-y|}\right) ^{\frac \beta{2\pi}} dy
\]
which also implies,
\[e^{\frac \beta{\|\l e^{u_{\d}}\|_{L^1(B_R(x_0))}} u_\d(x)}\leq \int _{B_R(x_0)}\frac {\l e^{u_\d(y)}}{\|\l e^{u_{\d}}\|_{L^1(B_R(x_0))}}\left(\frac {2R}{|x-y|}\right) ^{\frac \beta{2\pi}} dy+e^{\frac \beta{\|\l e^{u_{\d}}\|_{L^1(B_R(x_0))}} C_{R}}.
\]
Integrating then
\[\begin{split}
\int _{B_R(x_0)}e^{\frac \beta{\|\l e^{u_{\d}}\|_{L^1(B_R(x_0))}} u_\d(x)}dx &\leq \int_{B_R(x_0)}dx \int _{B_R(x_0)}\frac {\l e^{u_\d(y)}}{\|\l e^{u_{\d}}\|_{L^1(B_R(x_0))}}\left(\frac {2R}{|x-y|}\right) ^{\frac \beta{2\pi}} dy\\
&+e^{\frac \beta{\|\l e^{u_{\d}}\|_{L^1(B_R(x_0))}} C_{R,\d}}\left| B_R\right|\\
&=\int_{B_R(x_0)} \frac {\l e^{u_\d(y)}}{\|\l e^{u_{\d}}\|_{L^1(B_R(x_0))}}dy \int _{B_R(x_0)}\left(\frac {2R}{|x-y|}\right) ^{\frac \beta{2\pi}} dx\\
&+(\pi R^2)e^{\frac \beta{\|\l e^{u_{\d}}\|_{L^1(B_R(x_0))}} C_{R}}.
\end{split}
\]
For $y\in B_R(x_0)$ we have that 
\[\int _{B_R(x_0)}\left(\frac {2R}{|x-y|}\right) ^{\frac \beta{2\pi}} dx\leq \int _{B_{2R}(y)}\left(\frac {2R}{|x-y|}\right) ^{\frac \beta{2\pi}} dx=\frac{8\pi R^2}{2-\frac \beta{2\pi}}.
\]
Then 
\[
\int _{B_R(x_0)}e^{\frac \beta{\|\l e^{u_{\d}}\|_{L^1(B_R(x_0))}} u_\d(x)}dx\leq \frac{8\pi R^2}{2-\frac \beta{2\pi}}+(\pi R^2)e^{\frac \beta{\|\l e^{u_{\d}}\|_{L^1(B_R(x_0))}} C_{R}}\leq C
\]
We choose $\beta$ such that $4\pi >\beta>\e_0$ so that $\frac \beta{\|\l e^{u_{\d}}\|_{L^1(B_R(x_0))}}>\frac \beta {\e_0}>1$. 
This means that $\lambda e^{u_\d}\in L^p$ for some $p>1$. 
Then we can apply the standard regularity theory to the function $\bar u_\d$ 
in \eqref{bar-u}, and we have that $\bar u_\d\in L^\infty(B_R(x_0))$, uniformly with respect to $\delta$. Then finally also $u_\d$ is uniformly bounded in $B_R(x_0)$.
\end{proof}
}

\sezione{An asymptotic estimate in an annular set that shrinks slowly }\label{S2}

From the known results in Section \ref{se:2}  we have good asymptotic for the solutions $u_n$ both in the blow-up balls $B_{\d_{i_n}\bar R}(x_{i,n})$ for every $\bar R>0$ (formulas \eqref{2.4}-\eqref{2.6a}) and in the set $\Om\setminus\cup_{j=1}^kB_R(P_j)$ for  any
$R>0$ sufficiently small (Theorem \ref{T0}).  It lasts to consider the case when a sequence of points $x_n\to P_i$ as $n\to \infty$ for some index $i\in \{1,\dots,m\}$ but $x_n$ do not belong to a ball of blow-up.  

This is the aim of this section, where we prove an asymptotic estimate in annuli centered at $x_{i,n}$ with arbitrary (infinitesimal) radii.

Before to give the precise statement, let us give an idea of the proof.

If we rescale the solutions $u_n$ near $x_{i,n}$ with respect to a parameter $r_n$ which is slower that $\delta_{i,n}$ in \eqref{2.5} we obtain, in the limit,  a problem that remains singular in the origin.  However, some properties of this "wrong" scaling will be useful for studying the properties of the solutions $u_n$ in points $x_n$ that converge to some $P_i$ but do no belong to $B_{\d_{i_n}\bar R}(x_{i,n})$ for every $\bar R$.
Here we include one of these results on this scaling of the solution $u_n$ that will be useful later. This idea has been already used in a previous paper \cite{gg1} in the case of a one-point blowing-up sequence of solutions $u_n$.

\begin{proposition}\label{prop-2.1}
Let $u_n$
be  a sequence of solutions to \eqref{1} which blows-up at $\{P_1,\dots,P_m\}$ as $ n\to \infty$ and let $r_n$  a sequence such that $r_n\to 0$ and 
\begin{equation}\label{b10}
\frac{r_n}{\delta_{i,n}}\to+\infty\quad\hbox{as }n\to \infty
\end{equation}
for every $i=1, \dots,m$, where $\delta_{i,n}$ are defined in \eqref{2.5}. 

Then,  for any $i=1,..,m$, and for any $0<\xi_1<\xi_2$, 
\begin{equation}\label{convergenza-vi}
u_n(x)=-4\log|x-x_{i,n}|+8\pi \mathcal{R}(P_i)+8\pi \sum_{j=1, j\neq i}^{m}G(P_i,P_j)+o(1)\quad\hbox{in }C^1_{loc}(A^i_n)
\end{equation}
as $n\to \infty$, where $A^i_n=\{\xi_1r_n\le|x-x_{i,n}|\le \xi_2r_n\}$ are shrinking annuli.
\end{proposition}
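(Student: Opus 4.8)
The strategy is to perform the "wrong" rescaling and use the known sharp $L^1$-estimate \eqref{stima2} together with a potential-theoretic (Green's representation) argument. Fix $i$ and set
\[
w_n(y) := u_n(r_n y + x_{i,n}) + 4\log r_n,
\qquad y \in \{\xi_1 \le |y| \le \xi_2\},
\]
which satisfies $-\Delta w_n = r_n^2 \lambda_n e^{u_n(r_n y + x_{i,n})} = \lambda_n e^{u_n} \, e^{w_n - 4\log r_n}\cdot r_n^2 \cdot$ — more precisely, $-\Delta w_n = \mu_n(y)$ where $\mu_n(y)\,dy$ is the pushforward of the measure $\lambda_n e^{u_n(x)}\,dx$ under the dilation. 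By \eqref{2} and \eqref{stima2} the total mass $\lambda_n\int_\Omega e^{u_n}\to 8\pi m$, and all of it concentrates, on the scale $r_n$, at the $m$ points $P_j$; since $r_n/\delta_{i,n}\to\infty$, in the $y$-variable centered at $x_{i,n}$ the blow-up mass at $P_i$ collapses to the single point $y=0$ with weight $8\pi$ (this is exactly \eqref{stima2}: $\lambda_n\int_{B_R(x_{i,n})}e^{u_n}=8\pi+O(\lambda_n)$), while the masses at the other $P_j$ escape to $y=\infty$. Thus $\mu_n \rightharpoonup 8\pi\,\delta_0$ weakly-$*$ in the annular region, and in fact $\mu_n\equiv 0$ on $\{\xi_1\le|y|\le\xi_2\}$ for $n$ large because that set contains no blow-up point.

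Next I would represent $u_n$ via the Green function of $\Omega$. Write, for $x$ in a fixed small ball $B_R(P_i)$ away from the other blow-up points,
\[
u_n(x) = \int_\Omega G(x,z)\,\lambda_n e^{u_n(z)}\,dz
= \int_{B_R(x_{i,n})} G(x,z)\,\lambda_n e^{u_n(z)}\,dz
+ \sum_{j\ne i}\int_{B_R(x_{j,n})} G(x,z)\,\lambda_n e^{u_n(z)}\,dz + o(1),
\]
where the $o(1)$ collects the contribution of the region away from all blow-up balls (there $\lambda_n e^{u_n}\to 0$ in $L^\infty$ by \eqref{2}, Theorem \ref{T0}). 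For the $j\ne i$ terms, $x$ ranges in the annulus $A_n^i$ which stays at distance $\sim r_n \to 0$ from $x_{i,n}$, hence at distance bounded below from $x_{j,n}$; since the mass near $P_j$ tends to $8\pi$ and concentrates at $P_j$, that term converges to $8\pi\,G(P_i,P_j)$ uniformly in $x\in A_n^i$ (with $C^1$ convergence, using that $G(\cdot,z)$ is smooth off the diagonal and $z\to P_j\ne P_i$). For the $i$-th term I split $G = \frac1{2\pi}\log|x-z|^{-1} + H(x,z)$: the regular part contributes $\int H(x,z)\,\lambda_n e^{u_n}\to 8\pi H(P_i,P_i) = 8\pi\,\mathcal R(P_i)$ uniformly on $A_n^i$ (again because $z\to P_i$, $x\to P_i$, $H$ continuous). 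The singular part contributes
\[
-\frac1{2\pi}\int_{B_R(x_{i,n})}\log|x-z|\,\lambda_n e^{u_n(z)}\,dz.
\]
Here is the heart of the matter: for $x\in A_n^i$ we have $|x-x_{i,n}|\sim r_n \gg \delta_{i,n}$, while the mass $\lambda_n e^{u_n(z)}\,dz$ lives essentially on $B_{C\delta_{i,n}}(x_{i,n})$ (the Liouville profile \eqref{2.6}, \eqref{2.6a} has integrable tail $\int_{|x|>T}e^U \to 0$) plus a small remainder. On that small ball $\log|x-z| = \log|x-x_{i,n}| + o(1)$ uniformly, so the singular part equals $-\frac1{2\pi}\big(8\pi + o(1)\big)\log|x-x_{i,n}| + o(1) = -4\log|x-x_{i,n}| + o(1)$. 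Combining the three pieces and recalling $u_n(x) = w_n(y) - 4\log r_n$ with $|x - x_{i,n}| = r_n|y|$, the $-4\log r_n$ cancels and we are left with
\[
u_n(x) = -4\log|x-x_{i,n}| + 8\pi\mathcal R(P_i) + 8\pi\sum_{j\ne i}G(P_i,P_j) + o(1),
\]
as claimed. The $C^1_{loc}$ upgrade is obtained by differentiating the Green representation: $\nabla_x u_n(x) = \int \nabla_x G(x,z)\,\lambda_n e^{u_n(z)}\,dz$, and the same splitting works since $\nabla_x\big(\frac1{2\pi}\log|x-z|^{-1}\big) = -\frac1{2\pi}\frac{x-z}{|x-z|^2}$ is, for $|x-x_{i,n}|\sim r_n$ and $|z-x_{i,n}|\lesssim\delta_{i,n}\ll r_n$, equal to $-\frac1{2\pi}\frac{x-x_{i,n}}{|x-x_{i,n}|^2}(1+o(1))$, giving $\nabla(-4\log|x-x_{i,n}|) + o(1/r_n)$ — and after the rescaling $\nabla_y w_n = r_n\nabla_x u_n$ the error is $o(1)$ in $y$, uniformly on $\{\xi_1\le|y|\le\xi_2\}$.

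**Main obstacle.** The delicate point is controlling the "tail" of the concentrating mass in the singular term: one must show that $\int_{B_R(x_{i,n})}\big(\log|x-z| - \log|x-x_{i,n}|\big)\lambda_n e^{u_n(z)}\,dz = o(1)$ uniformly for $x\in A_n^i$, i.e. that the portion of the $8\pi$ mass sitting at distance between $C\delta_{i,n}$ and $R$ from $x_{i,n}$ — where $\log|x-z|$ is no longer well-approximated by $\log r_n$ — carries vanishing mass. This requires a quantitative decay estimate on $\lambda_n e^{u_n}$ in the intermediate (unscaled) annulus $\{T\delta_{i,n}\le|z-x_{i,n}|\le R\}$, of the type $\lambda_n\int_{\{|z-x_{i,n}|\ge T\delta_{i,n}\}\cap B_R}e^{u_n}\le \varepsilon(T)$ with $\varepsilon(T)\to 0$, uniformly in $n$; this follows from \eqref{stima2} combined with \eqref{2.6a} (the uniform closeness to the Liouville bubble, whose mass outside $B_T(0)$ is $O(1/T^2)$), but making the logarithmic weight estimate genuinely uniform over the full range $\xi_1 r_n\le|x-x_{i,n}|\le\xi_2 r_n$ — and likewise for the gradient with its less favorable $|x-z|^{-1}$ weight — is where the real work lies. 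This is precisely the type of estimate referred to in the remark that this idea "has been already used in \cite{gg1}" for the one-bubble case, and the multi-bubble version here needs only the additional bookkeeping of the far-away masses at $P_j$, $j\ne i$, which is comparatively soft.
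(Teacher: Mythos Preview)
Your approach is precisely the paper's: Green representation for $u_n$, split the integral over $\Omega\setminus\cup_jB_R(x_{j,n})$ and over each $B_R(x_{j,n})$, treat $j\neq i$ by smoothness of $G$ off the diagonal, and for $j=i$ decompose $G=\tfrac{1}{2\pi}\log|x-z|^{-1}+H$; the gradient is handled by differentiating the representation and repeating. There is however one step where your write-up is not quite rigorous, and where the paper's organization differs in a way worth noting.

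You write the singular part as $-\tfrac{1}{2\pi}(8\pi+o(1))\log|x-x_{i,n}|+o(1)$ and conclude $-4\log|x-x_{i,n}|+o(1)$; but $\log|x-x_{i,n}|\sim\log r_n\to-\infty$, so the cross term $o(1)\cdot\log r_n$ is \emph{not} automatically $o(1)$. The paper avoids this by first factoring
\[
\log\frac{1}{|x-z|}=\log\frac{1}{r_n}+\log\frac{1}{\big|y-\tfrac{\delta_{i,n}}{r_n}\tilde z\big|},\qquad y=\tfrac{x-x_{i,n}}{r_n},\ \tilde z=\tfrac{z-x_{i,n}}{\delta_{i,n}}.
\]
The first piece contributes $\tfrac{1}{2\pi}\log r_n^{-1}\cdot\big(8\pi+O(\lambda_n)\big)$ via the \emph{sharp} mass estimate \eqref{stima2}, and $O(\lambda_n)\log r_n=o(1)$ because $r_n\ge\delta_{i,n}\sim\lambda_n^{1/2}$ forces $|\log r_n|=O(|\log\lambda_n|)$. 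The second piece now has a bounded (in $y$) limiting integrand, and is handled by splitting the $\tilde z$-domain into a ball of radius $\tfrac{\xi_1}{2}\tfrac{r_n}{\delta_{i,n}}$ around the logarithmic singularity $\tfrac{r_n}{\delta_{i,n}}y$---where \eqref{2.6a} gives $e^{\tilde u_{i,n}}\le C(\delta_{i,n}/r_n)^4$, overwhelming the integrable $\log$---and its complement, where dominated convergence with the $e^U$ bound applies. This is exactly what you flag as ``the main obstacle''; the point you did not make explicit is that it is the quantitative rate $O(\lambda_n)$ in \eqref{stima2}, not a mere $o(1)$, that allows the divergent factor $\log r_n$ to be absorbed.
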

\begin{proof}
We follow an idea already used in \cite{gg1} in the case $m=1$. For some index $i\in\{1,..,m\}$ let us consider the function $v_{i,n}:\Om_n=\frac{\Om -x_{i,n}}{r_n}$ (observe that  $\Omega_n\to \R^2$ as $n\to \infty$),
\begin{equation}\label{vi}
v_{i,n}(x):= u_n(r_nx+x_{i,n})+4\log r_n.
\end{equation}
In this setting the estimate \eqref{convergenza-vi} is equivalent to show that
\begin{equation}\label{3.5}
v_{i,n}( x)=4\log \frac1{|x|}+8\pi \mathcal{R}(P_i)+8\pi \sum_{j=1, j\neq i}^{m}G(P_i,P_j)+o(1)\quad\hbox{in }C^1_{loc}(\R^2\setminus\{0\}).
\end{equation}\begin{equation}\label{3.5}
v_{i,n}( x)=4\log \frac1{|x|}+8\pi \mathcal{R}(P_i)+8\pi \sum_{j=1, j\neq i}^{m}G(P_i,P_j)+o(1)\quad\hbox{in }C^1_{loc}(\R^2\setminus\{0\}).
\end{equation}
To prove \eqref{3.5} 
we use the Green representation formula and we get
\begin{equation}\label{Avi}
\begin{split}
&v_{i,n}( x)=\lambda_n \int_{\Omega} G(r_n x+x_{i,n},y) e^{u_n(y)}dy+4\log r_n=\\
=&\underbrace{\lambda_n\int_{\Omega\setminus\left(\cup_j B_R(x_{j,n})\right)}G(r_n  x+x_{i,n},y) e^{u_n(y)}dy}_{=I_{1,n}}+\underbrace{\lambda_n\sum_{j=1}^n \int_{B_ R(x_{j,n})} G(r_n x+x_{i,n},y) e^{u_n(y)}dy}_{=I_{2,n}}+4\log r_n
\end{split}
\end{equation}
where $R$ is chosen as mentioned just before \eqref{2.4}.
From now we assume that
\begin{equation}\label{g4}
\xi_1\le| x|\le \xi_2.
\end{equation}
We start by proving $C^0$ estimates. We split the proof in some steps.
\vskip0.2cm
{\bf Step 1: estimate of $I_{1,n}$}
\vskip0.2cm
In this step we prove that, for $n\to \infty$
\begin{equation}\label{g2}
I_{1,n}=o(1).
\end{equation}
Indeed in $\Omega\setminus\left(\cup_j B_R(x_{j,n})\right)$, by \eqref{3} $u_n$ is uniformly bounded and since $| x|\le \xi_2$ we have
\[
I_{1,n}=\lambda_n\int_{\Omega\setminus\left(\cup_j B_R(x_{j,n})\right)}G(r_n x+x_{i,n},y) e^{u_n(y)}dy=O(\lambda_n)=o(1)\]
as $n\to \infty$, which gives \eqref{g2}.
\vskip0.2cm
{\bf Step 2: estimate of $I_{2,n}$}
\vskip0.2cm
In this step we prove that, for $n\to \infty$
\begin{equation}\label{g3}
I_{2,n}=-4\log r_n+ 4\log \frac 1{| x|}+8\pi \mathcal{R}(P_i)+8\pi \sum_{j=1, j\neq i}^{m}G(P_i,P_j)+o(1).
\end{equation}
This is the more delicate estimate and we need to consider the different cases $j\ne i$ and $j=i$.
\vskip0.2cm
{\bf Step 3: case $j\ne i$}
\vskip0.2cm
When $j\neq i$ it is easy to see that $|G(r_n x+x_{i,n},y)|\leq C$ for $y\in B_R(x_{j,n})$ since $r_n x+x_{i,n}\in B_R(x_{i,n})$ for $n$ large (by $| x|\le \xi _2$) and so
\[\begin{split}
&\lambda_n\int_{B_R(x_{j,n})} G(r_n x+x_{i,n},y) e^{u_n(y)}dy=\int_{B_{\frac R{\delta_{j,n}}}(0)}G(r_n x+x_{i,n},\delta_{j,n} y+x_{j,n})e^{ \tilde u_{j,n}( y)}d y=\\
& \ \ \ \ 8\pi G(P_i,P_j)+o(1)
\end{split}\]
when $n\to \infty$, by \eqref{2.6a}. 
\vskip0.2cm
{\bf Step 4: case $j=i$}
\vskip0.2cm
Here we have that
\[\begin{split}
&\lambda_n \int_{B_R(x_{i,n})} G(r_n  x+x_{i,n},y) e^{u_n(y)}dy\\
& \ \ \ =\lambda_n\int _{B_R(x_{i,n})} \left[\frac 1{2\pi}\log \frac 1{|r_n x+x_{i,n}-y|}+H(r_n x+x_{i,n},y)\right] e^{u_n(y)}dy\\
& \ \ \ \ \text{ and, letting }y=\delta_{i,n} y+x_{i,n}\\
& \ \ =\frac 1{2\pi}\int_{B_{\frac R{\delta_{i,n}}}(0)}\log \frac 1{|r_n x-\delta_{i,n} y|}e^{ \tilde u_{i,n}( y)}d y+ \int _{B_{\frac R{\delta_{i,n}}}(0)} H(r_n x+x_{i,n},\delta_{i,n} y+x_{i,n})e^{ \tilde u_{i,n}( y)}d y=\\
& \ \ =\frac 1{2\pi}\int_{B_{\frac R{\delta_{i,n}}}(0)}\log \frac 1{| x-\frac{\delta_{i,n}}{r_n} y|}e^{ \tilde u_{i,n}( y)}d y+ \frac 1{2\pi}\log \frac 1{r_n}\int_{B_{\frac R{\delta_{i,n}}}(0)}e^{ \tilde u_{i,n}( y)}d y\\
& \ \ \ \ \ \ \ \ +
\int _{B_{\frac R{\delta_{i,n}}}(0)} H(r_n x+x_{i,n},\delta_{i,n} y+x_{i,n})e^{ \tilde u_{i,n}( y)}d y=L_{1,n}+L_{2,n}+L_{3,n}
\end{split}\]
The easiest integral to estimate is $L_{3,n}$. Indeed, since $|H(r_n x+x_{i,n},\delta_{i,n} y+x_{i,n})|\leq C$ for $ y\in  B_{\frac R {\delta_{i,n}}(0)}$ by \eqref{2.6a} then, as $n\to \infty$
\[L_{3,n}=\int _{B_{\frac R{\delta_{i,n}}}(0)} H(r_n x+x_{i,n},\delta_{i,n} y+x_{i,n})e^{ \tilde u_{i,n}( y)}d y= 8\pi H(P_i,P_i)+o(1)=8\pi \mathcal{R}(P_i)+o(1).\]
Next we prove that,  as $n\to \infty$
\begin{equation}\label{g1}
L_{2,n}=-4\log r_n+O\big(\l_n^2\log r_n\big)=-4\log r_n+o(1).
\end{equation}
Indeed by \eqref{stima2} we have 
\[\begin{split}
L_{2,n}&=\frac 1{2\pi}
\log \frac 1{r_n}\int_{B_{\frac R{\delta_{i,n}}}(0)}e^{ \tilde u_{i,n}( y)} d y=\frac{8\pi+O\big(\l_n\big)}{2\pi}
\log \frac 1{r_n}\\
&=-4\log r_n+O(\delta_{i,n}^2\log r_n)
\end{split}
\]
as $n\to \infty$. Finally  by \eqref{b10} we get $\delta_{i,n}^2\log r_n=o(1)$ and \eqref{g1} follows.\\
Finally let us estimate the integral
\[
L_{1,n}:=\frac 1{2\pi}\int_{B_{\frac R{\delta_{i,n}}}(0)}\log \frac 1{| x-\frac{\delta_{i,n}}{r_n} y|}e^{\tilde  u_{i,n}( y)}d y.
\]
Take $\rho=\frac {\xi _1}2>0$ and let us split $B_{\frac R{\delta_{i,n}}(0)}$ as $$B_{\frac R{\delta_{i,n}}(0)}=B_{\rho\frac {r_n}{\delta_{i,n}}}\left(\frac{r_n}{\delta_{i,n}} x\right)\cup\left(B_{\frac R{\delta_{i,n}}(0)}\setminus B_{\rho\frac {r_n}{\delta_{i,n}}}\left(\frac{r_n}{\delta_{i,n}} x\right)\right).$$
Observe that by \eqref{b10} and \eqref{g4} and using that $|x|\geq \xi_1$, if  $ y\in B_{\rho\frac {r_n}{\delta_{i,n}}}\left(\frac{r_n}{\delta_{i,n}} x\right)$ then $| y|=|\frac{r_n}{\delta_{i,n}} x+ y-\frac{r_n}{\delta_{i,n}} x|\geq \frac{r_n}{\delta_{i,n}}\xi _1-\rho \frac{r_n}{\delta_{i,n}}=\frac{r_n}{\delta_{i,n}}\frac{\xi_1}2\to \infty$ if $n\to \infty$. This means that
$$B_{\frac R{\delta_{i,n}}(0)}\setminus B_{\rho\frac {r_n}{\delta_{i,n}}}\left(\frac{r_n}{\delta_{i,n}} x\right)\to \R^2
$$
as $n\to \infty$.
Then we write
\[\begin{split}
L_{1,n}=&\frac 1{2\pi} \int_{B_{\frac R{\delta_{i,n}}(0)}\setminus B_{\rho\frac {r_n}{\delta_{i,n}}}\left(\frac{r_n}{\delta_{i,n}} x\right)}
\log \frac 1{| x-\frac{\delta_{i,n}}{r_n} y|}
e^{ \tilde u_{i,n}( y)}d y\\
&+\frac 1{2\pi} \int_{ B_{\rho\frac {r_n}{\delta_{i,n}}}\left(\frac{r_n}{\delta_{i,n}} x\right)}
\log \frac 1{| x-\frac{\delta_{i,n}}{r_n} y|}
e^{ \tilde u_{i,n}( y)}d y:=J_{1,n}+J_{2,n}.
\end{split}
\]
In $J_{1,n}$, since $ y\notin B_{\rho\frac {r_n}{\delta_{i,n}}}\left(\frac{r_n}{\delta_{i,n}} x\right)$ we have that $| x-\frac{\delta_{i,n}}{r_n} y|=\frac{\delta_{i,n}}{r_n}|\frac{r_n}{\delta_{i,n}} x- y|>\rho$. Then
\[\left| \log \frac 1{| x-\frac{\delta_{i,n}}{r_n} y|}\right|\leq \max \{|\log \rho|, | x-\frac{\delta_{i,n}}{r_n} y|\}\leq C+| x|+| y|\leq C+| y|.
\]
Then, by \eqref{2.6a} we have
\[\log \frac 1{| x-\frac{\delta_{i,n}}{r_n} y|}
e^{ \tilde u_{i,n}( y)}\leq \frac{ C+| y|}{(1+| y|^2)^2}
\]
and passing to the limit we get, as $n\to \infty$
\[J_{1,n}=\frac 1{2\pi}\log \frac {1}{| x|}\int_{\R^2}e^{U( y)}d y+o(1)=4 \log \frac {1}{| x|}+o(1).\]
Next we estimate $J_{2,n}$. We use \eqref{2.6a},  $| y|\geq \frac{r_n}{\delta_{i,n}}\frac{\xi _1}2$ and $\frac {r_n}{\delta_{i,n}}\to+\infty$ to have that 
\begin{equation}\label{numero-bis}
e^{ \tilde u_{i,n}(y)}\leq \frac C{\left(8+\frac {r_n^2}{\delta_{i,n}^2}\frac{\xi _1^2}{4}\right)^2}\leq C \frac {\delta_{i,n}^4}{r_n^4}.
\end{equation}
Then
\[\begin{split}
&|J_{2,n}|
\leq\frac 1{2\pi} \int_{ B_{\rho\frac {r_n}{\delta_{i,n}}}\left(\frac{r_n}{\delta_{i,n}} x\right)}\left|\log 
\frac {1}{| x-\frac{\delta_{i,n}}{r_n} y|}\right| e^{ \tilde u_{i,n}( y)}d y\\
& \ \ \  
\leq C \frac {\delta_{i,n}^4}{r_n^4}\int_{ B_{\rho\frac {r_n}{\delta_{i,n}}}\left(\frac{r_n}{\delta_{i,n}} x\right)}
\left|\log 
\frac {1}{| x-\frac{\delta_{i,n}}{r_n} y|}\right| 
d y
\\
& \ \ \ =C \frac {\delta_{i,n}^4}{r_n^4}\int_0^{ \rho\frac {r_n}{\delta_{i,n}}} r
\left|\log\left(r\frac{\delta_{i,n}}{r_n}\right)\right| 
dr=O\left(\frac {\delta_{i,n}^2}{r_n^2}\log\frac {\delta_{i,n}}{r_n}\right)=o(1)
\end{split}
\]
as $n\to \infty$.
Summarizing, we have as $n\to \infty$
\[\begin{split}
I_{2,n}=&\lambda_n\sum_{j=1}^n \int_{B_ R(x_{j,n})} G(r_n x+x_{i,n},y) e^{u_n(y)}dy\\
&=\sum_{j=1, j\neq i}^{m} \underbrace{\lambda_n \int_{B_ R(x_{j,n})} G(r_n x+x_{i,n},y) e^{u_n(y)}dy}_{=8\pi G(P_i,P_j)+o(1)}+\underbrace{L_{1,n}}_{J_{1,n}+J_{2,n}}+\underbrace{L_{2,n}}_{-4\log r_n+o(1)}+\underbrace{L_{3,n}}_{8\pi \mathcal{R}(P_i)+o(1)}\\
&\text{ and using the expansions for $J_{1,n}$ and $J_{2,n}$}\\
&=-4\log r_n+4\log \frac 1{| x|}+8\pi \mathcal{R}(P_i)+8\pi \sum_{j=1, j\neq i}^{m}G(P_i,P_j)+o(1)
\end{split}\]
which gives the claim in \eqref{g3}.
\vskip0.2cm
{\bf Step 5, claim of $C^0$-estimate}
\vskip0.2cm
Putting together all the estimates of the previous steps we get
\[\begin{split}
v_{i,n}( x)
&=\lambda_n\int_{\Omega\setminus\left(\cup_j B_{R}(x_{j,n})\right)}G(r_n x+x_{i,n},y) e^{u_n(y)}dy+\lambda_n\sum_{j=1}^m \int_{B_{R}(x_{j,n})} G(r_n x+x_{i,n},y) e^{u_n(y)}dy
\\
&+4\log r_n=I_{1,n}+I_{2,n}+4\log r_n\\
&=4\log \frac 1{| x|}+8\pi \mathcal{R}(P_i)+8\pi \sum_{j=1, j\neq i}^{m}G(P_i,P_j) +o(1)
\end{split}\]
as $n\to \infty$ uniformly for $x\in \{x\in \R^2: \xi_1\le x\le\xi_2\}$ which gives the claim of the $C^0$-estimate in \eqref{3.5} and also in \eqref{convergenza-vi} .
\vskip0.2cm
{\bf Step 6, claim of $C^1$-estimate}
\vskip0.2cm
We can do the same computations for the derivatives of $v_{i,n}$.  Again from the Green representation formula and equation \eqref{1}  we have
\begin{equation}\label{numero}
\begin{split}
\nabla v_{i,n}(x)&=r_n\lambda_n \int_{\Omega} \nabla_x G(r_n x+x_{i,n},y) e^{u_n(y)}dy=r_n\lambda_n\int_{\Omega\setminus\left(\cup_j B_{R}(x_{j,n})\right)}\nabla_xG(r_n x+x_{i,n},y) e^{u_n(y)}dy+\\
&r_n \lambda_n\sum_{j=1}^n \int_{B_{R}(x_{j,n})} \nabla_x G(r_n x+x_{i,n},y) e^{u_n(y)}dy
\end{split}\end{equation}
where, as before 
\[\lambda_n\int_{\Omega\setminus\left(\cup_j B_{R}(x_{j,n})\right)}\nabla_xG(r_n x+x_{i,n},y) e^{u_n(y)}dy=O(\lambda_n)=o(1)\]
as $n\to \infty$ and, for $i\neq j$ 
\[\begin{split}
r_n\lambda_n\int_{B_{R}(x_{j,n})} \nabla_x G(r_n x+x_{i,n},y) e^{u_n(y)}dy&=r_n \int_{B_{\frac R{\delta_{j,n}}(0)}}\nabla_x G(r_n x+x_{i,n},\delta_{j,n}y+x_{j,n}) e^{\tilde u_{j,n}( y)}d y\\
&=r_n  8\pi \nabla _x G(P_i,P_j)+o(r_n)=o(1).\end{split}\]
Next we consider the last term in \eqref{numero}
\[\begin{split}
&r_n\lambda_n\int_{B_{R}(x_{i,n})} \nabla_x G(r_n x+x_{i,n},y) e^{u_n(y)}dy\\
&\ \ \ \ \ \ =r_n\lambda_n \int _{B_{ R}(x_{i,n})}\left(-\frac 1{2\pi}\frac {(r_n x+x_{i,n}-y)}{|r_n  x+x_{i,n}-y|^2}+\nabla_x H(r_n  x+x_{i,n},y)\right)e^{u_n(y)}dy\\
&\ \ \ \ \ \ =-\frac 1{2\pi} r_n\int_{B_{\frac R{\delta_{i,n}}(0)}}\frac {(r_n x-\delta_{i,n}y)}{|r_n x-\delta_{i,n} y|^2}e^{\tilde u_{i,n}( y)}dy+r_n\int_{B_{\frac R{\delta_{i,n}}(0)}}\nabla_x H(r_n  x+x_{i,n},\delta_{i,n}y+x_{i,n})e^{\tilde u_{i,n}( y)}dy
\\
&\ \ \ \ \ \ =-\frac 1{2\pi}\int_{B_{\frac R{\delta_{i,n}}(0)}}\frac {( x-\frac {\delta_{i,n}}{r_n} y)}{| x-\frac{\delta_{i,n}}{r_n}y|^2}e^{\tilde u_{i,n}( y)}d y+\underbrace{8\pi r_n \nabla_x H(P_i,P_i)+o(r_n)}_{=o(1)}
\end{split}\]
where we used again \eqref{2.6a} to pass to the limit.


As we did in the computation of $J_{1,n}$ and $J_{2,n}$ is Step $4$, we split the last integral as follows,
\[\begin{split}
I_n:=&-\frac 1{2\pi} \int_{B_{\frac R{\delta_{i,n}}(0)}}\frac {( x-\frac {\delta_{i,n}}{r_n} y)}{| x-\frac{\delta_{i,n}}{r_n}y|^2}e^{\tilde u_{i,n}( y)}d y\\
=&-\frac 1{2\pi} \int_{B_{\frac R{4\delta_{i,n}}(0)}\setminus B_{\rho\frac {r_n}{\delta_{i,n}}}\left(\frac{r_n}{\delta_{i,n}} x\right)}\frac {( x-\frac {\delta_{i,n}}{r_n} y)}{| x-\frac{\delta_{i,n}}{r_n} y|^2}e^{\tilde u_{i,n}( y)}dy\\
&-\frac 1{2\pi} \int_{ B_{\rho\frac {r_n}{\delta_{i,n}}}\left(\frac{r_n}{\delta_{i,n}} x\right)}\frac {( x-\frac {\delta_{i,n}}{r_n} y)}{| x-\frac{\delta_{i,n}}{r_n} y|^2}e^{\tilde u_{i,n}( y)}d y:=\bar I_{1,n}+\bar I_{2,n}
\end{split}
\]
In $\bar I_{1,n}$ 
$$\left|\frac {( x-\frac {\delta_{i,n}}{r_n} y)}{| x-\frac{\delta_{i,n}}{r_n} y|^2}\right|=
\frac1{| x-\frac{\delta_{i,n}}{r_n} y|}\le\frac1\rho\hbox{ since  } y\notin B_{\rho\frac {r_n}{\delta_{i,n}}}\left(\frac{r_n}{\delta_{i,n}} x\right)
$$
and, by \eqref{2.6a} we get, as $n\to \infty$
\[\bar I_{1,n}= -\frac 1{2\pi}\frac { x}{|x|^2}\int_{\R^2}e^{U(y)}d y+o(1)=-4 \frac { x}{| x|^2}+o(1).\]
In $\bar I_{2,n}$ instead we use \eqref{numero-bis} to get
\[\begin{split}
&|\bar I_{2,n}|=\frac 1{2\pi}\left|\int_{ B_{\rho\frac {r_n}{\delta_{i,n}}}\left(\frac{r_n}{\delta_{i,n}} x\right)}
\frac {(x-\frac {\delta_{i,n}}{r_n} y)}{| x-\frac{\delta_{i,n}}{r_n} y|^2}e^{\tilde u_{i,n}( y)}dy\right|\\
& \ \ \  \leq C \frac {\delta_{i,n}^4}{r_n^4}\int_{ B_{\rho\frac {r_n}{\delta_{i,n}}}\left(\frac{r_n}{\delta_{i,n}} x\right)}\frac {1}{| x-\frac{\delta_{i,n}}{r_n} y|} d y
\\
& \ \ \ =C \frac {\delta_{i,n}^4}{r_n^4} \int_0^{2\pi}d\theta\int_0^{ \rho\frac {r_n}{\delta_{i,n}}} \frac r{r\frac {\delta_{i,n}}{r_n}} dr=2 \pi C \frac {\delta_{i,n}^4}{r_n^4}\frac {r_n^2}{\delta_{i,n}^2}\to 0
\end{split}
\]
Putting together all the estimates we get
\begin{equation}\label{numero2}
\nabla v_{i,n}( x)=-4\frac{ x}{| x|^2}+o(1)
\end{equation}
uniformly for $x\in \{x\in \R^2: \xi_1\le x\le\xi_2\}$  
as $n\to \infty$ and this concludes the proof.

\end{proof}

\section{Critical points of $u_\l$ in $B_{\rho}(P_i)$}\label{S3}
In this section we will prove that any solution $u_\l$ which blows-up at $m$ points $\{P_1,\dots,P_m\}\in \Omega$, as $\lambda\to 0$, admits $exactly$ one non-degenerate critical point in each ball $B_\rho(P_i)$,  for $i=1,..,m$ if $\rho$ is small enough.
It is sufficient to prove the result for any sequence of values $\lambda_n$ such that $\lambda_n\to 0$ as $n\to \infty$.
\begin{proposition}\label{lem1}
Let $u_n$
be  a sequence of solutions to \eqref{1} which blows-up at $\{P_1,\dots,P_m\}$ as $n\to \infty$. Then there exists $\rho>0$ such that $u_n$ has a unique critical point in $\bar B_\rho(P_i)$ for $i=1,\dots,m$ when $n$ is large. This critical point is  the maximum $x_{i,n}$ in \eqref{2.4} and it is nondegenerate.
\end{proposition}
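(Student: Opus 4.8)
The plan is to establish uniqueness and nondegeneracy in three steps, combining the interior blow-up estimates from Section~\ref{se:2} with the "wrong scaling" estimate of Proposition~\ref{prop-2.1}. Fix an index $i\in\{1,\dots,m\}$ and recall that $x_{i,n}\to P_i$ with $u_n(x_{i,n})$ the local max.

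First I would dispose of the small ball $B_{\delta_{i,n}\bar R}(x_{i,n})$: after the rescaling \eqref{2.4}, the function $\tilde u_{i,n}$ converges in $C^1_{loc}(\R^2)$ to the Liouville profile $U(x)=\log(1+|x|^2/8)^{-2}$, whose only critical point is the origin and is nondegenerate (indeed $U$ is radial with $U''(0)<0$). Hence for $\bar R$ fixed and $n$ large, $\tilde u_{i,n}$ has a unique critical point in $B_{\bar R}(0)$, which is nondegenerate, so $u_n$ has a unique (nondegenerate) critical point in $B_{\delta_{i,n}\bar R}(x_{i,n})$, namely $x_{i,n}$ itself.

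Second, and this is the main obstacle, I would show there is \emph{no} critical point in the intermediate annulus $\{\delta_{i,n}\bar R\le |x-x_{i,n}|\le\rho\}$. The idea is to cover this region by annuli of the form $A^i_n=\{\xi_1 r_n\le|x-x_{i,n}|\le\xi_2 r_n\}$ for a slowly-shrinking scale $r_n$ satisfying $r_n/\delta_{i,n}\to\infty$. On each such annulus Proposition~\ref{prop-2.1}, formula \eqref{convergenza-vi}, gives $u_n(x)=-4\log|x-x_{i,n}|+c_i+o(1)$ in $C^1_{loc}$, so $\nabla u_n(x)=-4\frac{x-x_{i,n}}{|x-x_{i,n}|^2}+o(1)$, which for $n$ large is nonzero on $A^i_n$. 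A compactness/contradiction argument is needed to upgrade this to the \emph{whole} intermediate region uniformly: if there were a critical point $y_n$ with $\delta_{i,n}\bar R\le|y_n-x_{i,n}|\le\rho$ for $n$ along a subsequence, set $s_n:=|y_n-x_{i,n}|$; since both $s_n/\delta_{i,n}\to\infty$ (taking $\bar R$ large if necessary) and $s_n\to 0$, the scale $r_n=s_n$ satisfies \eqref{b10}, and applying Proposition~\ref{prop-2.1} with $\xi_1<1<\xi_2$ forces $\nabla u_n(y_n)=-4\frac{y_n-x_{i,n}}{s_n^2}+o(s_n^{-1})\ne0$, a contradiction. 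One must be a little careful in the borderline cases — when $s_n$ is comparable to $\delta_{i,n}$ (handled by Step~1, enlarging $\bar R$) and when $s_n$ is bounded away from $0$ (handled by Step~3 below) — but these are routine.

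Third, I would handle the outer ball $\bar B_\rho(P_i)\setminus\{\,|x-P_i|<\rho/2\,\}$ using \eqref{3}: on $\overline\Omega\setminus\{P_1,\dots,P_m\}$ one has $u_n\to 8\pi\sum_j G(\cdot,P_j)$ in $C^2_{loc}$, and near $P_i$ the dominant term is $8\pi G(\cdot,P_i)=4\log|x-P_i|^{-1}+\text{smooth}$, whose gradient blows up like $|x-P_i|^{-1}$ and so is nonzero on the annulus $\{\rho/2\le|x-P_i|\le\rho\}$ for $\rho$ small; hence for $n$ large $\nabla u_n\ne0$ there. Assembling the three regions and using that the maximum $x_{i,n}$ lies well inside $B_{\delta_{i,n}\bar R}(x_{i,n})$, we conclude $x_{i,n}$ is the unique critical point of $u_n$ in $\bar B_\rho(P_i)$ and it is nondegenerate. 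Finally I would note that $\rho$ can be chosen uniform over $i=1,\dots,m$ by shrinking, since only finitely many blow-up points are involved.
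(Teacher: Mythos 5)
Your proposal is correct and follows essentially the same route as the paper: the same three-scale decomposition (blow-up ball via the Liouville profile, intermediate annuli via Proposition \ref{prop-2.1} applied at the scale $r_n=|y_n-x_{i,n}|$ of the putative critical point, outer region via the $C^2$ convergence to $K$), with the same contradiction/compactness dichotomy on $s_n/\delta_{i,n}$. The only difference is organizational: the paper packages all regions into the single stronger statement $(x-x_{i,n})\cdot\nabla u_n(x)<0$ on a punctured ball (Remark \ref{ultimo}), handling the degenerate sub-case $\tilde\zeta=0$ by a Rolle-type argument where you instead invoke the nondegeneracy of $D^2U(0)$ together with the $C^2_{loc}$ convergence, which is equally valid since \eqref{2.6} is in $C^\infty_{loc}$.
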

\begin{remark}\label{ultimo}
Actually we will prove something more, precisely that there exists $\bar \rho>0$ such that 
\begin{equation}\label{a1}
(x-x_{i,n})\cdot \nabla u_n(x)<0 \text{ in } B_{\bar \rho}(x_{i,n})\setminus\{x_{i,n}\}
\end{equation}
for $i=1,\dots,m$.  Then we take 
$\rho:=\frac{\bar \rho}2$, and we get that 
\[(x-x_{i,n})\cdot \nabla u_n(x)<0 \text{ in } \bar B_{\rho}(P_i)\setminus\{x_{i,n}\}.\]
This will show that $u_n$ has the unique critical point $x_{i,n}$ in $\bar B_{\rho}(P_i)$.  
\end{remark}
\vskip0.2cm
\begin{proof} 
As pointed out in Remark \ref{ultimo}, we prove \eqref{a1} to get that $u_n$ has the unique critical point $x_{i,n}$ in $\bar B_{\rho}(P_i)$.  \\
We argue by contradiction and we assume there exists an index $i\in \{1,\dots,m\}$ and points $\zeta_n\in B_{\bar \rho_n}(x_{i,n})\setminus\{x_{i,n}\}$ such that $\bar \rho_n\to 0$ and 
\[(\zeta_n-x_{i,n})\cdot \nabla u_n(\zeta_n)\geq 0  \ \ \text{ for every }n.\]
Up to a subsequence $\zeta_n\to \zeta$ and since $\bar \rho_n\to 0$  then $\zeta=P_i$.  Let  $\delta_{i,n}$ be the scaling parameter as defined in \eqref{2.5} related to the index $i$. Up to a subsequence, we have the following alternative,
\begin{itemize}
\item[a)]$\zeta_n\in B_{\bar R\d_n}(x_{i,n})$ for some $\bar R>0$ for any $n$.
\item[b)] $\zeta_n\notin B_{\delta_{i,n}\bar R}(x_{i,n})$ for any real number $\bar R>0$ for any $n$.
\end{itemize}
\vskip0.2cm
\underline{Proof of case a)}
\vskip0.2cm
Here we rescale the solution $u_n$ around $x_{i,n}$ as in \eqref{2.4} and we let $\tilde \zeta_n:=\frac{\zeta_n-x_{i,n}}{\delta_{i,n}}$.  Then $|\tilde \zeta_n|<\bar R$ and so, up to a subsequence $\tilde 
\zeta_n\to \tilde \zeta\in B_{\bar R}(0)$. From \eqref{2.4} we have that  $\tilde \zeta_n\cdot \nabla \tilde u_{i,n}(\tilde \zeta_n)=(\zeta_n-x_{i,n})\cdot \nabla u_n(\zeta_n)\geq 0$, and, passing to the limit and using \eqref{2.6} $\tilde \zeta_n\cdot \nabla \tilde u_{i,n}(\tilde \zeta_n)\to\tilde \zeta\cdot \nabla U(\tilde \zeta)=-\frac 12 \frac{|\tilde \zeta|^2}{\left(1+\frac {|\tilde \zeta|^2}8\right)}\leq 0$. If $\tilde \zeta\neq 0$ we reach a contradiction. 

So suppose that $\tilde \zeta=0$ and consider  the function $\varphi_n(t):=\tilde u_{i,n}(t\tilde \zeta_n)$. Since  $\varphi_n$ has a maximum at $0$ and $\varphi_n'(1)=\tilde \zeta_n\cdot \nabla \tilde u_{i,n}(\tilde \zeta_n)\ge0$ we deduce that there exists a $minimum$ point $z_n\in(0,\tilde \zeta_n)$. From $\varphi_n''(z_n)\ge0$, passing to the limit we get that 
$$\frac{\partial^2 U(0)}{\partial y_i\partial y_j}\xi_i\xi_j\ge0$$
with $\xi=\lim\limits_{n\to+\infty}\frac{z_n}{|z_n|}$. This gives a contradiction since $0$ is a $non-degenerate$ maximum of $U$.

This proves the uniqueness of the critical point of $u_n$ in $B_{\bar R\d_n}(x_{i,n})$ and the $C^2$ convergence of $\tilde u_{i,n}$ to $U$ gives also its non-degeneracy.

\vskip0.2cm
\underline{Proof of case b)}
\vskip0.2cm
In this case we have that $\zeta_n\to0$ with $\lim\limits_{n\to+\infty}\frac{|\zeta_n-x_{i,n}|}{\d_{i,n}}=+\infty$.
Here we use Proposition \ref{prop-2.1} with $r_n:=|\zeta_n-x_{i,n}|$.  
Let $\tilde\zeta_n:=\frac{\zeta_n-x_{i,n}}{r_n}$. So $|\tilde\zeta_n|=1$ for every $n$ and we have that, if $v_{i,n}$ is the function defined in \eqref{vi},
\[\tilde\zeta_n\cdot \nabla v_{i,n}(\tilde \zeta_n)=(\zeta_n-x_{i,n})\cdot \nabla u_n(\zeta_n)\geq 0.\]
Using \eqref{numero2} we have that, up to a subsequence, $\tilde\zeta_n\to \tilde\zeta\in \partial B_1(0)$ and $\tilde \zeta$ satisfies
\[\tilde\zeta\cdot \left(-4\frac{\tilde\zeta}{|\tilde\zeta|^2}\right)=-4 \geq 0\]
which gives a contradiction.

So from \eqref{a1} we get the claim.
\end{proof}
By the previous proposition the problem of the computation of the number of critical points of $u_\l$ is reduced to understand what happens in the region $\Omega\setminus \cup_{j=1}^m \bar B_\rho(P_j)$. We discuss the different situations in next section.

\section{Critical points of $u_\l$ in $\Omega\setminus \cup_{j=1}^m \bar B_\rho(P_j)$}\label{S4}

In this section we study the number of critical points of $u_\l$ in the set $$D:=\Omega\setminus \cup_{j=1}^m \bar B_\rho(P_j),$$
where $\rho$ is the value given in Proposition \ref{lem1}. As previously pointed out, the geometry and the topology fo $D$ have a great influence.

In the set $D$ it will important to consider the harmonic function 
\begin{equation}\label{kx}
K(x):=8\pi \sum_{j=1}^m G(x, P_j).
\end{equation}
We start by showing some of the properties of $K(x)$, they are basically a consequence of Theorem \ref{AM}.
\begin{proposition}\label{prop-critical-points-K}
The function $K(x)$ in $\Omega\setminus\{P_1,\dots,P_m\}$ has only a finite number of  critical points $\mathcal C:=\{z_1,\dots, z_l\}$ which are saddle points  of finite multiplicity $m_j \geq 1$ and $index_{z_j}(\nabla K) \leq -1$.  Moreover whenever $index_{z_j}(\nabla K) =-1$ then $z_j$ is a nondegenerate saddle point.
\end{proposition}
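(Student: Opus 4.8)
The plan is to use that $K$ is \emph{harmonic} in $\Omega\setminus\{P_1,\dots,P_m\}$, so that the complex--analytic description of critical points of harmonic functions recalled in Remark \ref{RAM} (taken from \cite{am}) applies directly; the only extra work is to confine the critical set to a compact part of $\Omega\setminus\{P_1,\dots,P_m\}$, which turns ``isolated'' into ``finitely many''.

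First I would collect the elementary properties of $K$. Since each $G(\cdot,P_j)$ is harmonic and strictly positive in $\Omega\setminus\{P_j\}$ and vanishes on $\partial\Omega$, the function $K$ is harmonic (hence real analytic) and positive in the connected open set $\Omega\setminus\{P_1,\dots,P_m\}$, it vanishes on $\partial\Omega$, and by \eqref{4} it tends to $+\infty$ as $x\to P_j$; in particular $K$ is non-constant. Next I would show that every critical point of $K$ lies in a fixed compact subset of $\Omega\setminus\{P_1,\dots,P_m\}$. Near $P_j$ one has
\[
\nabla K(x)=-4\,\frac{x-P_j}{|x-P_j|^2}+8\pi\nabla_xH(x,P_j)+8\pi\!\!\sum_{h\ne j}\!\nabla_xG(x,P_h),
\]
so $|\nabla K(x)|\to\infty$ as $x\to P_j$ and $\nabla K\ne0$ in a punctured neighbourhood of each $P_j$; on $\partial\Omega$, since $K>0$ inside and $K=0$ on the boundary, Hopf's lemma gives $\partial_\nu K<0$, hence $\nabla K\ne0$ in a one-sided neighbourhood of $\partial\Omega$. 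Therefore there is $\eta>0$ such that the critical set of $K$ is contained in the compact set $\{x\in\Omega:\ \operatorname{dist}(x,\partial\Omega)\ge\eta,\ |x-P_j|\ge\eta\ \forall j\}$.

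Now I would invoke Remark \ref{RAM}: in the coordinate $z=x+iy$, the function $\partial_zK$ is holomorphic on $\Omega\setminus\{P_1,\dots,P_m\}$ and its zeros are exactly the critical points of $K$. Being non-constant, $\partial_zK\not\equiv0$, so its zeros are isolated of finite order; together with the compactness step this leaves only a finite list $z_1,\dots,z_l$, each carrying a finite multiplicity $m_j\ge1$ through $\partial_zK(z)=(z-z_j)^{m_j}g(z)$ with $g$ holomorphic, $g(z_j)\ne0$, and with $\operatorname{index}_{z_j}(\nabla K)=-m_j\le-1$. That each $z_j$ is a saddle point is immediate from the maximum principle, since a harmonic function has no interior local extremum. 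Finally, if $\operatorname{index}_{z_j}(\nabla K)=-1$ then $m_j=1$, i.e.\ $\partial_zK$ has a simple zero at $z_j$, so $a:=\partial_z\partial_zK(z_j)\ne0$; since $2a=K_{xx}(z_j)-iK_{xy}(z_j)$ and $K_{yy}=-K_{xx}$ by harmonicity, the Hessian of $K$ at $z_j$ has determinant $-(K_{xx}^2+K_{xy}^2)(z_j)=-|2a|^2<0$, so $z_j$ is a nondegenerate critical point, necessarily a saddle --- which is exactly the last assertion of Remark \ref{RAM}. (Alternatively, finiteness together with the saddle property can be read off Theorem \ref{AM} applied to the sublevel domain $\Omega_M=\{x\in\Omega:K(x)<M\}$ for $M$ large: its boundary consists of the $k+1$ curves forming $\partial\Omega$ together with $m$ small real-analytic circles around the $P_j$'s, and it contains all critical points of $K$; this is the reading behind the phrase ``a consequence of Theorem \ref{AM}''.)

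The only genuinely delicate point is the compactness step: ruling out accumulation of critical points at $\partial\Omega$ (handled by Hopf's lemma) and at the blow-up points $P_j$ (handled by the dominance of the logarithmic term in $\nabla K$). The remaining assertions are a direct transcription of Remark \ref{RAM} and of the maximum principle, so I would expect no further obstacle.
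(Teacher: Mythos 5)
Your proof is correct, and it reaches every assertion of the proposition; the difference from the paper lies in how finiteness is obtained. The paper first confines the critical set exactly as you do (Hopf's lemma at $\partial\Omega$, blow-up of $|\nabla K|$ at each $P_j$), but then chooses $M$ large so that $\{K=M\}$ consists of $m$ disjoint smooth closed curves $\Gamma_1,\dots,\Gamma_m$ encircling the $P_j$'s and applies Theorem \ref{AM} to $K$ on $\Omega\setminus\cup_j\bar A_j$, whose boundary components carry the constant values $0$ and $M$; this is precisely the ``alternative'' you sketch in parentheses, except that the paper truncates with a level set of $K$ rather than with small circles around the $P_j$'s (small circles would not give constant boundary data, so Theorem \ref{AM} would not apply verbatim there --- a point worth being careful about if you pursue that variant). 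Your primary route instead gets finiteness from the identity theorem for the holomorphic function $\partial_zK$ together with the compactness step, which is more elementary and self-contained; what it does not give, and what Theorem \ref{AM} provides for free, is the exact count with multiplicities $\sum_j m_j=N-2=m+k-1$, a piece of quantitative information the paper has available (even though in the sequel the index sum is rederived from Poincar\'e--Hopf applied to $u_n$). A further small plus of your write-up is that you prove the final implication (index $-1$ $\Rightarrow$ nondegenerate saddle) directly from the Hessian identity $\det D^2K(z_j)=-\bigl(K_{xx}^2+K_{xy}^2\bigr)(z_j)<0$ rather than merely citing Remark \ref{RAM}. No gaps.
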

\begin{proof}
The function $K(x)$ is harmonic in $\Omega\setminus\{P_1,\dots,P_m\}$.  Since
\begin{itemize}
\item $\lim\limits_{x\to P_j}K(x),|\nabla K(x)|\to +\infty$ for $j=1,\dots,m$ 
\item $K(x), |\nabla K(x)|=O(1)$ in $\omega$ if $\bar \omega\subset \bar \Omega \setminus\{P_1,\dots,P_m\}$
\end{itemize}
we can select a value of $M$ large in such a way that the level set $K(x)=M$ is given by $m$ curves  $\Gamma_1, \dots, \Gamma_m$ such that $\Gamma_i\cap \Gamma_j=\emptyset$ for $i\neq j$.  

Moreover $M$ can be chosen such that $|\nabla K(x)|>0$ on the curves $\Gamma_j$ for $j=1,\dots,m$. 

Then these curves are simple, closed, smooth and are the boundary of bounded sets $A_j$ such that $P_j\in A_j$.\\
Since $K(x)=M>0$ on $\Gamma_1\cup\dots\cup\Gamma_m$ and $K(x)=0$ on $\partial \Omega$ can apply Theorem \ref{AM}  getting that $K(x)$ has a finite number of critical points in $\Omega\setminus\cup_{j=1}^m \bar A_j$. 
Moreover since $K(x)>0$ in $D$ the Hopf Lemma implies that there are not critical points on $\partial \Omega$. 

 The maximum principle for harmonic functions then implies that the critical points $\{z_1,\dots, z_l\}$ are saddle critical points with finite multiplicities $m_1,\dots,m_l$ by the analiticity of harmonic functions.  
 
Finally by Remark \ref{RAM} we have that if $index_{z_j}(\nabla K)=-1$ 
then $m_j=1$ and 
$z_j$ is a nondegenerate saddle point.
\end{proof}
As in the previous section it is enough to prove the results for any sequence of values $\lambda_n$ such that $\lambda_n\to 0$.
\begin{proposition}\label{prop2}
Let $u_n$ be  a sequence of solutions to \eqref{1} which blows-up at $\{P_1,\dots,P_m\}$ as $n\to \infty$.  
Then $u_n$,  for $n$ large enough, has only a finite number of isolated critical points that we denote by $\mathcal C_n:=\{z_{1,n},\dots,z_{l_n,n}\}$.  
Moreover 
\begin{equation}\label{ind}
index_{z_{j,n}}(\nabla u_n)\in \{1,0,-1\}
\end{equation}
and, whenever the index is $1$ then $z_{j,n}$ is a maximum, while whenever the index is $-1$ $z_{j,n}$ is a nondegenerate saddle point.
\end{proposition}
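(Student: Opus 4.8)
The plan is to prove this in two halves: finiteness of the critical set, and the restriction \eqref{ind} on the indices. For the finiteness, I would first reduce the analysis to the region $D = \Omega \setminus \cup_{j=1}^m \bar B_\rho(P_j)$, since Proposition \ref{lem1} already tells us that in each $\bar B_\rho(P_i)$ the only critical point is the nondegenerate maximum $x_{i,n}$, which is isolated with index $1$. On $D$ the solution $u_n$ is uniformly bounded and, by Theorem \ref{T0}, converges in $C^2_{loc}(\bar\Omega\setminus\{P_1,\dots,P_m\})$ to $K(x) = 8\pi\sum_j G(x,P_j)$. By Proposition \ref{prop-critical-points-K}, $K$ has only finitely many critical points $\{z_1,\dots,z_l\}$, all in the interior, all nondegenerate when the index is $-1$; more importantly $|\nabla K| > 0$ on $\partial\Omega$ by the Hopf Lemma and on $\partial B_\rho(P_i)$ for $\rho$ small by Proposition \ref{lem1} (which controls $(x-x_{i,n})\cdot\nabla u_n$, hence $\nabla u_n \neq 0$ on the sphere). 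Fix small disjoint balls $B_\sigma(z_k)$ around the critical points of $K$. Outside $\cup_k B_\sigma(z_k)$ we have $|\nabla K| \geq c > 0$ on the compact set $\bar D \setminus \cup_k B_\sigma(z_k)$, so by $C^1$ convergence $|\nabla u_n| > 0$ there for $n$ large: no critical points of $u_n$ lie outside $\cup_k B_\sigma(z_k)$. Inside each $B_\sigma(z_k)$, the critical points of $u_n$ form a compact set; to see they are finite, I would note that $u_n$ is real-analytic (being a solution of an elliptic equation with analytic nonlinearity $\lambda_n e^{u_n}$ — standard elliptic analyticity), so its critical set, if not discrete, would contain a one-dimensional analytic piece, which can be excluded for $n$ large by a degree/winding argument as in the next step. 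Thus $\mathcal C_n$ is finite.

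For the index restriction \eqref{ind}, the key tool is degree theory together with the index formula for harmonic functions from Remark \ref{RAM}. By the $C^1$ convergence $\nabla u_n \to \nabla K$ uniformly on $\partial B_\sigma(z_k)$ (where $\nabla K \neq 0$), the total degree is preserved:
\[
\sum_{z_{j,n}\in B_\sigma(z_k)} index_{z_{j,n}}(\nabla u_n) = \deg(\nabla u_n, B_\sigma(z_k), 0) = \deg(\nabla K, B_\sigma(z_k), 0) = index_{z_k}(\nabla K) = -m_k \leq -1,
\]
using Remark \ref{RAM}. This pins down the \emph{sum} of the indices of the $u_n$-critical points clustering near each $z_k$, but not the individual indices. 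To get $index_{z_{j,n}}(\nabla u_n) \in \{1,0,-1\}$ for each single critical point I would argue locally: near a critical point $z$ of a $C^2$ function in the plane, $\nabla u_n$ is a perturbation of a linear map if the Hessian is nondegenerate, giving index $\pm 1$ (index $+1$ for a definite Hessian, i.e. a local max or min; index $-1$ for a saddle); if the Hessian is degenerate but nonzero the index is $0$ (the linearization $\nabla^2 u_n(z)$ has rank $1$, and a rank-one symmetric map has degree $0$ on a small sphere); and the case where the full Hessian vanishes at $z$ is the one to rule out. Here is where I'd invoke the closeness to the harmonic function $K$ more quantitatively: since $\Delta u_n = \lambda_n e^{u_n} \to 0$ on $D$, the function $u_n$ is "almost harmonic," so I would compare $\nabla u_n$ on a small circle $\partial B_r(z_{j,n})$ to the gradient of a genuinely harmonic function obtained by solving $\Delta h = 0$ with the same boundary data; harmonic functions in the plane have gradients whose winding number around an isolated zero is $\leq -1$ (Remark \ref{RAM}), and by continuity of the degree under $C^1$-small perturbations this forces $index_{z_{j,n}}(\nabla u_n) \leq 1$; combined with the fact that a local max has index exactly $1$ and everything else is $\leq 0$, and that a nonmaximum has index $\geq -1$ by the harmonic comparison, we land in $\{1,0,-1\}$. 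Finally, when the index is $+1$ the harmonic comparison shows the critical point cannot be a saddle (saddles of near-harmonic functions have nonpositive index), nor a strict minimum (by the maximum principle $u_n$ has no interior minimum since $u_n > 0$ in $\Omega$ and $u_n = 0$ on $\partial\Omega$ — any interior minimum would violate $-\Delta u_n = \lambda_n e^{u_n} > 0$); hence it is a local maximum. When the index is $-1$, Remark \ref{RAM} gives that $z_{j,n}$ is a nondegenerate saddle of the comparison harmonic function, and this nondegeneracy transfers to $u_n$ by the implicit function theorem since the Hessians are $C^0$-close and the limiting one is invertible.

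The main obstacle I anticipate is the passage from the \emph{aggregate} index identity $\sum index_{z_{j,n}}(\nabla u_n) = -m_k$ near each $z_k$ to the \emph{pointwise} bound $index_{z_{j,n}}(\nabla u_n) \in \{1,0,-1\}$: the former follows softly from degree stability, but the latter requires showing that no \emph{single} critical point of $u_n$ can have, say, index $-2$ or $+2$, even though several low-index critical points could in principle coalesce as $n \to \infty$. The resolution must exploit that $u_n$ is close to harmonic \emph{at every scale} near $z_{j,n}$ — precisely, that on the blow-down $w_n(y) = u_n(z_{j,n} + \varepsilon_n y) - u_n(z_{j,n})$ (rescaled to have unit gradient oscillation) one still has $\Delta w_n = \varepsilon_n^2 \lambda_n e^{u_n} \to 0$, so $w_n$ converges to a harmonic function whose gradient has an isolated zero of multiplicity $\geq 1$, giving index $\leq -1$ on the rescaled circle — unless the limit is the linear profile, giving index $\pm 1$. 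Making this rescaling argument precise, and checking that the limit profile's zero is genuinely isolated (so the degree passes to the limit), is the delicate point; everything else is bookkeeping with the Poincaré–Hopf machinery and the already-established $C^2$ asymptotics.
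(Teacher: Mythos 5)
Your overall strategy (restrict to $D$, use the $C^1$ convergence to $K$ to localize the critical points of $u_n$ near those of $K$, invoke real-analyticity, then control indices by degree theory) matches the paper's, but both halves of your argument have a genuine gap at the decisive step. For finiteness: after reducing, via analyticity, to excluding a one-dimensional analytic arc of critical points, you say this ``can be excluded by a degree/winding argument'', but no degree computation rules out a curve of zeros of $\nabla u_n$ --- the total degree of $\nabla u_n$ on $\partial B_\sigma(z_k)$ is perfectly compatible with the critical set inside being a curve (think of a radial function on an annulus, whose critical set contains a circle). The paper's actual mechanism is a maximum-principle argument: such an arc is a level curve of $u_n$, it cannot terminate inside $D$ nor touch $\partial D$, hence closes up and bounds a region $G_n$; since $-\Delta u_n>0$, $u_n$ exceeds its boundary value $c_n$ in $G_n$ and the Hopf lemma forces a nonzero normal derivative on the curve, contradicting $\nabla u_n=0$ there (and when $G_n$ encloses a hole of $D$ one instead passes to the limit and contradicts the finiteness of the critical set of $K$ given by Proposition \ref{prop-critical-points-K}). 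Your localization inside small balls $B_\sigma(z_k)\subset D$ would in fact let you run only the first, easier branch of this dichotomy, but you do have to run it; degree theory is not a substitute.

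For the index bound \eqref{ind}: your case analysis is incorrect at two points. A rank-one Hessian does not force index $0$: the function $-x^2+y^4$ has a rank-one Hessian at the origin and index $-1$, while $-x^2-y^4$ has index $+1$; the index of a degenerate critical point is not the degree of its (singular) linearization, so higher-order terms cannot be discarded. More seriously, the harmonic comparison cannot deliver the crucial lower bound $\mathrm{index}\ge -1$: isolated critical points of planar harmonic functions have index $-m$ for every $m\ge 1$ (e.g.\ $\mathrm{Re}(z^3)$ has index $-2$), so closeness to a harmonic function only yields the upper bound, and your sentence deducing ``$\le 1$'' from ``$\le -1$'' is a non sequitur. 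The structural input you are missing --- and which is at the heart of Theorem 3.3 of \cite{am}, to which the paper simply delegates \eqref{ind} and the classification of the index-$1$ points as maxima and the index-$(-1)$ points as nondegenerate saddles --- is the sign of the Laplacian at the critical point itself: $\mathrm{tr}\,\nabla^2u_n(z_{j,n})=\Delta u_n(z_{j,n})=-\lambda_ne^{u_n(z_{j,n})}<0$, so the Hessian always has a strictly negative eigenvalue and in particular never vanishes. It is this rank-$\ge 1$ property, combined with the local structure theory of level sets for such solutions, that pins the index to $\{1,0,-1\}$; without using the equation at the critical point, the pointwise bound is out of reach, as you yourself anticipated in your closing paragraph.
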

\begin{proof}
The proof uses some ideas by \cite{am}. By classical results  $u_n$ is real-analytic in $\Omega$ (see for example \cite{am},  Cor 3.4 and the references therein).  

By Lemma \ref{lem1} $u_n$ has a unique critical point in $\bar B_\rho(P_i)$ for $i=1,\dots,m$ when $n$ is large enough and it cannot have critical points on $\partial \Omega$ by Hopf lemma.
First we show that the critical points of $u_n$ in $D$ are isolated when $n$ is large.  By \eqref{3} the critical points of $u_n$ in $D$ should converge to a critical point of $K(x)$ as $n\to \infty$.  Let us argue by contradiction and suppose that there exists a critical point $z_0:=(x_0,y_0)\in D$ for the function $K(x)$ which is the limit of a sequence of critical points $z_n$ for $u_n$ and such that the points $z_n$ are not isolated for every $n$. Then there exists sequences of points $z_n^h\in D$ such that $\nabla u_n(z_n^h)=0$ for every $n$ and for every $h$ and such that $z_n^h\to z_n$ as $h\to \infty$ and $z_n^h\to z_0$ as $n\to \infty$. Using that $-\Delta u_n=\lambda_n e^{u_n}>0$  we may assume, up to a rotation, that $(u_n)_{yy}(z_n)\neq 0$. By the implicit function theorem, for every $n$  there exists a neighborhood of $z_n$ in which the set $\sigma_n:=\{x\in D: (u_n)_y=0\}$ is a simple analytic arc that contains infinitely many of the points $z_n^h$. Since $(u_n)_x(z_n^h)=0$ for infinitely many points $z_n^h$ then it should be (by analicity) $(u_n)_x(x)=0$ on $\sigma_n$ meaning that the set of critical points of $u_n$ that passes in $z_n$ is a curve, that we call $\gamma_n$.  
These curves $\gamma_n$ are closed and contained in $D$ for every $n$ large.  (This follows since $u_n$ cannot have critical points on $\partial D$ and a curve of critical points cannot end inside $D$ by the maximum principle).\\

We denote by $G_n$ the subset of the plane that is bounded and such that $\partial G_n=\gamma_n$.  Observe that $\partial G_n=\gamma_n$ is smooth at least in a neighborhood of $z_n$ by construction. It is not possible that $G_n\subset D$. Indeed $u_n=c_n=costant$ on $\gamma_n$ while $-\Delta u_n>0$ in $G_n$  imply that $u_n(x)>c_n$ in $G_n$.  Then the Hopf lemma (that we can apply at  least in $z_n$) implies that the normal derivative of $u_n$ on $\gamma_n$ is negative, while $\gamma_n$ being a curve of critical points for $u_n$ forces $\nabla u_n(x)\cdot \nu=0$ for every $x\in \gamma_n$. 

Then, for every $n$ the set $G_n$ should contain a hole of $D$ (namely either a hole of $\Omega$ or a hole $B_\rho(P_j)$). Up to a subsequence the sets $G_n$ contain the same hole $\mathcal O\subset \R^2$ (that does not depend on $n$) for every $n$. 
The fact that $\mathcal O\subset G_n$ implies that, in the limit as $n\to \infty$, $\gamma_n$ converges to a closed curve $ \gamma$ which is the boundary of the nonempty set $G$,  and which,  by \eqref{3} is a curve of critical points for the function $K(x)$.  But this is not possible by Proposition \ref{prop-critical-points-K}. 
Then the critical points of $u_n$ are isolated for $n$ large. Finally, since $D$ is compact, then the set of critical points of $u_n$ is finite. The classification of the type of critical points for $u_n$ and formula \eqref{ind} then follows by Theorem 3.3 in \cite{am}.
\end{proof}
Now we are able to use the Poincarè Hopf Theorem to get: 
\begin{proposition}\label{prop-preuguali}
Let $u_n$
be  a sequence of solutions to \eqref{1} which blows-up at $\{P_1,\dots,P_m\}$ as $n\to \infty$.  Then, when $n$ is large
\begin{equation}\label{prima}
m+\sum_{z_j\in \mathcal C_n} index_{z_j} (\nabla u_n)=\chi(\Omega)
\end{equation}
where $\mathcal C_n$ is the set of critical points of $u_n$ in $D$ and $\chi(\Omega)$ is the Euler characteristic of $\Omega$.
\end{proposition}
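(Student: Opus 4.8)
The plan is to apply the Poincaré--Hopf Theorem (Theorem \ref{teo-hopf}) to the vector field $v=\nabla u_n$ on $\Omega$ and then split the resulting index sum according to the decomposition $\Omega=\big(\cup_{i=1}^m \bar B_\rho(P_i)\big)\cup D$. So first I would check the two hypotheses of Theorem \ref{teo-hopf}. Smoothness of $v$ is clear since $u_n$ is real-analytic in $\Omega$ (as recalled in the proof of Proposition \ref{prop2}). The zeros of $v$ are the critical points of $u_n$; by Proposition \ref{lem1} there is exactly one in each $\bar B_\rho(P_i)$ (the maximum $x_{i,n}$, which is moreover nondegenerate), and by Proposition \ref{prop2} there are only finitely many, all isolated, in $D$. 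Hence $v$ has isolated zeros for $n$ large. For the boundary condition, since $-\Delta u_n=\lambda_n e^{u_n}>0$ in $\Omega$ and $u_n=0$ on $\partial\Omega$ with $u_n>0$ inside, the Hopf boundary-point lemma gives $\partial_\nu u_n=\nabla u_n\cdot\nu<0$ on $\partial\Omega$, where $\nu$ is the outer normal. This is exactly the sign condition required.

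Then Theorem \ref{teo-hopf} with $N=2$ yields
\[
\sum_{z:\,\nabla u_n(z)=0}\ index_z(\nabla u_n)=(-1)^2\chi(\Omega)=\chi(\Omega),
\]
the sum running over all critical points of $u_n$ in $\Omega$. Now I would separate the contribution of the $m$ critical points lying in the balls $\bar B_\rho(P_i)$ from those in $\mathcal C_n$, the set of critical points in $D$. For each $i$, Proposition \ref{lem1} tells us that $x_{i,n}$ is a nondegenerate local maximum of $u_n$; a nondegenerate maximum in dimension $2$ has Morse index $2$, so $index_{x_{i,n}}(\nabla u_n)=(-1)^2=1$. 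Summing over $i=1,\dots,m$ gives a contribution of exactly $m$, and therefore
\[
m+\sum_{z_j\in\mathcal C_n}index_{z_j}(\nabla u_n)=\chi(\Omega),
\]
which is \eqref{prima}.

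I do not expect a genuine obstacle here, since all the hard analysis has already been carried out in Propositions \ref{lem1}, \ref{prop-critical-points-K} and \ref{prop2}: this proof is essentially bookkeeping on top of Poincaré--Hopf. The only points requiring a little care are (i) justifying that $\nabla u_n$ satisfies the inward-pointing boundary condition, which is the Hopf lemma as above, and (ii) correctly identifying the index of each of the $m$ concentration maxima as $+1$ using their nondegeneracy; if one prefers to avoid quoting the nondegeneracy, the same value $+1$ follows from the fact that $x_{i,n}$ is an isolated strict local maximum (so $-\nabla u_n$ points outward on small spheres around it), giving $\deg(\nabla u_n,B_\delta(x_{i,n}),0)=(-1)^2=1$.
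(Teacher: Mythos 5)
Your proof is correct and follows essentially the same route as the paper: apply Poincar\'e--Hopf to $\nabla u_n$ (with the Hopf lemma giving the boundary sign condition and Propositions \ref{lem1} and \ref{prop2} guaranteeing finitely many isolated zeros), then subtract the contribution $+1$ of each of the $m$ nondegenerate maxima $x_{i,n}$. The extra care you take in verifying the hypotheses and in computing the index of the maxima is exactly the bookkeeping the paper leaves implicit.
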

\begin{proof}
Since the critical points of $u_n$ are isolated and finite, when $n$ is large,  by Proposition \ref{prop2} we can use the Poincarè Hopf formula (see Theorem \ref{teo-hopf}), with $v=\nabla u_n$ in $\Omega$. Observe that by Hopf Lemma we have that $\nabla u_n \cdot \nu<0$. Then
\[\sum_{z_j} index_{z_j}(\nabla u_n)=\chi(\Omega)\]
where the sum is due on all the critical points $z_j$ of $u_n$ in $\Omega$.  Next we observe that the points $x_{i,n}$ are the unique critical points of $u_n$ in $B_\rho(P_i)$ for every $i=1,..,m$ by Lemma \ref{lem1} and they are nondegenerate maxima so that $index_{x_{i,n}}(\nabla u_n)=1$. This gives the claim.
\end{proof}
\begin{corollary}\label{prop-uguali}
Let $u_n$ be a sequence of solutions to \eqref{1} which blows-up at $\{P_1,\dots,P_m\}$ as $n\to \infty$.  Then the critical points of $u_n$ in $D$  converge to the critical points of $K(x)$ (see \eqref{kx}) and, for $n$ large,  it holds
\begin{equation}\label{3.23}
\begin{split}
\chi(\Omega)-m&=\sum_{z_j\in \mathcal C_n} index_{z_j} (\nabla u_n)
=\sum_{z_j\in \mathcal C} index_{z_j} (\nabla K)
\end{split}
\end{equation}
where $\mathcal C_n$ and $\mathcal C$ are the sets of critical points of $u_n$ and $K$ in the set $D$ respectively.
\end{corollary}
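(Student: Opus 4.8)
The plan is to treat the two halves of the statement separately. The first equality, $\chi(\Omega)-m=\sum_{z_j\in\mathcal C_n}index_{z_j}(\nabla u_n)$, requires no new work: it is merely Proposition \ref{prop-preuguali} rewritten. So the genuine content is (i) the convergence of the critical points of $u_n$ in $D$ to those of $K$, and (ii) the last equality $\sum_{z_j\in\mathcal C_n}index_{z_j}(\nabla u_n)=\sum_{z_j\in\mathcal C}index_{z_j}(\nabla K)$.

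For (i) I would use that $K(x)=8\pi\sum_{j=1}^m G(x,P_j)$, so that \eqref{3} in Theorem \ref{T0} is precisely the statement $u_n\to K$ in $C^2_{loc}(\overline\Omega\setminus\{P_1,\dots,P_m\})$; since $\overline D$ is a compact subset of $\overline\Omega\setminus\{P_1,\dots,P_m\}$, this gives $\nabla u_n\to\nabla K$ uniformly on $\overline D$. Hence if $z_n\in D$ is a critical point of $u_n$ with $z_n\to z_0$ along a subsequence, then $\nabla K(z_0)=\lim_n\nabla u_n(z_n)=0$. The Hopf Lemma applied to $K$ (positive in $D$, zero on $\partial\Omega$) rules out critical points of $K$ on $\partial\Omega$, while $K(x)=-4\log|x-P_j|+O(1)$ near $P_j$ forces $|\nabla K|\to\infty$ at each $P_j$, so after possibly shrinking $\rho$ (which is harmless, cf.\ Remark \ref{ultimo}) $K$ has no critical point in $\bigcup_j\overline B_\rho(P_j)\setminus\{P_j\}$. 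Therefore $z_0\in D$ is one of the finitely many points of $\mathcal C$ given by Proposition \ref{prop-critical-points-K}; i.e.\ the critical points of $u_n$ in $D$ accumulate only on $\mathcal C$.

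For (ii) I would run the standard degree-stability argument. Since $\mathcal C=\{z_1,\dots,z_l\}$ is finite, fix $\eta>0$ so small that the closed balls $\overline B_\eta(z_j)$ are pairwise disjoint, contained in $D$, and each contains no critical point of $K$ other than its center; set $c:=\min_j\min_{\partial B_\eta(z_j)}|\nabla K|>0$. For $n$ large, $\|\nabla u_n-\nabla K\|_{C^0(\overline D)}<c$, so the affine homotopy $t\mapsto t\nabla u_n+(1-t)\nabla K$ does not vanish on any $\partial B_\eta(z_j)$, and homotopy invariance of the Brouwer degree gives $deg(\nabla u_n,B_\eta(z_j),0)=deg(\nabla K,B_\eta(z_j),0)=index_{z_j}(\nabla K)$. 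Moreover, by part (i) and the compactness of $\overline D\setminus\bigcup_j B_\eta(z_j)$, on which $K$ has no critical point, for $n$ large every critical point of $u_n$ in $D$ lies in $\bigcup_j B_\eta(z_j)$ (otherwise a subsequence of such points would converge to a critical point of $K$ outside the balls, a contradiction). Since by Proposition \ref{prop2} these critical points are isolated, the excision and additivity properties of the degree give $deg(\nabla u_n,B_\eta(z_j),0)=\sum_{z\in\mathcal C_n\cap B_\eta(z_j)}index_z(\nabla u_n)$, and summing over $j$ yields $\sum_{z\in\mathcal C_n}index_z(\nabla u_n)=\sum_{z_j\in\mathcal C}index_{z_j}(\nabla K)$, as desired.

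I expect the only delicate step to be the ``no escape'' claim inside (ii): that no critical point of $u_n$ in $D$ can remain bounded away from $\mathcal C$ as $n\to\infty$. This is exactly where Proposition \ref{lem1} (which forbids extra critical points near the blow-up points), the Hopf Lemma (none on $\partial\Omega$), and the finiteness of $\mathcal C$ from Proposition \ref{prop-critical-points-K} all come into play; once this localization of the degree to neighbourhoods of $\mathcal C$ is secured, the corollary follows from routine degree theory.
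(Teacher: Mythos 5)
Your proposal is correct and follows essentially the same route as the paper, whose proof is simply the one--line remark that the corollary ``is a consequence of the convergence \eqref{3} together with \eqref{prima}'': your part on the first equality is exactly \eqref{prima}, and your parts (i)--(ii) are the standard degree--stability argument that the $C^1$ convergence $\nabla u_n\to\nabla K$ on $\overline D$ (plus the Hopf lemma on $\partial\Omega$, the absence of critical points of $K$ near the $P_j$, and the localization from Proposition \ref{lem1}) is implicitly invoking. Your write-up just makes explicit the details the paper leaves to the reader.
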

\begin{proof}
It is a consequence of the convergence  \eqref{3} together with  \eqref{prima}.
\end{proof}
Now we are in position to start the proof of Theorem \ref{prop-general}. Since the construction of the domain $\tilde\Omega$ is quite lengthy, we will divide the proof into two parts. In the first one we will prove the formulas \eqref{nc} and \eqref{nb} and subsequently we will construct  the domain $\tilde\Omega$ and the corresponding family of solutions $ u_\l$.

\begin{proof}[Proof of Theorem \ref{prop-general}]
{\bf Step 1: proof of \eqref{nb} and  \eqref{nc}}\\
It is sufficient to prove the result for any sequence of values $\lambda_n\to 0$.
Since $\Omega$ has $k$ holes then $\chi(\Omega)=1-k\leq 0$ and \eqref{3.23} gives that 
\[\sum_{z_j\in \mathcal C_n} index_{z_j} (\nabla u_n)=1-k-m\leq -1\]
so that the solutions $u_n$ have at least one nondegenerate saddle point in $D$.  In the general case, since $index_{z_j}(\nabla u_n)\in \{1,0,-1\}$ we can only say that $u_n$ admits at least $m+k-1$
 nondegenerate saddle points in $D$ and, recalling that we have $1$ maximum in each $B_\rho(P_j)$, $j=1,..,m$, we get the existence of at least $2m+k-1$ critical points in $\Omega$ which proves \eqref{nc}.
\end{proof}
Now we start the construction of the domain $\tilde\Omega$. Let us introduce some notations.
\vskip0.2cm
{\bf The dumbell domain}
\vskip0.2cm
Let $\Omega_0:=\Omega_1\cup\dots,\cup\ \Omega_m$,  where $\Omega_1,\dots, \Omega_m$ are are $m$ smooth, simply connected, bounded domains in $\R^2$ such that $\Omega_i\cap \Omega_j=\emptyset$ if 
$i\neq j$.  Assume that $\Omega_i\subset\{(x,y)\in \R^2: a_i\leq x\leq b_i \},$  for some $b_i<a_{i+1}$ and $\Omega_i\cap \{y=0\}\neq 0$ for $i=1,\dots,m$.  Let $V_\varepsilon:=\{(x,y)\in \R^2: |y|\leq \varepsilon, x\in (a_1,b_m)\}$.  Let $\Omega_\varepsilon$ be a smooth simply connected domain such that 
$\Omega_0\subset \Omega_\varepsilon\subset \Omega_0\cup V_\varepsilon$.
We say that $\Omega_\varepsilon$ is a $m-dumbell.$  \\
In particular each $\Omega_j$ can be a suitable ball of the same radius centered on the $x$-axis.
\begin{tikzpicture}
  \draw[black] (0,0) circle (1cm); 
  \draw[white, line width=3cm] (2, -0.15) arc (0:16:2cm);
  \draw[black]  (4,0) circle (1cm); 
  \draw[white, line width=3cm] (3, -0.15) arc (-4:1:3.1cm);
  \draw[black] (8,0) circle (1cm); 
  \draw[white, line width=3cm] (5, -0.15) arc (0:8:2cm);
   \draw[white, line width=3cm](7, -0.15) arc (0:8:2cm);
  \draw[black]  (1,-0.15) -- (3,-0.15); 
  \draw[black]  (1,0.15) -- (3,0.15); 
  \draw[black]  (5,-0.15) -- (7,-0.15); 
  \draw[black]  (5,0.15) -- (7,0.15); 
\end{tikzpicture}
\vskip0.2cm
\centerline{Example of dumbell with $m=3$}
\vskip0.2cm
Next Lemma was basically proved in  \cite{egp}. We repeat it for reader's convenience.
\begin{lemma}\label{prop-ex1}
For any $m\geq 2$ there exists an $m$-dumbell $\Omega_\varepsilon$ 
such that problem \eqref{1} has one family of solutions $u_\l$, for $\lambda$ small enough, which blow up at $m$ points $\{P_1,\dots,P_m\}$ in $\Omega_\e$ as $\lambda\to 0$. 
\end{lemma}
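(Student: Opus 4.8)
The strategy is to reduce the existence of a blowing-up family of solutions on the dumbell $\Omega_\varepsilon$ to the existence of a stable critical point of the Kirchhoff–Routh function $\mathcal{KR}_{\Omega_\varepsilon}$, and then invoke the result of \cite{egp} (or \cite{dkm}) which guarantees a family $u_\lambda$ concentrating at $m$ points whenever $\mathcal{KR}_{\Omega_\varepsilon}$ admits a stable critical point $(P_1,\dots,P_m)$. So the actual content of the proof is: \emph{for $\varepsilon$ small, $\mathcal{KR}_{\Omega_\varepsilon}$ possesses a stable critical point}. The geometric intuition is that, as $\varepsilon\to 0$, the dumbell $\Omega_\varepsilon$ degenerates into the disjoint union $\Omega_0=\Omega_1\cup\dots\cup\Omega_m$, and on a disjoint union the Kirchhoff–Routh function splits: the interaction Green's terms $G(x_j,x_h)$ for $j\neq h$ in different components tend to $-\infty$ as the channels close (or, more precisely, become negligible perturbations once one works with the Green's function of $\Omega_0$), so that $\mathcal{KR}_{\Omega_0}(x_1,\dots,x_m)=\frac12\sum_{j=1}^m \mathcal R_{\Omega_j}(x_j)$, a sum of one-variable functions each of which is the Robin function of a simply connected bounded domain.

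\textbf{Step 1: the limiting variational problem.} Each $\mathcal R_{\Omega_j}$ is a smooth function on $\Omega_j$ with $\mathcal R_{\Omega_j}(x)\to+\infty$ as $x\to\partial\Omega_j$ (a classical property of the Robin function of a bounded domain in the plane), hence it attains an interior minimum $P_j^0\in\Omega_j$. One then chooses each $\Omega_j$ — for instance a ball, as allowed by the statement — so that this minimum is \emph{nondegenerate} and \emph{unique}; for a ball the Robin function is radially symmetric and strictly convex near the center, so $P_j^0$ is the center and is a nondegenerate minimum. Thus $(P_1^0,\dots,P_m^0)$ is a nondegenerate (in particular stable) critical point of $\mathcal{KR}_{\Omega_0}$.

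\textbf{Step 2: stability under the perturbation $\varepsilon\to 0$.} Here one shows that the Green's function $G_{\Omega_\varepsilon}$, hence $H_{\Omega_\varepsilon}$ and $\mathcal R_{\Omega_\varepsilon}$, converge in $C^2_{loc}$ on compact subsets of $\Omega_0$ to the corresponding objects for $\Omega_0$ as $\varepsilon\to 0$ — this is standard domain-perturbation/Mosco-type convergence for the Dirichlet Laplacian on the expanding family $\Omega_\varepsilon\nearrow$ (after removing the vanishing channel $V_\varepsilon$), and is precisely the kind of estimate carried out in \cite{egp}. Consequently $\mathcal{KR}_{\Omega_\varepsilon}\to\mathcal{KR}_{\Omega_0}$ in $C^1$ (indeed $C^2$) on a fixed compact neighborhood $\prod_j \overline{B_r(P_j^0)}$. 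Since $(P_1^0,\dots,P_m^0)$ is a nondegenerate critical point of the limit, a standard degree/implicit-function argument produces, for all $\varepsilon$ small, a critical point $(P_1^\varepsilon,\dots,P_m^\varepsilon)$ of $\mathcal{KR}_{\Omega_\varepsilon}$ near $(P_1^0,\dots,P_m^0)$ which is again nondegenerate, hence stable. Alternatively one argues directly that the local minimum persists: $\min$ over $\prod_j \overline{B_r(P_j^0)}$ of $\mathcal{KR}_{\Omega_\varepsilon}$ is attained in the interior for $\varepsilon$ small, giving a stable critical point without needing nondegeneracy.

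\textbf{Step 3: conclude via \cite{egp}.} Applying the existence result of \cite{egp} (relaxing \cite{bapa} to the stable case) to $\Omega=\Omega_\varepsilon$ with the stable critical point $(P_1^\varepsilon,\dots,P_m^\varepsilon)$ of $\mathcal{KR}_{\Omega_\varepsilon}$ yields, for $\lambda$ small, a family $u_\lambda$ solving \eqref{1} in $\Omega_\varepsilon$ and blowing up exactly at the $m$ points $P_j^\varepsilon$. Fixing one such small $\varepsilon$ gives the $m$-dumbell $\Omega_\varepsilon$ claimed in the Lemma.

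\textbf{Main obstacle.} The only genuinely delicate point is Step 2 — quantifying the convergence $\mathcal{KR}_{\Omega_\varepsilon}\to\mathcal{KR}_{\Omega_0}$ strongly enough (in $C^1$, uniformly on the relevant compact set) to transfer the critical point, since the domains $\Omega_\varepsilon$ are thin-channel perturbations and the Green's function is singular on the diagonal. This is exactly the technical heart of \cite{egp}, so I would cite it rather than reprove it; everything else (properties of the Robin function, the elementary structure of $\mathcal{KR}_{\Omega_0}$, the persistence of a nondegenerate critical point under $C^1$ perturbation) is routine.
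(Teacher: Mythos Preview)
Your approach is exactly the paper's: reduce to a stable critical point of $\mathcal{KR}_{\Omega_0}$, show it persists for small $\varepsilon$, and invoke \cite{egp}. However, you have the sign backwards in Step~1. The Robin function of a bounded planar domain satisfies $\mathcal R_{\Omega_j}(x)\to -\infty$ (not $+\infty$) as $x\to\partial\Omega_j$; for the unit ball one computes $\mathcal R(x)=\frac1{2\pi}\log(1-|x|^2)$, which is strictly \emph{concave} near the center. Hence each $\mathcal R_{\Omega_j}$ has an interior \emph{maximum}, and $(P_1^0,\dots,P_m^0)$ is a strict local \emph{maximum} of $\mathcal{KR}_{\Omega_0}$ --- this is precisely what the paper states. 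Relatedly, the cross terms $G_{\Omega_\varepsilon}(x_j,x_h)$ for points in distinct components tend to $0$ (not $-\infty$) as $\varepsilon\to0$, since the Dirichlet Green's function of the disconnected limit $\Omega_0$ vanishes between different components; your parenthetical ``negligible perturbations'' is the correct picture. With these signs corrected (minimum $\to$ maximum throughout, including the alternative persistence argument at the end of Step~2), your proof coincides with the paper's.
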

\begin{proof}

The proof follows as in Theorem 5.5 of \cite{egp} observing that the $Kirkhhoff-Routh$  for the limit domain $\Omega_0$ has a strict local maximum.  
From this they deduce that when $\e$ is small enough, the $Kirkhhoff-Routh$ function $\mathcal {KR}_{\Omega_\e}(x_1,\dots,x_m)$ also has a strict local maximum in $(P_1,\dots,P_m)$ which is stable.  By  \cite{egp} it generates a family of solutions $u_\l$ to \eqref{1} that blow-up at  $\{P_1,\dots,P_m\}$ as $\l\to 0$.
\end{proof}
\begin{remark}\label{rem-convesso}
We can choose the domains $\Omega_j$ that are convex for every $j=1,\dots,m$. In this case by a result of Caffarelli and Friedman in \cite{cf}, the unique critical point of the Robin Function in $\Omega_j$ is nondegenerate. This gives the nondegeneracy of the critical point of the $Kirchhoff-Routh$ function of $\Omega_\e$ at $\{P_1,\dots,P_m\}$. In this case the existence of a family of  blowing up 
solutions $u_\l$ follows by \cite{bapa}.
\end{remark}
\begin{remark}\label{remeps}
The smallness of $\e$ is only need to have the existence of the solution $u$ to \eqref{1}.  
From now we fix such a $\e$ and denote $\Omega_\varepsilon$ by $\Om$.
\end{remark}

We are in position to construct the domain $\tilde\Om$ and the family of
solutions $ u_\l$ to \eqref{1} when $k=0$. The final example will be obtained modifying it appropriately.
\begin{proposition}\label{dumb}
If the dumbell 
is symmetric with respect to the $x$-axis then, for $\lambda$ small enough, there exists a family of solutions $u_\l$, which blow up at $m$ points $P_1,\dots,P_m$ as $\l\to 0$, symmetric with respect to the $x$-axis, with exactly $m-1$ nondegenerate saddle points and $m$ maxima in $\Om$.
\end{proposition}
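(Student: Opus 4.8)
The plan is to exploit the symmetry of the dumbbell with respect to the $x$-axis together with the uniqueness-type estimates already obtained. First I would fix the symmetric $m$-dumbbell $\Om$ and the family $u_\l$ produced by Lemma \ref{prop-ex1} (taking the $\Omega_j$ convex as in Remark \ref{rem-convesso}, so that the blow-up points $(P_1,\dots,P_m)$ form a nondegenerate critical point of $\mathcal{KR}_\Om$); in particular, by Theorem \ref{T5}, $u_\l$ is nondegenerate for $\l$ small. Because the whole construction is symmetric and each $P_i$ lies on the $x$-axis, I would first argue that one may choose $u_\l$ itself to be symmetric in $y$: the blowing-up solution is obtained by a Lyapunov–Schmidt reduction around the symmetric approximate solution, and the reduced functional inherits the $\mathbb Z_2$-symmetry, so the critical point can be taken in the symmetric class. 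This forces $x_{i,\l}\in\{y=0\}$ and forces the harmonic limit $K(x)=8\pi\sum_j G(x,P_j)$ to be symmetric in $y$ as well.

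Next I would locate the critical points of the limit function $K$ on the symmetry axis. Since $\Om$ is simply connected ($k=0$), Corollary \ref{prop-uguali} gives $\sum_{z_j\in\mathcal C}\operatorname{index}_{z_j}(\nabla K)=\chi(\Om)-m=1-m$. The key geometric claim is that $K$ restricted to the segment $\Om\cap\{y=0\}$ has exactly $m-1$ interior critical points, one in each ``neck'' between consecutive balls $\Omega_i$ and $\Omega_{i+1}$: on this segment $K$ tends to $+\infty$ at each $P_i$ and is finite and smooth away from the $P_i$, so between $P_i$ and $P_{i+1}$ it has at least one local minimum along the axis; I would show it has exactly one by a convexity/monotonicity argument for $\log$-type sums of Green's functions in the thin-neck region, or more robustly by the following index bookkeeping. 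Each such axis critical point $z_i$ is, by symmetry, a critical point of $K$ with $K_{y}(z_i)=0$ automatically; since $-\Delta K=0$ there, $K_{xx}(z_i)=-K_{yy}(z_i)$, and along the axis $z_i$ is a strict local minimum of $x\mapsto K(x,0)$, so $K_{xx}(z_i)>0$, hence $K_{yy}(z_i)<0$ and $z_i$ is a nondegenerate saddle of $K$ with $\operatorname{index}_{z_i}(\nabla K)=-1$. This already produces $m-1$ nondegenerate saddles contributing $-(m-1)=1-m$ to the index sum; since by Remark \ref{RAM} every critical point of the harmonic function $K$ has index $\le -1$, the total being exactly $1-m$ forces these to be \emph{all} the critical points of $K$ in $D$.

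Then I would transfer this to $u_\l$. By Proposition \ref{prop2} the critical points of $u_\l$ in $D=\Om\setminus\cup_i B_\rho(P_i)$ have index in $\{1,0,-1\}$, and by Corollary \ref{prop-uguali} they converge to the critical points of $K$; since each critical point of $K$ here is a nondegenerate saddle, for $\l$ small $u_\l$ has, near each $z_i$, exactly one critical point, itself a nondegenerate saddle of index $-1$, and no others in $D$ (a cluster of extra critical points near $z_i$ would have total index $-1$, but since each has index in $\{-1,0,1\}$ and, by the $y$-symmetry, any off-axis critical point comes with its mirror image while $u_\l(\cdot,0)$ restricted to the axis has a nondegenerate minimum forcing the on-axis one to be the unique one — the standard argument that a nondegenerate critical point of $K$ is a $C^1$-stable zero of $\nabla u_\l$). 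Combined with Proposition \ref{lem1}, which gives exactly one nondegenerate maximum $x_{i,\l}$ in each $B_\rho(P_i)$, this yields exactly $m$ maxima and exactly $m-1$ nondegenerate saddles, and no other critical points, proving the Proposition.

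\textbf{Main obstacle.} The delicate point is the \emph{uniqueness} of the axis critical point of $K$ in each neck — i.e., that $x\mapsto K(x,0)$ has a single critical point between $P_i$ and $P_{i+1}$ — and the companion fact that $K$ has \emph{no} off-axis critical points in $D$. The index identity $\sum\operatorname{index}_{z_j}(\nabla K)=1-m$ together with $\operatorname{index}\le -1$ does the global counting for free once one knows there are at least $m-1$ saddles with index $-1$; so the real work is producing those $m-1$ axis saddles and ruling out a single axis critical point degenerating or splitting. I would handle this by using that $K$ is, up to the bounded harmonic regular part $H$, a sum $\frac{1}{2\pi}\sum_j\log|x-P_j|^{-1}$ whose restriction to the axis is strictly convex between consecutive singularities in the relevant thin-neck regime, controlling the $H$ contribution by the smallness of $\e$ as in Lemma \ref{prop-ex1}; alternatively, one argues purely by the symmetry-plus-index dichotomy above without ever proving strict convexity, which is the cleaner route and the one I would write up.
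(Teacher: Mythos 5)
Your proposal is correct and follows essentially the same route as the paper: symmetry forces the maxima onto the axis, one produces $m-1$ axis saddles, and the identity $\sum_{z_j\in\mathcal C}\mathrm{index}_{z_j}(\nabla K)=1-m$ combined with $\mathrm{index}\le-1$ for critical points of harmonic functions (Remark \ref{RAM}) forces these to be all of them, each nondegenerate of index $-1$. The only (cosmetic) difference is that the paper gets the $m-1$ saddles by applying Rolle's theorem to $u_\l$ itself between consecutive maxima $x_{i,\l}$ and then passes to the limit $K$, whereas you work directly on $K$ along the axis using $K\to+\infty$ at the $P_i$; both are fine, and your fallback to the index bookkeeping (rather than the shaky ``strict local minimum $\Rightarrow K_{xx}>0$'' step) is exactly what the paper does.
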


\begin{tikzpicture}
  \draw[blue] (0,0) circle (1cm); 
  \draw[white, line width=3cm] (2, -0.15) arc (0:16:2cm);
  \draw[blue]  (4,0) circle (1cm); 
  \draw[white, line width=3cm] (3, -0.15) arc (-4:1:3.1cm);
  \draw[blue]  (8,0) circle (1cm); 
  \draw[white, line width=3cm] (5, -0.15) arc (0:8:2cm);
   \draw[white, line width=3cm](7, -0.15) arc (0:8:2cm);
  \draw[blue]  (1,-0.15) -- (3,-0.15); 
  \draw[blue]  (1,0.15) -- (3,0.15); 
  \draw[blue]  (5,-0.15) -- (7,-0.15); 
  \draw[blue]  (5,0.15) -- (7,0.15); 
   \draw[->] (-2.5,0) -- (10,0) node[right] {$x$};
   \draw[->] (4,-2) -- (4,2) node[right] {$y$};
   \fill (0,0) circle (1pt) node[anchor=north] {$x_{1,n}$};
     \fill (4,0) circle (1pt) node[anchor=north] {$x_{2,n}$};
       \fill (8,0) circle (1pt) node[anchor=north] {$x_{3,n}$};
            \draw[red, fill=red] (2,0) circle (1pt) node[anchor=north] {$z_{1,n}$};
                 \draw[red, fill=red] (6,0) circle (1pt) node[anchor=north] {$z_{2,n}$};
\end{tikzpicture}
\vskip0.2cm
\centerline{Example of dumbell with $m=3$}

\begin{proof}
If $\Omega$ is symmetric with respect to the $x$-axis we can construct a symmetric family of solutions $u_\l$ working in the space of functions that are even with respect to the $x$-axis and following \cite{egp}.\\
By Proposition \ref{lem1} and the evenness of $u_\l$ we have that the strict maximum points $x_{1,\l},\dots, x_{m,\l}$ are on the $x$-axis. Next, using Rolle's Theorem
we get the existence of points $(z_{1,\l},0),\dots, (z_{m-1,\l},0)$ such that $\frac{\partial u_\l}{\partial x}(z_{i,\l})=0$ for $i=1,..,m-i$. Finally since $\frac{\partial u_\l}{\partial y}\big|_{y=0}=0$ we get that $z_{1,\l},\dots, z_{m-1,\l}$ are saddle points for $u_\l$ (where we are considering only the first coordinate of the points since the other is always zero). Without loss of generality assume that $x_{1,\l}<z_{1,\l}<x_{2,\l}<z_{2,\l}<\dots<x_{m,\l}$.\\
Next aim is to show that {\bf no} other critical points occur for $u_\l$ when $\lambda$ is small enough.\\
Let us consider a sequence of values $\lambda_n\to 0$.
By Proposition \ref{lem1} and \eqref{3}, up to a subsequence, $z_{i,\l_n}:= z_{i,n}$ converge to a critical points $z_i$ of $K(x)$ in \eqref{kx} verifying $P_1<z_1<P_2<z_2<\dots<P_m$. Then the limit function $K(x)$ has at least $m-1$ critical points $\{z_1,\dots,z_{m-1}\}$ in $\Omega\setminus\{P_1,\dots,P_m\}$.\\
By \eqref{3.23} we know that 
\[\sum_{z_j\in \mathcal C} index_{z_j} (\nabla K)=1-m
\]
if, as before,  $\mathcal C$ denotes the set of critical points of $K(x)$ in $\Om\setminus\{P_1,\dots,P_m\}$. Since, by Proposition \ref{prop-critical-points-K} $index_{z_j}(\nabla K)\leq -1$ for every $z_j\in \mathcal C$ and we have at least $m-1$ critical points, it can only happen that $\mathcal C=\{z_1,\dots,z_{m-1}\}$ and $ index_{z_j}(\nabla K)= -1$ for every $j=1,\dots,m-1$. Then each $z_j$ is a nondegenerate saddle point for $K(x)$.  This implies in turn that also $u_{\l_n}:= u_n$ cannot have other critical points in $\Om$. Since the result holds for any sequence $\lambda_n\to 0$ then it holds for the family $u_\l$.
\end{proof}
Now we are in position to complete the proof of Theorem \ref{prop-general}.
\begin{proof}[Proof of existence of $\tilde\Om$ and $u_\l$ of Theorem \ref{prop-general}]
We start considering the case $m=1$.  Take a bounded smooth domain $\Omega_1$ which is symmetric with respect to the $x$-axis. (We can consider one of the previous $\Omega_i$). It is well known that, since  the Robin function $\mathcal R_{\Omega_1}$ in $\Omega_1$ has a local minimum point $P_1$, there is a family of solutions to \eqref{1} in $\Omega_1$ that blow-up at a point $P_1$.

Next we add a handle $\mathcal{C}_1$ which connects two symmetric points with respect to the $x$ axis,  (see figure below).  We call it a ``lateral handle'' and the corresponding domain has $one$ hole (namely $k=1$).

\begin{tikzpicture}

\begin{scope}
\clip [] (0,0) circle [radius=2cm-0.5\pgflinewidth]; 
\fill[white]  (0,0) circle [radius=2cm];
\end{scope}
 \draw [red,thick,domain=68:292] plot ({-1.8+cos(\x)}, {sin(\x)});
   \draw [red,thick,domain=79:281] plot ({-1.8+0.8*cos(\x)}, {0.8*sin(\x)});
  \draw [blue,thick,domain=0:115] plot ({-1+cos(\x)}, {sin(\x)});
   \draw [blue,thick,domain=245:360] plot ({-1+cos(\x)}, {sin(\x)});
    \draw [blue,thick,domain=130:230] plot ({-1+cos(\x)}, {sin(\x)});
       
      \draw[->] (-4.5,-0.15) -- (1,-0.15) node[right] {$x$}; 
   \draw[->] (-1,-2) -- (-1,2) node[right] {$y$};
   \fill (-1,-0.15) circle (1pt) node[anchor=north] {$x_{1,n}$};
             
                    \fill (-3,-0.15)  node[anchor=north] {\large$\mathcal{C}_1$};
 \end{tikzpicture} 
 
As in Proposition \ref{dumb}, choosing the lateral handle sufficiently thin, we get the existence of a family of blowing-up symmetric solutions for the new domain $\tilde\Om$. 
Next the additional ``lateral handle'' provides the existence of an additional saddle point $z_{1,\l}$; this can be easily seen observing that $\frac{\partial u_\l}{\partial y}=0$ on $\mathcal{C}_1\cap\{x=0\}$ and  $u_\l=0$ on $\partial\mathcal{C}_1\cap\{x=0\}$. Hence again Rolle's Theorem provides the existence of a critical point $z_{1,\l}$ to $u_\l$ (for every $\l$ small enough).\\
Then, we get the existence of exactly $2$ critical points, all nondegenerate.\\
Iterating this procedure adding $\mathcal{C}_1,\mathcal{C}_2,..,\mathcal{C}_k$ ``lateral handles'' which are symmetric with respect to the $x$ axis (see figure 1 in the Intoduction) we construct a domain $\tilde\Om$ with $k$ holes that has a family of $1$-point blowing-up solutions $u_\l$
that have exactly $k+1$ critical points.\\
Next, we turn to the case $m\geq 2$. 
Let us consider the same symmetric dumbell as in Proposition \ref{dumb} and add a handle $\mathcal{C}_1$ which connects two symmetric points with respect to the $x$ axis, both belonging to the first component $\Omega_1$ (see figure below). 
\begin{tikzpicture}
\draw[red] (-1.97,-0.03) arc
    [start angle=0,
        end angle=350,
        x radius=0.4cm,
        y radius =0.4cm] ;

\draw[red] (-1.65,-0.15) arc
    [start angle=0,
        end angle=315,
        x radius=0.7cm,
        y radius =0.6cm] ;
\begin{scope}
\clip [] (0,0) circle [radius=1.9cm-0.5\pgflinewidth]; 
\fill[white]  (0,0) circle [radius=2cm];
\end{scope}
 \draw[blue] (0,0.05) arc
    [start angle=10,
        end angle=348,
        x radius=1cm,
        y radius =1cm] ;
        
      \draw[blue] (4,0.05) arc
    [start angle=9,
        end angle=171,
        x radius=1cm,
        y radius =1cm] ;  
        
         \draw[blue] (4,-0.35) arc
    [start angle=-13,
        end angle=-168,
        x radius=1cm,
        y radius =1cm] ;    
        
        \draw[blue] (6,0.05) arc
    [start angle=170,
        end angle=-168,
        x radius=1cm,
        y radius =1cm] ;
        
  \draw[blue]  (0,0.042) -- (2.02,0.042); 
    \draw[blue]  (0,-0.342) -- (2.02,-0.342); 
    \draw[blue]  (4,0.042) -- (6.02,0.042); 
    \draw[blue]  (4,-0.342) -- (6.02,-0.342); 
      \draw[->] (-4.5,-0.15) -- (9,-0.15) node[right] {$x$}; 
   \draw[->] (3,-2) -- (3,2) node[right] {$y$};
   \fill (-1,-0.15) circle (1pt) node[anchor=north] {$x_{1,n}$};
     \fill (3,-0.15) circle (1pt) node[anchor=north] {$x_{2,n}$};
       \fill (7,-0.15) circle (1pt) node[anchor=north] {$x_{3,n}$};
            \draw[red, fill=red] (1,-0.15) circle (1pt) node[anchor=north] {$z_{1,n}$};
                 \draw[red, fill=red] (5,-0.15) circle (1pt) node[anchor=north] {$z_{2,n}$};
                 
                    \fill (-3,-0.15)  node[anchor=north] {\large$\mathcal{C}_1$};
 \end{tikzpicture}  

As in Proposition \ref{dumb}, choosing the lateral handle sufficiently thin, we get the existence of a sequence of blowing-up symmetric solutions for the new domain $\tilde\Om$. Of course we again have the existence of at least $m$ strict maximum points $x_{1,\l},\dots, x_{m,\l}$ and $m-1$  saddle points $z_{1,\l},\dots, z_{m-1,\l}$ as in Proposition \ref{dumb} and, as in the case $m=1$, each ``lateral handle'' provides  an additional saddle point $z_{m,\l}$  to $u_\l$
As in Proposition \ref{dumb}  we get the existence of exactly $2m$ nondegenerate critical points.\\
Iterating this procedure adding $\mathcal{C}_1,\mathcal{C}_2,..,\mathcal{C}_k$ ``lateral handles'' which are symmetric with respect to the $x$ axis we construct a domain $\tilde\Om$ with $k$ holes that has a family of blowing-up solutions $u_\l$ 
that have exactly $2m+k-1$ critical points.
\end{proof}

\section{Proof of Theorems \ref{T1} and \ref{ex}}\label{S5}
\begin{proof}[Proof of Theorem \ref{T1}]
Let $\lambda_n$ be a sequence of values such that $\lambda_n\to 0$, and $u_n$ be the corresponding solution.
By Proposition \ref{prop-preuguali} we have, for $n$ large enough, that 
\[\sum_{z_j\in \mathcal C_n} index_{z_j} (\nabla u_n)=\chi(\Omega)-m
\]
where $\mathcal C_n$ is the sets of critical points of $u_n$ in $D$. Let us consider the following cases,
\vskip0.2cm
\begin{itemize}
\item If $m=1$ and $\Om$ is simply connected ($\chi(\Omega)=1$) then 
$$\sum_{z_j\in \mathcal C_n} index_{z_j} (\nabla u_n)=0$$
 and, by Corollary \ref{prop-uguali} this implies that $\sum_{z_j\in \mathcal C} index_{z_j} (\nabla K)=0$, which, together with the properties of the critical points of $K(x)$ in $D$ in Proposition \ref{prop2} implies that $K(x)$ has no critical points in $D$. The $C^1$ convergence of $u_n$ to $K$ in $D$ implies that also $u_n$ has no critical points in $D$ for $n$ large enough. Then the unique critical point is the maximum  $x_{1,n}$ which is nondegenerate by Lemma \ref{lem1}.
\vskip0.2cm
\item If $m=2$ and $\Om$ is simply connected we have that 
$$\sum_{z_j\in \mathcal C_n} index_{z_j} (\nabla u_n)=-1.$$
 Then Proposition \ref{prop-critical-points-K} implies that $K(x)$ has a unique nondegenerate critical point $x_0$ and the $C^2$ convergence of $u_n$ to $K$ implies that $u_n$ has a nondegenerate critical point $x_{0,n}\to x_0$. This gives the uniqueness and nondegeneracy of the critical point of $u_n$.
 \item If $m=1$ and $\Om$ has one hole we have that $\chi(\Omega)=0$ and so
 $$\sum_{z_j\in \mathcal C_n} index_{z_j} (\nabla u_n)=-1.$$
 Arguing as in the previous step we deduce the existence of $exactly$ one nondegenerate critical point in $D$. This fact, jointly with the uniqueness of the maximum point in the ball $B_\rho(P_1)$, gives the claim.
\end{itemize}
Since the result holds for every sequence $\lambda_n$ we get the claim for any $\l$ small enough.
\end{proof}
Now we give the proof of Theorem \ref{ex}. As in the Theorem \ref{prop-general}, the construction of the domain needs some  definitions and lemmas.

As in the previous section we start by considering a $contractible$ domains.
\vskip0.2cm
{\bf Case 1: contractible domains}
\vskip0.2cm
Let us consider a regular polygon with $m$ sides of length $1$ and barycenter at the origin $O$.  At each vertex $Q_i$ of the polygon, $i=1,..,m$ we place a ball $B_i$ of radius $r<\frac{1}{4}$ centered at $Q_i$. Then we let $\Omega_0=\cup_{i=1}^mB_i$ and, 
by Remark \ref{rem-convesso}, the point $(Q_1,..,Q_m)$ is nondegenerate for the $Kirkhhoff-Routh$ function in $\Omega_0$.  
Now, we connect each component $B_i$ of $\Omega_0$ with the barycenter by $m$ straight thin tubes of thickness $\epsilon$
(see figure below for $m=3$). Finally we smooth the corners at the  boundary of $B_i$ to obtain a smooth set $\Omega_\epsilon$.\\
 Alternatively, instead of balls  we can consider $m$ copies of a convex domain. 
 \begin{center}
\begin{tikzpicture}[scale=0.5]

\draw[blue] (0,0) arc
    [start angle=-20,
        end angle=320,
        x radius=1cm,
        y radius =1cm] ;
        
            \draw[blue] (6,0) arc
    [start angle=200,
        end angle=-140,
        x radius=1cm,
        y radius =1cm] ;

        

        
           \draw[blue] (3,-5) arc
    [start angle=100,
        end angle=440,
        x radius=1cm,
        y radius =1cm] ;

 \draw[blue]  (0,0) -- (3.1,-1.7); 
  \draw[blue]  (-0.16,-0.3) -- (2.95,-2.1); 
    \draw[blue]  (6,-0.025) -- (3.1,-1.7); 
   \draw[blue]  (6.2,-0.3) --(3.3,-2.1) ;  
      \draw[blue]  (3.3,-2.1) -- (3.35,-5);  
      \draw[blue]  (2.95,-2.1) -- (3,-5);  

   \fill (-1,0.8)  node[anchor=north] {$B_1$};
     \fill (7,0.8)  node[anchor=north] {$B_2$};
      \fill (3.2,-5.5)  node[anchor=north] {$B_3$};
      \fill (8,-2.8)  node[anchor=north] {The domain $\Om_\e$ for $m=3$};

 \end{tikzpicture}  
\end{center}

The domain $ \Omega_\e$ is contractible for every $\e$ and it is invariant by the action of the 
the group of rotations that fix the barycenter and rotate by an angle of $\theta=\frac {2\pi}m$.
As  in the previous example then we have
\begin{lemma}\label{lemma-6.1}
For $\e$ small enough, problem \eqref{1} has in $\Omega_\varepsilon$ at least one family 
of solutions $u_\l$ which blow up at the points $\{P_1,..,P_m\}$ as $\l\to 0$. These solutions $u_\l$ are invariant by a rotation of angle $\theta=\frac {2\pi}m$.
\end{lemma}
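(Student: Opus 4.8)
The plan is to follow the proof of Lemma \ref{prop-ex1} almost verbatim, with two modifications: I would use the \emph{nondegeneracy} of the relevant critical point of the Kirchhoff--Routh function (rather than mere stability), so as to be able to invoke \cite{bapa}, and I would add a short symmetry step producing the rotational invariance of $u_\l$.

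First I would analyze the limit domain $\Omega_0=\cup_{i=1}^m B_i$. Being a disjoint union of congruent convex pieces, its Green function decouples, so $\mathcal{KR}_{\Omega_0}(x_1,\dots,x_m)=\frac12\sum_{i=1}^m \mathcal R_{B_i}(x_i)$; by the Caffarelli--Friedman result recalled in Remark \ref{rem-convesso}, each $\mathcal R_{B_i}$ has a unique, nondegenerate critical point (the center $Q_i$ of $B_i$), hence $(Q_1,\dots,Q_m)$ is a nondegenerate critical point of $\mathcal{KR}_{\Omega_0}$. Next, exactly as in the perturbation argument of \cite{egp} used in Lemma \ref{prop-ex1}, as $\e\to0$ the thin connecting tubes become negligible and $\mathcal{KR}_{\Omega_\e}\to\mathcal{KR}_{\Omega_0}$ in $C^2$ on a fixed neighborhood of $(Q_1,\dots,Q_m)$; since a nondegenerate critical point persists under $C^2$-small perturbations, for $\e$ small there is a nondegenerate critical point $(P_1^\e,\dots,P_m^\e)$ of $\mathcal{KR}_{\Omega_\e}$, converging to $(Q_1,\dots,Q_m)$ and unique in a fixed small neighborhood $U$ of $(Q_1,\dots,Q_m)$. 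By \cite{bapa} this nondegenerate critical point generates, for $\l$ small enough, a family $u_\l$ of solutions of \eqref{1} blowing up at $\{P_1^\e,\dots,P_m^\e\}$; moreover the Lyapunov--Schmidt reduction, together with the nondegeneracy in Theorem \ref{T5}, makes $u_\l$ the unique $m$-peak solution of \eqref{1} whose concentration set lies in $U$, for $\l$ small.

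For the rotational invariance I would use that, by construction, $\Omega_\e$ is invariant under the rotation $R_\theta$ by $\theta=\frac{2\pi}m$ about the barycenter $O$. Then $\mathcal{KR}_{\Omega_\e}$ is invariant under the corresponding diagonal action (followed by cyclic relabeling) of $R_\theta$ on $(x_1,\dots,x_m)$, and $U$ may be chosen invariant under it. Since the critical point in $U$ is unique, it must be a fixed point of this action, i.e.\ $P_{i+1}^\e=R_\theta P_i^\e$ (indices mod $m$); in particular the concentration set $\{P_1^\e,\dots,P_m^\e\}$ is $R_\theta$-invariant. Consequently $u_\l\circ R_\theta^{-1}$ is again a solution of \eqref{1} in $\Omega_\e$, it is of $m$-peak type with the same concentration set, and by the uniqueness just stated it coincides with $u_\l$; that is, $u_\l$ is invariant under rotation of angle $\theta$. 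Fixing such an $\e$ once and for all (it is needed only for the existence of $u_\l$) completes the proof.

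I expect the only non-routine point to be the domain-perturbation step: the $C^2$ convergence $\mathcal{KR}_{\Omega_\e}\to\mathcal{KR}_{\Omega_0}$ on a neighborhood of the polygon configuration and the resulting persistence of the nondegenerate critical point. Once this is granted, the existence is the cited result of \cite{bapa} (with Theorem \ref{T5} supplying nondegeneracy), and the rotational invariance is the short uniqueness argument above.
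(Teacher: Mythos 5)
Your proof is essentially correct on the existence part and follows the same skeleton as the paper (and as Lemma \ref{prop-ex1} together with Remark \ref{rem-convesso}): the Kirchhoff--Routh function of the disconnected limit domain $\Omega_0$ decouples, $(Q_1,\dots,Q_m)$ is a nondegenerate critical point by Caffarelli--Friedman, the $C^2$ convergence $\mathcal{KR}_{\Omega_\e}\to\mathcal{KR}_{\Omega_0}$ from the dumbbell-type perturbation theory of \cite{egp} makes this critical point persist, and \cite{bapa} then produces the blowing-up family. Where you genuinely diverge from the paper is the symmetry step. The paper obtains the rotational invariance \emph{by construction}: it runs the reduction in the space of functions invariant under the rotation of angle $2\pi/m$, using that $\mathcal{KR}_{\Omega_\e}$ is itself invariant. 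You instead derive the invariance \emph{a posteriori} from a uniqueness statement. The caveat is that the uniqueness you invoke --- ``$u_\l$ is the unique $m$-peak solution of \eqref{1} whose concentration set lies in $U$'' --- is a genuine local uniqueness theorem for blow-up solutions and does not follow from Lyapunov--Schmidt plus the nondegeneracy of Theorem \ref{T5} alone; proving that an \emph{arbitrary} $m$-peak solution lies in the ansatz class is the hard part of such results. For your purposes, however, the full strength is not needed: the competitor is $u_\l\circ R_\theta^{-1}$, which automatically inherits the form ``projected bubbles at $R_\theta$-permuted points plus a small remainder'' because the projection commutes with isometries of $\Omega_\e$; uniqueness of the remainder and of the reduced critical point near the nondegenerate configuration then forces $u_\l\circ R_\theta^{-1}=u_\l$. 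If you state the uniqueness in this restricted (ansatz-class) form, your argument closes; the paper's equivariant construction simply avoids having to say any of this.
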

\begin{proof}
The proof follows again as in Theorem 5.5 of \cite{egp}. Here we set our problem in the space of functions invariant by a rotation of angle $\theta=\frac {2\pi}m$ and observe that the $Kirkhhoff-Routh$ function $\mathcal {KR}_{\Omega_\e}$, which is invariant by a rotation of angle $\theta=\frac {2\pi}m$,  has a nondegenerate critical point in
in $(P_1,..,P_m)$ (which is near $(Q_1,\dots,Q_m)$). 
\end{proof}
\begin{remark}
As in Remark \ref{remeps} we fix $\e$ small and set $\Om=\Omega_\e$.
\end{remark}

In next proposition we compute the number of critical points.
\begin{proposition}\label{prop1}
For every $m\geq 3$ there exists a domain $\Om$ such that \eqref{1} has at least one family of solutions $u_\l$ which blow up at $m$ points $\{P_1,\dots,P_m\}$ in $\Omega$ as $\lambda\to 0$.   The solutions $u_\l$ are invariant by a rotation of angle $\theta=\frac {2\pi}m$ and have at least $2m+1$ nondegenerate critical points whose  $m$ are saddle points and $m+1$ maxima (one of the maxima coincides with the barycenter $O$). Moreover as $\l\to 0$ the saddle points converge to the the barycenter $O$ which becomes a degenerate saddle of index $m-1$ for the function $K(x)$. 
\end{proposition}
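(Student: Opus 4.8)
The plan is to run the degree-theoretic scheme of Sections \ref{S3}--\ref{S4}, but \emph{localised at the barycenter} $O$, using the rotational symmetry of $\Omega$ and $u_\l$. It is enough to argue along a sequence $\l_n\to0$; write $u_n$ for the corresponding solution from Lemma \ref{lemma-6.1}, and let $R_\theta$ be the rotation of angle $\theta=\tfrac{2\pi}{m}$ about $O$, so that $K(x)$ in \eqref{kx} and every $u_n$ are $R_\theta$-invariant. The first point is that $O$ is a critical point of both $K$ and every $u_n$: since $m\ge3$, no nonzero vector of $\R^2$ is fixed by $R_\theta$, hence $\nabla K(O)=0=\nabla u_n(O)$. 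Differentiating the identity $u_n\circ R_\theta=u_n$ twice at $O$ shows that $D^2u_n(O)$ commutes with $R_\theta$; a symmetric $2\times2$ matrix commuting with a nontrivial planar rotation is a multiple of the identity, and $\operatorname{tr}D^2u_n(O)=\Delta u_n(O)=-\l_ne^{u_n(O)}<0$, so $D^2u_n(O)$ is negative definite. Thus $O$ is a \emph{nondegenerate} local maximum of $u_n$, with $\operatorname{index}_O(\nabla u_n)=1$, and together with the $m$ nondegenerate maxima $x_{i,n}\in B_\rho(P_i)$ of Proposition \ref{lem1} we already get $m+1$ nondegenerate maxima.

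Next I would determine the nature of $O$ as a critical point of $K$. Writing $z=x+iy$ and choosing a holomorphic primitive $\Phi$ with $K=\operatorname{Re}\Phi$ near $O$, the $R_\theta$-invariance of $K$ forces $\Phi(e^{i\theta}z)=\Phi(z)+ic$ for a real constant $c$, hence $\Phi'(e^{i\theta}z)=e^{-i\theta}\Phi'(z)$; comparing leading terms, the order of vanishing at $O$ of $\partial_z K=\tfrac12\Phi'$ is of the form $m\ell-1$ with $\ell\ge1$, in particular $\ge m-1$, so by Remark \ref{RAM} $\operatorname{index}_O(\nabla K)\le-(m-1)$. Since $\Omega$ is contractible, $\chi(\Omega)=1$, and Corollary \ref{prop-uguali} gives $\sum_{z_j\in\mathcal C}\operatorname{index}_{z_j}(\nabla K)=1-m$, where $\mathcal C$ is the set of critical points of $K$ in $D$; as every term is $\le-1$ by Proposition \ref{prop-critical-points-K} and the one at $O$ is $\le-(m-1)$, we must have $\operatorname{index}_O(\nabla K)=-(m-1)$ and $\mathcal C=\{O\}$. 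Since $m-1\ge2$, $O$ is a degenerate saddle of index $m-1$ for $K$, which is the last assertion of the Proposition.

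For the count of critical points of $u_n$ I would fix $r>0$ small with $\overline{B_r(O)}\subset D$, containing no $P_j$ and no critical point of $K$ other than $O$. On the compact set $\overline D\setminus B_r(O)$ one has $|\nabla K|\ge c>0$ (by the previous paragraph, Proposition \ref{prop-critical-points-K} and the Hopf Lemma on $\partial\Omega$), so the $C^1$-convergence $u_n\to K$ on $\overline D$ (from \eqref{3}), together with Proposition \ref{lem1} on $\overline{B_\rho(P_i)}$ and the Hopf Lemma, implies that for $n$ large all critical points of $u_n$ in $D$ --- which are finitely many and isolated by Proposition \ref{prop2} --- lie in $B_r(O)$. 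On $\partial B_r(O)$ the fields $\nabla u_n$ and $\nabla K$ are nonvanishing and $C^0$-close, hence homotopic, so
\begin{equation*}
\sum_{z\in\mathcal C_n\cap B_r(O)}\operatorname{index}_z(\nabla u_n)=\deg(\nabla u_n,B_r(O),0)=\deg(\nabla K,B_r(O),0)=-(m-1).
\end{equation*}
The critical points of $u_n$ in $B_r(O)\setminus\{O\}$ split into free $\langle R_\theta\rangle$-orbits of cardinality $m$, on each of which the local index is constant; subtracting the contribution $+1$ of $O$ leaves total index $-m$ among them, i.e.\ a sum of orbit-indices (each in $\{1,0,-1\}$ by Proposition \ref{prop2}) equal to $-1$, so at least one orbit has index $-1$. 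This gives at least $m$ nondegenerate saddle points of $u_n$, converging to $O$ by the previous paragraph, hence at least $2m+1$ nondegenerate critical points of the stated types; running the argument over every sequence $\l_n\to0$ yields the conclusion for the family $u_\l$.

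The point that this scheme does \emph{not} settle --- and which is not needed for the statement as phrased --- is the sharp enumeration: that $O$ plus exactly one orbit of $m$ saddles (and nothing else) sit near $O$. Establishing this would require a localised blow-up of $u_n$ around $O$ at the scale at which these saddles escape to infinity, identifying the limit profile as, essentially, a single Liouville bubble superposed on the model harmonic saddle $\operatorname{Re}(c\,z^m)$, and checking that such a profile has precisely $m$ nondegenerate critical points off its center. I expect this blow-up to be the only genuine difficulty; everything else reduces to symmetry plus the degree theory already in place.
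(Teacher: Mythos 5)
Your argument is correct and establishes every assertion of the Proposition; its skeleton (rotational invariance forces the critical set of $u_n$ in $D$ to split into the fixed point $O$ plus free orbits of cardinality $m$, and the Poincar\'e--Hopf count $1-m$ then extorts one orbit of nondegenerate saddles together with an index-$+1$ point at $O$) is the same as the paper's. You differ in two sub-arguments, both to your advantage. The paper obtains the maximum at $O$ only \emph{a posteriori} from the index bookkeeping: it first shows that an $m$-th saddle must exist (otherwise the at most $m-1$ index-$(-1)$ points would all have to sit at the fixed point $O$, giving $|\mathrm{index}_O|\ge 2$ against \eqref{ind}), and then that the residual index $+1$ must be carried by $O$ because non-fixed orbits contribute multiples of $m$. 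Your direct computation --- $D^2u_n(O)$ commutes with the rotation, hence is a multiple of the identity with trace $\Delta u_n(O)=-\l_n e^{u_n(O)}<0$ --- is shorter and, more importantly, delivers the \emph{nondegeneracy} of the central maximum, which the statement asserts but which does not follow from Proposition \ref{prop2} (an index-$1$ point is only shown there to be a maximum); so you have in fact closed a small gap in the paper's own proof. For the last assertion the paper argues that $K$, being harmonic, cannot have a maximum at $O$, hence $O$ is a saddle, and that any additional orbit of critical points of $K$ would drive the total index below $1-m$; your holomorphic-primitive computation of the vanishing order of $\partial_z K$ at $O$ (congruent to $-1$ modulo $m$, hence at least $m-1$) reaches the same conclusion $\mathcal C=\{O\}$ and $\mathrm{index}_O(\nabla K)=-(m-1)$ a bit more quantitatively. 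Localizing the degree count to $B_r(O)$ instead of working with the global identity \eqref{nb} over $D$ is a cosmetic difference, and your closing caveat about the non-sharpness of the enumeration matches the paper, which likewise only claims ``at least'' and remarks that further orbits of maxima and saddles cannot be excluded.
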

\begin{center}
\begin{tikzpicture}[scale=0.5]

\draw[blue] (0,0) arc
    [start angle=-20,
        end angle=320,
        x radius=1cm,
        y radius =1cm] ;
        
            \draw[blue] (6,0) arc
    [start angle=200,
        end angle=-140,
        x radius=1cm,
        y radius =1cm] ;

           \draw[blue] (3,-5) arc
    [start angle=100,
        end angle=440,
        x radius=1cm,
        y radius =1cm] ;

 \draw[blue]  (0,0) -- (3.1,-1.7); 
  \draw[blue]  (-0.16,-0.3) -- (2.95,-2.1); 
    \draw[blue]  (6,-0.025) -- (3.1,-1.7); 
   \draw[blue]  (6.2,-0.3) --(3.3,-2.1) ;  
      \draw[blue]  (3.3,-2.1) -- (3.35,-5);  
      \draw[blue]  (2.95,-2.1) -- (3,-5);  

   \fill (-1,0.5) circle (1pt) node[anchor=north] {$x_{1,\l}$};
     \fill (7,0.5) circle (1pt) node[anchor=north] {$x_{2,\l}$};
      \fill (3.1,-1.9) circle (2pt) node[anchor=north] {$x_{4,\l}\equiv O$};
      \fill (3.2,-5.8) circle (1pt) node[anchor=north] {$x_{3,\l}$};
           \draw[red, fill=red] (1.3,-0.95) circle (1pt) node[anchor=north] {$z_{1,\l}$};
                 \draw[red, fill=red] (4.8,-0.95) circle (1pt) node[anchor=north] {$z_{2,\l}$};
                  \draw[red, fill=red] (3.15,-3.95) circle (1pt) node[anchor=north] {$z_{3,\l}$};
       \fill (12,-2.8)  node[anchor=north] {Critical points of $u_\l$ in $\Om$ for $m=3$};                
 \end{tikzpicture}  
\end{center}
\begin{proof}
The existence of the solutions $u_\l$ is given by Lemma \ref{lemma-6.1}.  We only have to compute the number of the critical points along a sequence $\lambda_n\to 0$.
By formula \eqref{nb} we get
\begin{equation}\label{fo}
\sum_{z_j\in \mathcal C_n} index_{z_j} (\nabla u_n)=1-m
\end{equation}
where we recall that $\mathcal C_n$ is the set of critical points of $u_n$ in $\Om\setminus\cup_{i=1}^m B_{\rho}(P_i)$, for $\rho$ small enough. Using \eqref{ind} we deduce that $u_n$ has at least $m-1$ saddle points in  $\Om\setminus\cup_{i=1}^m B_{\rho}(P_i)$.  Let us show that $u_n$ has at least a $m-th$ saddle points. Indeed, using the simmetry of $u_n$, if no other saddle point occurs, then the $m-1$ saddle points must coincide with the barycenter $O$. This is not possible because we shall have a critical point of index $m-1\ge2$, a contradiction with \eqref{ind}.

Hence $u_n$ admits at least $m$ saddle points of index $-1$.  But if no other critical point occurs, we have that $\{z_{1,n},..,z_{m,n}\}= \mathcal C_n$ and $\sum index_{z_{j,n}} (\nabla u_n)=-m$ a contradiction with \eqref{fo}. 

Therefore, there must be at least one additional critical point of index $1$ (a maximum)  to $u_n$ which is necessarily the origin. Otherwise, by symmetry reasons, there would be other $m$ maximum points and the total degree of $\nabla u_n$ should be $0$, again a contradiction. So the maximum point is located at the origin.
 
This proves the claim on the number of critical points. Observe that (although it seems unlikely) we cannot exclude the existence of other $m$ maxima and $m$ saddles of $u_n$.

We end the proof discussing  the behavior of the saddle points $\{z_{1,n},..,z_{m,n}\}$ when $n\to \infty,$ ( $\lambda_n \to0$). Passing to the limit we get
\[\sum_{z_j\in \mathcal C} index_{z_j} (\nabla K)=1-m
\]
where $\mathcal C$ is the set of critical points of $K(x)$ in $\Om\setminus\cup_{i=1}^m B_{\rho}(P_i)$.
However, since $K(x)$ is a harmonic function, the origin cannot be a point of maximum, but rather must be a saddle. This means that as $\l_n \to0$, the $m$ saddles $z_{j,n}$ must collapse to the barycenter, i.e. for any $j=1,..,m$, $z_{j,n} \to O$ as $n \to\infty$, and the point $O$ becomes a degenerate saddle point of index $1-m$ for $K(x)$. \\
Since $K(x)$ admits a unique critical point which is the barycenter, then every critical point of $u_n$ in $\mathcal C_n$ must collapse there.
\end{proof}
{\bf Case 2: domains with holes, a special case}
\vskip0.2cm
Before proving the general result for a domain with $k$ holes, let us consider the special case $k=hm$ for some positive integer $h$. Although this is a particular case, there is the advantage that it is much simpler.

Indeed this case can be proved straightforwardly, adding $m$ ``lateral handles'' to the domain constructed in Proposition \ref{prop1},  and reasoning as in the proof of Theorem \ref{ex}.
\begin{proposition}\label{prop-manici-tanti-critici-1}
For every $m\geq 3$
there exists a domain $\Omega$ with $hm$ holes such problem \eqref{1} has at least one family of solutions $u_\l$ which blow up at $m\geq 3$ points $\{P_1,\dots,P_m\}$ in $\Omega$ as $\l\to 0$. The solutions $u_\l$ are invariant by a rotation of angle $\theta=\frac {2\pi}m$ and have at least $(h+2)m+1 $ critical points whose  $(h+1)m$ are nondegenerate saddle points and $m+1$ maxima (one of the maxima coincides with the barycenter $O$).
 Moreover as $\l \to0$, $m$ saddle points converge to the the barycenter $O$ which becomes a degenerate saddle point of index $m-1$ for the function $K(x)$. 
\end{proposition}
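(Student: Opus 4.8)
The plan is to repeat almost verbatim the construction and the index count of Proposition \ref{prop1}; the only genuinely new feature is the topology of the ambient domain, and that is exactly why the case $k=hm$ is much simpler than the general one.

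First I would build the domain. Start from the contractible, dihedrally symmetric domain of Proposition \ref{prop1} (a regular $m$-gon with a convex piece $B_i$, e.g. a ball, centered at each vertex $Q_i$, joined to the barycenter $O$ by radial thin tubes) and enrich its topology as in the proof of Theorem \ref{ex}. Fix the reflection axis $\ell_1$ of the $m$-gon through $Q_1$ and $O$ and glue to $B_1$ a family of $h$ pairwise disjoint, thin ``lateral handles'' $\mathcal C_1^{(1)},\dots,\mathcal C_1^{(h)}$, each a narrow loop attached to $\partial B_1$ and symmetric with respect to $\ell_1$ (for instance nested loops of increasing size avoiding the tubes). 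Applying the $m$ rotations of angle $\theta=\frac{2\pi}{m}$ one obtains $hm$ handles and a domain $\Omega$ invariant under the full dihedral group, with exactly $hm$ holes (so $\chi(\Omega)=1-hm$), coinciding with the original domain outside thin neighbourhoods of the $\partial B_i$. As in Lemma \ref{prop-ex1}, Lemma \ref{lemma-6.1} and Remark \ref{rem-convesso}, for handles thin enough $\mathcal{KR}_\Omega$ still has a symmetric critical point $(P_1,\dots,P_m)$ near $(Q_1,\dots,Q_m)$, nondegenerate in the symmetric class, so by \cite{bapa} (or \cite{egp}, working in the space of functions with this symmetry) problem \eqref{1} has a family $u_\l$, invariant under the rotation of angle $\theta$, blowing up at $\{P_1,\dots,P_m\}$ as $\l\to 0$.

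Next I would locate critical points along a sequence $\l_n\to0$. By Proposition \ref{lem1}, $u_n$ has exactly one critical point $x_{i,n}$ in each $B_\rho(P_i)$, a nondegenerate maximum, and by symmetry the $x_{i,n}$ lie on the rays $OQ_i$ and form a free orbit of the rotation group. Then, as in Theorem \ref{ex}: inside each handle $\mathcal C$, the segment $\mathcal C\cap\ell$ ($\ell$ being the symmetry axis of $\mathcal C$) has both endpoints on $\partial\Omega$ — one on the outer boundary, one on the boundary of the hole enclosed by $\mathcal C$ — hence $u_n=0$ there; since $u_n>0$ inside, Rolle's theorem yields a critical point of $u_n|_\ell$, which is a genuine critical point of $u_n$ because $\partial u_n/\partial\nu=0$ along $\ell$ by symmetry. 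So $u_n$ has at least $hm$ critical points in the handles, all in $D=\Omega\setminus\cup_i B_\rho(P_i)$. Now I would close the count: by \eqref{nb} with $k=hm$ the indices of the critical points of $u_n$ in $D$ add up to $1-hm-m$ and $index_{x_{i,n}}(\nabla u_n)=1$, so by the Poincaré--Hopf Theorem \ref{teo-hopf} the sum of all indices is $\chi(\Omega)=1-hm$. Decomposing the (finite, by Proposition \ref{prop2}) critical set into orbits of the rotation group, each free orbit contributes a multiple of $m$, hence $index_O(\nabla u_n)\equiv 1-hm\equiv 1\pmod m$ (with value $0$ if $O$ is not critical); since $m\geq3$ and every index lies in $\{-1,0,1\}$ by Proposition \ref{prop2}, this forces $index_O(\nabla u_n)=1$, i.e. $O$ is a critical point, hence a maximum. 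Thus $u_n$ has at least $m+1$ maxima; writing the number of maxima as $m+1+a$ with $a\geq0$, Poincaré--Hopf gives exactly $(h+1)m+a\geq(h+1)m$ critical points of index $-1$, each a nondegenerate saddle by Proposition \ref{prop2}. Altogether $u_n$ has at least $(m+1)+(h+1)m=(h+2)m+1$ critical points, $m+1$ of them maxima (one at $O$) and $(h+1)m$ nondegenerate saddles, and since this holds along every sequence $\l_n\to0$ it holds for the family $u_\l$. For the last assertion, $u_n\to K$ in $C^1_{loc}(D)$ by \eqref{3} (with $K$ as in \eqref{kx}), so the handle critical points pass to the corresponding critical points of $K$, while the $m$ interior saddles of $u_n$ must collapse onto $O$ — a harmonic function cannot have a maximum there — so that $O$ becomes a degenerate saddle of $K$ of multiplicity $m$, exactly as in Proposition \ref{prop1}.

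The hard part will be the nondegeneracy of the $hm$ handle critical points, i.e. checking that they are saddles of index exactly $-1$: equivalently, that in a sufficiently thin handle the Hessian of the limiting harmonic function $K$ has one eigenvalue of definite sign across the handle — of order the first Dirichlet eigenvalue of the narrow cross-section — and one of the opposite sign along it, so that $u_n$, being $C^1$-close to $K$, inherits the same structure. I should stress, though, that this delicate point is not needed for the stated lower bounds: those rest only on Propositions \ref{lem1} and \ref{prop2}, the rotational symmetry, and the Poincaré--Hopf identity, as above.
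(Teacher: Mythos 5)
Your proposal is correct and follows essentially the same route as the paper: the paper's own proof is a two-line reduction to Proposition \ref{prop1} (rotational plus reflection symmetry of $\mathcal{KR}_\Omega$, hence of $u_\l$) together with the handle/Rolle argument from the proof of Theorem \ref{prop-general}, and your Poincar\'e--Hopf count with the orbit decomposition modulo $m$ is a clean, self-contained way to recover the maximum at $O$ and the lower bound of $(h+1)m$ index $-1$ (hence nondegenerate) saddles without re-running the case analysis of Proposition \ref{prop1}. One small slip in your last paragraph: when the $m$ saddles of index $-1$ collapse onto the barycenter together with the index $+1$ maximum sitting at $O$, the limiting critical point of $K$ has index $1-m$, i.e.\ multiplicity $m-1$, not $m$.
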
 
\begin{proof}
In proposition \ref{prop1} we considered solutions that are invariant under rotations by an angle $\theta=\frac {2\pi}m$. The same procedure can be applied by additionally requiring that the solutions are also invariant under reflection with respect to the line passing through the center of one of the ball $B_i$ and the barycenter $O$, since the $Kirkhhoff-Routh$ function $\mathcal {KR}_{\Omega}$ has also this symmetry.\\
Next, as in the proof of Theorem \ref{ex}, we add one or more handles to the balls that make up the domain $\Om$, as we did in the case of the dumbbell. Due to the invariance by rotation each handle creates $m$ holes, and $h$ handles produce $hm$ holes.  Similarly to the dumbbell case, for the invariance under reflection, each handle adds a critical saddle point and the claim follows.
\end{proof}
\vskip0.2cm
{\bf Case 3: domains with holes, the general case}
\vskip0.2cm
Here, for every $\lambda$ small fixed, we construct a domain $\widehat \Omega$ which has $k\geq 1$ holes and such that the solution $\widehat u_\l$ has at least $2m+k+1$ nondegenerate critical points.\\
The construction is a little bit delicate because involves different parameter $R,\l,\d$ which must be fixed independently.
\vskip0.2cm
{\bf The domain $\Om_{\d}$}. 
Let us consider the contractible domain $\Omega$ of Proposition \ref{prop1} and let $ u_\l$ be the corresponding family of solutions.  We choose a point $x_0\in \Omega$ such that $x_0\neq \{O, P_1,\dots,P_m\}$. Then $x_0$ is not a critical point of $u_\l$ if $\l$ is small enough.\\
We consider a small ball $B_R(x_0)\subset \Omega\setminus\{O,P_1,\dots,P_m\}$ where $R$ is such that 
\[\lambda^*(\Omega)\pi R^2+R\int_{\partial B_R(x_0)}\left( 12\left|\nabla 8\pi \sum_{i=1}^m G(x,P_i)\right|^2 +\lambda^*(\Omega)\left( e^{4 \sum_{i=1}^m 8\pi G(x,P_i)}+1\right)\right)
d\sigma<\frac {\e_0}4\]
where $\e_0$ is as defined in \eqref{epsilon-zero} and $\lambda^*(\Omega)$ is the maximum value such that problem \eqref{1} has solutions.\\
This fix the value of $R$.

Now we choose $\lambda$ small such that, on $\partial B_R(x_0)$,
\[ u_\l(x)\leq 2 \sum_{i=1}^m 8\pi G(x,P_i)\]
and 
\[|\nabla u_\l(x)|\leq 2\left|\nabla  \sum_{i=1}^m 8\pi G(x,P_i)\right|\]
(see \eqref{3}.) Choosing if necessary $\lambda$ smaller, we can assume
that the solutions $u_\l$ in Proposition \ref{prop1} are nondegenerate and 
have $2m+1$ nondegenerate critical point in $\Omega$ (see Theorem \ref{T5}).\\
This fix the value of  $\lambda$. \\
Next we remove a small ball $B_\d(x_0)\subset B_R(x_0)\subset \Omega$ (see figure below). We call this new domain $\Om_{\d}$.
\begin{tikzpicture}[scale=0.5]
\draw[red] (-1,-0.3) ellipse (0.2);

\draw[blue] (0,0) arc
    [start angle=-20,
        end angle=320,
        x radius=1cm,
        y radius =1cm] ;
        
            \draw[blue] (6,0) arc
    [start angle=200,
        end angle=-140,
        x radius=1cm,
        y radius =1cm] ;
            \draw[blue] (3,-5) arc
    [start angle=100,
        end angle=440,
        x radius=1cm,
        y radius =1cm] ;

 \draw[blue]  (0,0) -- (3.1,-1.7); 
  \draw[blue]  (-0.16,-0.3) -- (2.95,-2.1); 
    \draw[blue]  (6,-0.025) -- (3.1,-1.7); 
   \draw[blue]  (6.2,-0.3) --(3.3,-2.1) ;  
      \draw[blue]  (3.3,-2.1) -- (3.35,-5);  
      \draw[blue]  (2.95,-2.1) -- (3,-5);  
  
   \fill (-1,0.8)  node[anchor=north] {$B_1$};
     \fill (7,0.8)  node[anchor=north] {$B_2$};
      \fill (3.2,-5.5)  node[anchor=north] {$B_3$};
        \fill (-1,-0.7)  node[anchor=north] {\color{red}$B_\d(x_0)$};
      \fill (8,-2.8)  node[anchor=north] {The domain $\Om_{\d}$};
 \end{tikzpicture} 
\vskip0.2cm 

The construction of the solutions $u_\d$ in $\Omega_\d$ will be given in the following steps:
\vskip0.2cm
Step 1. {\em Existence and nondegeneracy  of the solution $u_\d$ in $\Om_{\d}$.}
\vskip0.2cm
Step 2. {\em $||u_\d||_{L^\infty(\Omega)}\le C_0$ with $C$ independent of $\d$ and $u_\d\to u_\l$ uniformly outside of compact sets containing $B_\d(x_0)$.}
\vskip0.2cm
Step 3. {\em Uniqueness of the critical point of $u_{\d}$ near the hole.}
\vskip0.2cm
{\bf Proof of Step 1.}  
By Dancer's results in \cite{da1,da2} and the remarks therein, using the nondegeneracy of $u_\l$ we  get that, for $\d$ small enough, there exists a family of solutions $u_\d$ in $\Om_{\d}$ such that $u_\d\to u_\l$ in $L^p(\Om)$ for every $p>1$. Moreover, again by Theorem $1$ in \cite{da1}, the nondegeneracy of $ u_\l$ implies that of $u_\d$, again for $\d$ small.

\vskip0.2cm
{\bf Proof of Step 2.}  
By the standard regularity theory we have that $u_\d\to u_\l$ in any compact set outside of $B_\d(x_0)$. Hence it is enough to prove our claim in $B_R(x_0)$. Again using the standard regularity theory we get
 $u_\d\to u_\l$ in $C^1(\partial B_R(x_0))$. In particular $u_\d\le C_R$ is uniformly bounded on $\partial B_R(x_0)$, where $C_R$ is a positive constant depending only on $R$.
 
We can choose $\delta$ so small that, on $\partial B_R(x_0)$,
\[u_\delta (x)<2 u_\l(x)<4 \sum_{i=1}^m 8\pi G(x,P_i)\]
and
\[\left| \nabla u_\delta\right|\leq 2 \left| \nabla u_\lambda\right|<4 \left|\nabla  \sum_{i=1}^m 8\pi G(x,P_i)\right|\]
by the previous assumptions on $\lambda$. Now we apply the Pohozaev identity to $u_\d$ in $B_R(x_0)\setminus B_\d(x_0)$, (see \cite{po}) with $F(u)=\lambda\left(e^u-1\right)$. 
Then we get
\[
\begin{split}
&2\lambda \int _{B_R(x_0)\setminus B_\d(x_0)}\left( e^{u(x)}-1\right) dx=\\
&\int_{\partial B_R(x_0)} \left(\left[(x-x_0)\cdot \nabla u_\d \right] \frac{\partial u_\d}{\partial \nu} -(x-x_0)\cdot \nu \frac {|\nabla u_\d|^2}2+\lambda (x-x_0)\cdot \nu (e^{u_\d}-1)\right)
d\sigma\\
&\int_{\partial B_\d(x_0)} \left(\left[(x-x_0)\cdot \nabla u_\d \right] \frac{\partial u_\d}{\partial \nu} -(x-x_0)\cdot \nu \frac {|\nabla u_\d|^2}2\right)
d\sigma
\end{split}\]
where $\nu$ is the outer normal.  Since $u_\d=0$ on $\partial B_\d(x_0)$ and $B_\d(x_0)$ is starshaped then
\[\int_{\partial B_\d(x_0)} \left(\left[(x-x_0)\cdot \nabla u_\d \right] \frac{\partial u_\d}{\partial \nu} -(x-x_0)\cdot \nu \frac {|\nabla u_\d|^2}2\right)
d\sigma=\frac 12 \int_{\partial B_\d(x_0)} (x-x_0)\cdot \nu |\nabla u_\d|^2 d\sigma\leq 0.\]
Then the previous equality becomes
\[\begin{split}
&2\lambda \int _{B_R(x_0)\setminus B_\d(x_0)}e^{u(x)}dx\leq 2\lambda |B_R(x_0)|+\\
&\int_{\partial B_R(x_0)} \left(\left[(x-x_0)\cdot \nabla u_\d \right] \frac{\partial u_\d}{\partial \nu} -(x-x_0)\cdot \nu \frac {|\nabla u_\d|^2}2+\lambda (x-x_0)\cdot \nu (e^{u_\d}-1)\right)
d\sigma\\
&\leq 2\lambda^*(\Omega) \pi R^2+\int_{\partial B_R(x_0)} \left(\frac 32 |x-x_0||\nabla u_\d|^2+\lambda^*(\Omega)|x-x_0|\left(e^{u_\d}+1\right)\right)d\sigma\\
&\leq 2\lambda^*(\Omega) \pi R^2+ R \int_{\partial B_R(x_0)}\left(24\left| \nabla 8\pi \sum_{i=1}^m G(x,P_i)\right|^2+\lambda^*(\Omega)\left(e^{4\sum_{i=1}^m 8\pi G(x,P_i)}+1\right)\right)d\sigma
\end{split}\]
so that, by the assumption on $R$ we have 
\[\lambda \int _{B_R(x_0)\setminus B_\d(x_0)}e^{u(x)}dx\leq \frac {\e_0} 4.\]
Hence Lemma \ref{lemma-piccolezza} implies that 
\[\| u_\d\|_{L^{\infty}(B_R(x_0))}\leq C\]
where $C$ is independent on $\d$ and on $\lambda$.
\begin{proof}[\bf Proof of Step 3 and Theorem \ref{ex}]
Once we have the existence of a solution $u_{\d}$  to \eqref{1} in $\Om_{\d}$ which is uniformly bounded in $L^\infty(\Omega)$ we can use Theorem \ref{th1-1} with $v_\d=u_{\d}$ and $v_0=u_\l$.  
So the solution  $u_{\delta}$ in $\Om_\d$ has exactly one more saddle point of index $-1$ than $u_\l$ and this gives the claim of Theorem \ref{ex} when $k=1$ and $\l$ is fixed. 

We end the proof showing that the previous procedure can be iterated to handle the case $k\ge1$.

We fix the previous domain $\Om_\d$ with one hole and remove another ball $B_{\d_1}(x_1)\subset\Omega\setminus\{O,P_1,\dots,P_m\}$ and such that $B_{\d_1}(x_1)\cap B_\d(x_0)=\emptyset$. 
Proceding as in the case of $k=1$ we construct
a solution  $u_{\d_1}$ in a domain with $2$ holes which has one more saddle point of index $-1$ with respect to $u_\delta$, and $two$ more saddle points with respect to $\bar u_\l$. The procedure can be iterated removing $k$ balls $B_{\delta_i} (x_i)$ obtaining a solution $u:=u_\l$ in a domain with $k$ holes that has $k$ saddle points more than $\bar u_\l$.

Since this construction can be done for every fixed lambda, we can produce a sequence of domains $\Omega_n$ with $k\ge 1$ holes, and solutions $u_n:=u_{\lambda_n}$ such that $\lambda_n\to 0$ and $u_n$ has at least $2m+k+1$ critical points in $\Omega_n$, $m+1$ of them are local maxima and $m+k$ are saddle points.

Finally, by construction, the sequence $u_n$ blows-up at $\{P_1,\dots, P_m\}$ as $n\to+\infty$.
\end{proof}

\bigskip
\footnotesize
\noindent\textit{Acknowledgments.}
This work has been developed within the framework of the project e.INS- Ecosystem of Innovation for
Next Generation Sardinia (cod. ECS 00000038) funded by the Italian Ministry for Research and Education
(MUR) under the National Recovery and Resilience Plan (NRRP) - MISSION 4 COMPONENT 2, "From
research to business" INVESTMENT 1.5, "Creation and strengthening of Ecosystems of innovation" and
construction of "Territorial R\&D Leaders".
The first author is funded by Fondazione di Sardegna, Uniss,  annual fund installment 2017 and 2020 and by CUP J55F21004240001. The two authors are partially funded by Gruppo Nazionale per l’Analisi Matematica, la Probabilità e le loro Applicazioni (GNAMPA) of the Istituto Nazionale di Alta Matematica (INdAM).

\bibliography{max-fra-gelfand3.bib}

\begin{thebibliography}{10}

\bibitem{app}
A.~Acker, L.~E. Payne, and G.~Philippin.
\newblock On the convexity of level lines of the fundamental mode in the
  clamped membrane problem, and the existence of convex solutions in a related
  free boundary problem.
\newblock {\em Z. Angew. Math. Phys.}, 32(6):683--694, 1981.

\bibitem{am}
G.~Alessandrini and R.~Magnanini.
\newblock The index of isolated critical points and solutions of elliptic
  equations in the plane.
\newblock {\em Ann. Scuola Norm. Sup. Pisa Cl. Sci. (4)}, 19(4):567--589, 1992.

\bibitem{bapa}
S.~Baraket and F.~Pacard.
\newblock Construction of singular limits for a semilinear elliptic equation in
  dimension {$2$}.
\newblock {\em Calc. Var. Partial Differential Equations}, 6(1):1--38, 1998.

\bibitem{bl}
H.~J. Brascamp and E.~H. Lieb.
\newblock On extensions of the {B}runn-{M}inkowski and {P}r\'{e}kopa-{L}eindler
  theorems, including inequalities for log concave functions, and with an
  application to the diffusion equation.
\newblock {\em J. Functional Analysis}, 22(4):366--389, 1976.

\bibitem{bremer}
H.~Brezis and F.~Merle.
\newblock Uniform estimates and blow-up behavior for solutions of {$-\Delta
  u=V(x)e^u$} in two dimensions.
\newblock {\em Comm. Partial Differential Equations}, 16(8-9):1223--1253, 1991.

\bibitem{cf}
L.~A. Caffarelli and A.~Friedman.
\newblock Asymptotic estimates for the plasma problem.
\newblock {\em Duke Math. J.}, 47(3):705--742, 1980.

\bibitem{cf2}
L.~A. Caffarelli and A.~Friedman.
\newblock Convexity of solutions of semilinear elliptic equations.
\newblock {\em Duke Math. J.}, 52(2):431--456, 1985.

\bibitem{clmp1}
E.~Caglioti, P.-L. Lions, C.~Marchioro, and M.~Pulvirenti.
\newblock A special class of stationary flows for two-dimensional {E}uler
  equations: a statistical mechanics description.
\newblock {\em Comm. Math. Phys.}, 143(3):501--525, 1992.

\bibitem{cli2}
C.~C. Chen and C.-S. Lin.
\newblock On the symmetry of blowup solutions to a mean field equation.
\newblock {\em Ann. Inst. H. Poincar\'{e} C Anal. Non Lin\'{e}aire},
  18(3):271--296, 2001.

\bibitem{cli}
C.-C. Chen and C.-S. Lin.
\newblock Topological degree for a mean field equation on {R}iemann surfaces.
\newblock {\em Comm. Pure Appl. Math.}, 56(12):1667--1727, 2003.

\bibitem{cr2}
M.~G. Crandall and P.~H. Rabinowitz.
\newblock Bifurcation, perturbation of simple eigenvalues and linearized
  stability.
\newblock {\em Arch. Rational Mech. Anal.}, 52:161--180, 1973.

\bibitem{da1}
E.~N. Dancer.
\newblock The effect of domain shape on the number of positive solutions of
  certain nonlinear equations.
\newblock {\em J. Differential Equations}, 74(1):120--156, 1988.

\bibitem{da2}
E.~N. Dancer.
\newblock On the number of positive solutions of some weakly nonlinear
  equations on annular regions.
\newblock {\em Math. Z.}, 206(4):551--562, 1991.

\bibitem{dgm}
F.~De~Regibus, M.~Grossi, and D.~Mukherjee.
\newblock Uniqueness of the critical point for semi-stable solutions in {$\Bbb
  R^2$}.
\newblock {\em Calc. Var. Partial Differential Equations}, 60(1):Paper No. 25,
  13, 2021.

\bibitem{dem}
M.~del Pino, P.~Esposito, and M.~Musso.
\newblock Two-dimensional {E}uler flows with concentrated vorticities.
\newblock {\em Trans. Amer. Math. Soc.}, 362(12):6381--6395, 2010.

\bibitem{dkm}
M.~del Pino, M.~Kowalczyk, and M.~Musso.
\newblock Singular limits in {L}iouville-type equations.
\newblock {\em Calc. Var. Partial Differential Equations}, 24(1):47--81, 2005.

\bibitem{djlw}
W.~Ding, J.~Jost, J.~Li, and G.~Wang.
\newblock Existence results for mean field equations.
\newblock {\em Ann. Inst. H. Poincar\'{e} C Anal. Non Lin\'{e}aire},
  16(5):653--666, 1999.

\bibitem{egp}
P.~Esposito, M.~Grossi, and A.~Pistoia.
\newblock On the existence of blowing-up solutions for a mean field equation.
\newblock {\em Ann. Inst. H. Poincar\'{e} Anal. Non Lin\'{e}aire},
  22(2):227--257, 2005.

\bibitem{ge9}
I.~M. Gelfand.
\newblock Some problems in the theory of quasilinear equations.
\newblock {\em Amer. Math. Soc. Transl. (2)}, 29:295--381, 1963.

\bibitem{gnn}
B.~Gidas, W.~M. Ni, and L.~Nirenberg.
\newblock Symmetry and related properties via the maximum principle.
\newblock {\em Comm. Math. Phys.}, 68(3):209--243, 1979.

\bibitem{gg1}
F.~Gladiali and M.~Grossi.
\newblock Some results for the {G}elfand's problem.
\newblock {\em Comm. Partial Differential Equations}, 29(9-10):1335--1364,
  2004.

\bibitem{grlu}
M.~Grossi and P.~Luo.
\newblock Critical points of positive solutions of nonlinear elliptic
  equations: multiplicity, location and non-degeneracy.
\newblock {\em Indiana Univ. Math. J.}, 72(2):821--871, 2023.

\bibitem{gm}
M.~Grossi and R.~Molle.
\newblock On the shape of the solutions of some semilinear elliptic problems.
\newblock {\em Commun. Contemp. Math.}, 5(1):85--99, 2003.

\bibitem{grohsu}
M.~Grossi, H.~Ohtsuka, and T.~Suzuki.
\newblock Asymptotic non-degeneracy of the multiple blow-up solutions to the
  {G}el'fand problem in two space dimensions.
\newblock {\em Adv. Differential Equations}, 16(1-2):145--164, 2011.

\bibitem{hns}
F.~Hamel, N.~Nadirashvili, and Y.~Sire.
\newblock Convexity of level sets for elliptic problems in convex domains or
  convex rings: two counterexamples.
\newblock {\em Amer. J. Math.}, 138(2):499--527, 2016.

\bibitem{k}
B.~Kawohl.
\newblock {\em Rearrangements and convexity of level sets in {PDE}}, volume
  1150 of {\em Lecture Notes in Mathematics}.
\newblock Springer-Verlag, Berlin, 1985.

\bibitem{liy}
Y.~Y. Li.
\newblock Harnack type inequality: the method of moving planes.
\newblock {\em Comm. Math. Phys.}, 200(2):421--444, 1999.

\bibitem{mw2}
L.~Ma and J.~C. Wei.
\newblock Convergence for a {L}iouville equation.
\newblock {\em Comment. Math. Helv.}, 76(3):506--514, 2001.

\bibitem{ml}
L.~G. Makar-Limanov.
\newblock The solution of the {D}irichlet problem for the equation {$\Delta
  u=-1$}\ in a convex region.
\newblock {\em Mat. Zametki}, 9:89--92, 1971.

\bibitem{mo}
J.~L. Moseley.
\newblock Asymptotic solutions for a {D}irichlet problem with an exponential
  nonlinearity.
\newblock {\em SIAM J. Math. Anal.}, 14(4):719--735, 1983.

\bibitem{mu1}
J.~D. Murray.
\newblock {\em Mathematical biology. {I}}, volume~17 of {\em Interdisciplinary
  Applied Mathematics}.
\newblock Springer-Verlag, New York, third edition, 2002.
\newblock An introduction.

\bibitem{mu2}
J.~D. Murray.
\newblock {\em Mathematical biology. {II}}, volume~18 of {\em Interdisciplinary
  Applied Mathematics}.
\newblock Springer-Verlag, New York, third edition, 2003.
\newblock Spatial models and biomedical applications.

\bibitem{nasu}
K.~Nagasaki and T.~Suzuki.
\newblock Asymptotic analysis for two-dimensional elliptic eigenvalue problems
  with exponentially dominated nonlinearities.
\newblock {\em Asymptotic Anal.}, 3(2):173--188, 1990.

\bibitem{po}
S.~I. Poho\v{z}aev.
\newblock On the eigenfunctions of the equation {$\Delta u+\lambda f(u)=0$}.
\newblock {\em Dokl. Akad. Nauk SSSR}, 165:36--39, 1965.

\bibitem{su3}
T.~Suzuki.
\newblock Two-dimensional {E}mden-{F}owler equation with exponential
  nonlinearity.
\newblock In {\em Nonlinear diffusion equations and their equilibrium states, 3
  ({G}regynog, 1989)}, volume~7 of {\em Progr. Nonlinear Differential Equations
  Appl.}, pages 493--512. Birkh\"{a}user Boston, Boston, MA, 1992.

\bibitem{su2}
T.~Suzuki.
\newblock {\em Mean field theories and dual variation}, volume~2 of {\em
  Atlantis Studies in Mathematics for Engineering and Science}.
\newblock Atlantis Press, Paris; World Scientific Publishing Co. Pte. Ltd.,
  Hackensack, NJ, 2008.

\bibitem{we}
V.~H. Weston.
\newblock On the asymptotic solution of a partial differential equation with an
  exponential nonlinearity.
\newblock {\em SIAM J. Math. Anal.}, 9(6):1030--1053, 1978.

\end{thebibliography}
\bibliographystyle{abbrv}

\end{document}